\documentclass[12pt]{article}
\usepackage[titletoc,title]{appendix}
\usepackage{amsmath,amsfonts,amssymb,bm,amsthm}
\usepackage[colorlinks,citecolor=blue,linkcolor=blue]{hyperref}
\usepackage{tikz}
\usetikzlibrary{shapes,arrows}
\usetikzlibrary{intersections,shapes.arrows}
\usetikzlibrary{decorations.pathreplacing}
\usetikzlibrary{arrows.meta}
\numberwithin{equation}{section}
\newtheorem{theorem}{Theorem}[section]
\newtheorem{lemma}[theorem]{Lemma}
\newtheorem{proposition}[theorem]{Proposition}
\newtheorem{corollary}[theorem]{Corollary}
\theoremstyle{definition}
\newtheorem{example}[theorem]{Example}
\newtheorem{definition}[theorem]{Definition}

\theoremstyle{remark}
\newtheorem{remark}[theorem]{Remark}

%%%% remove this if you don't like sans serif
%\usepackage{ebgaramond}
\usepackage[T1]{fontenc}
%\renewcommand{\familydefault}{cmss}
%%%%
%%%%%%%%%%%%%%%%%%%%%%%%%%%%%%%%%%
\usepackage[top=1in, bottom=1in, left=1in, right=1in]{geometry}
\setlength{\headsep}{1cm}
\setlength{\footskip}{1cm}
%\textwidth15cm
%%%%%%%%%%%%%%%%%%%%%%%%%%%%%%%%%%
\usepackage{fancyhdr}
\pagestyle{myheadings}
\setlength{\headheight}{15pt}

\pagestyle{fancy}
\fancyhf{}
%\fancyhead[FRE,FLO]{}%week number in weekly handouts
\lhead{{\bfseries Matteo Capoferri} and {\bfseries Dmitri Vassiliev}}
\rhead{{\bfseries Page \thepage}}
\rfoot{{\bfseries Invariant subspaces of elliptic systems II: spectral theory}}
%%%%%%%%%%%%%%%%%%%%%%%%%%%%%%%%%%
%\newcommand{\dbar}{{\mkern3mu\mathchar'26\mkern-12mu \mathrm{d}}}
%\newcommand{\ir}{\mathrm{i}}
%\newcommand{\er}{\mathrm{e}}
%\newcommand{\dr}{\mathrm{d}}

%%%%%%%%%%%%%%%%%%%%%%%%%%%%%%%%%%
\usepackage{enumerate}
%%%%%%%%%%%%%%%%%%%%%%%%%%%%%%%%%%
\usepackage{xcolor,cancel}
\usepackage{relsize}%for resizing mathematical symbols

%%%%%%%%%%%%%%%%%%%%%%%%%%%%%%
%when checking references, uncomment
%\usepackage{refcheck}  
%%%%%%%%%%%%%%%%%%%%%%%%%%%%%%

\renewcommand{\tilde}{\widetilde}

\allowdisplaybreaks

\begin{document}

\title{Invariant subspaces of elliptic systems II:\\ 
spectral theory}
\author{Matteo Capoferri\thanks{MC:
School of Mathematics,
Cardiff University,
Senghennydd Rd,
Cardiff CF24~4AG,
UK;\newline
CapoferriM@cardiff.ac.uk,
\url{http://mcapoferri.com}.
}
\and
Dmitri Vassiliev\thanks{DV:
Department of Mathematics,
University College London,
Gower Street,
London WC1E~6BT,
UK;
D.Vassiliev@ucl.ac.uk,
\url{http://www.ucl.ac.uk/\~ucahdva/}.
}}

%%%% this removes footnote "crosses" by the authors' names
\renewcommand\footnotemark{}
%%%%

%\date{version 72; 3 February 2022; \LaTeX{ed} \today}
%\date{26 March 2021}
\date{
\vspace{1mm}
\it \small In memory of Misha Shubin\\ 
whose support was invaluable to the second author at the beginning of his career}

\maketitle

\vspace{-.6cm}

\begin{abstract}
Consider an elliptic self-adjoint pseudodifferential operator $A$ acting on $m$-columns of half-densities on a closed manifold $M$, whose principal symbol is assumed to have simple eigen\-values.
We show that the spectrum of $A$ decomposes, up to an error with super\-polynomial decay, into $m$ distinct series, each associated with one of the eigen\-values of the principal symbol of $A$. These spectral results are then applied to the study of propagation of singularities in hyperbolic systems. 
The key technical ingredient is the use of the carefully devised pseudodifferential projections introduced in the first part of this work, which decompose $L^2(M)$ into almost-orthogonal almost-invariant subspaces under the action of both $A$ and the hyperbolic evolution.

\

{\bf Keywords:} pseudodifferential projections, elliptic systems, hyperbolic systems, invariant subspaces, spectral asymptotics, pseudodifferential operators on manifolds.

\

{\bf 2020 MSC classes: }
primary 
35P20; %Asymptotic distributions of eigenvalues in context of PDEs
secondary
47A15,
%Invariant subspaces of linear operators
35J46,
%First-order elliptic systems
35J47,
%Second-order elliptic systems
35J48,
%Higher-order elliptic systems
58J05,
%Elliptic equations on manifolds, general theory
58J40,
%Pseudodifferential and Fourier integral operators on manifolds
58J45. 
% Hyperbolic equations on manifolds

\end{abstract}

\tableofcontents

\allowdisplaybreaks

\section{Statement of the problem}
\label{Statement of the problem}

In this paper we continue the analysis of invariant subspaces of elliptic systems initiated in \cite{part1}, focussing on the spectral-theoretic aspects of the problem.

Let $M$ be a connected closed manifold of dimension $d \ge 2$\footnote{When $d=1$ the punctured cotangent bundle $T^*M\setminus\{0\}$ is not connected.  Although this is not a fundamental obstacle, we assume the dimension of the manifold to be at least 2 to avoid repeated discussions on the difference between $d=1$ and $d\ge2$.}.  We denote by $(x^1, \ldots, x^d)$ local coordinates on $M$.

As in \cite{part1}, we denote by $C^\infty(M)$ the linear space of $m$-columns of smooth complex-valued half-densities over $M$ and by $L^2(M)$ its closure with respect to the inner product
\begin{equation*}
\label{inner product on m-columns of half-densities}
\langle v,w\rangle:=\int_M v^*w\,dx\,,
\end{equation*}
where $dx:=dx^1\dots dx^d$.  Accordingly, we denote by $H^s(M)$, $s\in \mathbb{R}$, the corresponding Sobolev spaces. Here and further on $*$ stands for Hermitian conjugation when applied to matrices and for adjunction when applied to operators.

Let $\Psi^s$ be the space of classical pseudodifferential operators of order $s$ acting from $H^s(M)$ to $L^2(M)$. For an operator $B\in \Psi^s$ we denote by $B_\mathrm{prin}$ and by $B_\mathrm{sub}$ its principal and subprincipal symbols, respectively.

Let $A\in\Psi^s$, $s\in\mathbb{R}$, $s>0$, be an elliptic self-adjoint linear operator,
where ellipticity means that
\begin{equation*}
\label{definition of ellipticity}
\det A_\mathrm{prin}(x,\xi)\ne0,\qquad\forall(x,\xi)\in T^*M\setminus\{0\}.
\end{equation*}

Throughout this paper we assume that the eigenvalues of $A_\mathrm{prin}$ are simple. We denote by $m^+$ (resp.~$m^-$) the number of positive (resp.~negative) eigenvalues of
$A_\mathrm{prin}(x,\xi)$.
We denote by $h^{(j)}(x,\xi)$
the eigenvalues of $A_\mathrm{prin}(x,\xi)$ and by $P^{(j)}(x,\xi)$ the corresponding eigenprojections. Eigenvalues are enumerated in increasing order, with positive index $j=1,2,\ldots, m^+$ for positive $h^{(j)}(x,\xi)$ and negative index $j=-1,-2,\ldots, -m^-$ for negative $h^{(j)}(x,\xi)$.

The spectrum of our operator $A:H^s(M)\to L^2(M)$ is discrete and accumulates to infinity.
More precisely,
if $m^+\ge1$ the spectrum accumulates to $+\infty$,
if $m^-\ge1$ the spectrum accumulates to $-\infty$, 
and if $m^+\ge1$ and $m^-\ge1$ the spectrum accumulates to~$\pm\infty$.

Let us recall a few results from \cite{part1} which will be useful later on.

\begin{definition}
\label{definition of sign definiteness modulo}
We say that a symmetric pseudodifferential operator $B$ is \emph{nonnegative (resp.~nonpositive) modulo} $\Psi^{-\infty}$
and write
\[
B\ge0\mod\Psi^{-\infty}\qquad(\text{resp.}\ B\le0\mod\Psi^{-\infty})
\]
if there exists a symmetric operator $C\in\Psi^{-\infty}$
such that $B+C\ge0$ (resp.~$B+C\le0$).
\end{definition}

\begin{theorem}
\label{theorem results from part 1}
Let $A$ be as above and let $\delta$ be the Kronecker symbol.

\begin{enumerate}[(a)]

\item 
\cite[Theorem~2.2]{part1} There exist $m$ pseudodifferential operators $P_j\in \Psi^0$ satisfying
\begin{enumerate}[(i)]
\item 
$(P_j)_\mathrm{prin}=P^{(j)}$,

\item
$P_j=P_j^* \mod \Psi^{-\infty}$,

\item
$P_j P_l =\delta_{jl}\, P_j \mod \Psi^{-\infty}$,

\item
$\sum_j P_j=\mathrm{Id} \mod \Psi^{-\infty}$,

\item
$[A,P_j]=0 \mod \Psi^{-\infty}$.
\end{enumerate}
These operators are uniquely determined, modulo $\Psi^{-\infty}$, by $A$.

\item
\cite[Theorem~2.5]{part1}
We have
\begin{align*}
%\label{Main results theorem 3 equation 1}
&P_j^*AP_j\ge0\mod\Psi^{-\infty}\quad\text{for}\quad j=1,\dots,m^+,
\\
%\label{Main results theorem 3 equation 2}
&P_j^*AP_j\le0\mod\Psi^{-\infty}\quad\text{for}\quad j=-1,\dots,-m^-.
\end{align*}
\end{enumerate}
\end{theorem}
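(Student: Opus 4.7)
The plan is to prove (a) and (b) separately, both by working at the level of full symbols and constructing the relevant operators inductively, order by order.

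For part (a), I would build each $P_j$ as an asymptotic sum $P_j\sim\sum_{k\ge0}(P_j)_{-k}$ with leading term $(P_j)_0=P^{(j)}$ and then determine the lower-order symbols so that properties (i)--(v) hold modulo progressively smoother remainders. The governing equation at each step is obtained by expanding $[A,P_j]$ via the composition formula: the obstruction at order $s-k-1$ takes the form $[A_{\mathrm{prin}},(P_j)_{-k}]=F_k$, where $F_k$ depends only on $(P_j)_{0},\dots,(P_j)_{-k+1}$ already constructed. Because the eigenvalues $h^{(j)}(x,\xi)$ are simple, the linear map $X\mapsto [A_{\mathrm{prin}},X]$ on $m\times m$ matrices has kernel equal to the span of the eigenprojections $P^{(l)}$, and its range is the complementary off-diagonal subspace. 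Therefore the off-diagonal components of $(P_j)_{-k}$ in the $\{P^{(l)}\}$ decomposition are uniquely determined by $F_k$, while the diagonal components are still free and can then be fixed by imposing the remaining constraints: self-adjointness (ii), the projector/orthogonality relations (iii), and the partition-of-unity (iv). One has to check that these three requirements are compatible at each order; this is an exercise in linear algebra using the symmetry properties of $A_{\mathrm{prin}}$ and the fact that the $P^{(l)}$ are mutually orthogonal self-adjoint rank-one projections. Uniqueness modulo $\Psi^{-\infty}$ follows from the same inductive scheme applied to the difference of two candidate families.

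For part (b), the principal symbol of $P_j^*AP_j$ is $(P^{(j)})^*A_{\mathrm{prin}}P^{(j)}=h^{(j)}P^{(j)}$, which is nonnegative when $j>0$ and nonpositive when $j<0$. To upgrade this pointwise sign to an operator inequality modulo $\Psi^{-\infty}$, I would construct a symmetric pseudodifferential square root. For $j\in\{1,\dots,m^+\}$, I would produce $Q_j\in\Psi^{s/2}$ with $(Q_j)_{\mathrm{prin}}=\sqrt{h^{(j)}}\,P^{(j)}$ such that $Q_j^*Q_j=P_j^*AP_j\mod\Psi^{-\infty}$, building the subprincipal and lower-order terms of $Q_j$ inductively by the same kind of cohomological argument as in (a) — at each step one solves a linear equation whose solvability again reduces to the simple-eigenvalue structure of $A_{\mathrm{prin}}$. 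Once $Q_j$ is constructed, the identity $P_j^*AP_j=Q_j^*Q_j\mod\Psi^{-\infty}$ makes nonnegativity manifest. The case $j\in\{-1,\dots,-m^-\}$ is symmetric: one builds $Q_j$ with principal symbol $\sqrt{-h^{(j)}}\,P^{(j)}$ and obtains $P_j^*AP_j=-Q_j^*Q_j\mod\Psi^{-\infty}$.

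The main obstacle, I expect, is simultaneously meeting all of the algebraic constraints (ii)--(v) at every order in the iteration for (a), which requires verifying that the candidate diagonal corrections prescribed by one constraint do not violate another. Concretely, one must show at each step that the freedom to adjust diagonal entries of $(P_j)_{-k}$ is sufficient to symmetrise $P_j$, enforce the idempotence/orthogonality relations, and close up the partition-of-unity simultaneously; this is what forces the careful choice of subprincipal symbol. For (b), the analogous obstacle lies in showing the square-root equation $Q_j^*Q_j=P_j^*AP_j$ is solvable at every order, which in turn relies on the subprincipal structure secured in (a). Both hurdles are essentially linear-algebraic consequences of the simple-eigenvalue assumption, but they require careful bookkeeping with respect to the block decomposition induced by the spectral projections $P^{(l)}$.
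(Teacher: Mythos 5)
This statement is not proved in the present paper: parts (a) and (b) are quoted, with explicit citations, from the companion paper \cite{part1} (Theorems~2.2 and~2.5 there). There is therefore no local proof to compare your sketch against; what follows is an assessment of the sketch on its own terms. Your high-level strategy --- an inductive, order-by-order determination of the lower-order symbols, exploiting simplicity of the eigenvalues of $A_{\mathrm{prin}}$ and the spectral block decomposition induced by the $P^{(l)}$ --- is the right general idea, but the points you set aside as ``bookkeeping'' are exactly where the content of the proof lives.

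Two concrete gaps. First, in part (a) you treat the diagonal blocks $P^{(a)}(P_j)_{-k}P^{(a)}$ as free parameters to be fixed later by (ii)--(iv). In fact the idempotency relation (iii) with $j=l$, expanded via the composition calculus, already forces every diagonal block: the leading contribution gives
\begin{equation*}
P^{(j)}(P_j)_{-k}+(P_j)_{-k}P^{(j)}=(P_j)_{-k}+(\text{lower-order data}),
\end{equation*}
which pins $P^{(j)}(P_j)_{-k}P^{(j)}$ and forces $P^{(a)}(P_j)_{-k}P^{(a)}$ for $a\ne j$ to equal a prescribed lower-order expression. Meanwhile (v) fixes the off-diagonal blocks. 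The system (ii)--(v) is therefore over-determined at each order, and one must \emph{prove} that the several prescriptions agree; you flag this as ``the main obstacle'' but do not indicate why the compatibility identities hold, which is precisely the theorem.

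Second, in part (b) the proposed square root $Q_j$ has rank-one, non-elliptic principal symbol $\sqrt{h^{(j)}}\,P^{(j)}$, so the order-$(s-1-k)$ equation $(Q_j)_0^*(Q_j)_{-k}+(Q_j)_{-k}^*(Q_j)_0=G_k$ is not freely solvable: decomposing into blocks, the components $P^{(a)}G_kP^{(b)}$ with $a,b\ne j$ receive no contribution from the left-hand side, so solvability requires $P^{(a)}G_kP^{(b)}=0$ for all $a,b\ne j$. This is a genuine constraint on the lower-order symbols of $P_j^*AP_j$, presumably a consequence of the structure secured in (a), but it must be established rather than assumed. Until that is done, the claim that $P_j^*AP_j=Q_j^*Q_j\bmod\Psi^{-\infty}$ ``makes nonnegativity manifest'' is not yet a proof. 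Also note that the $P^{(j)}$ are rank one only when the eigenvalues of $A_{\mathrm{prin}}$ are simple, which is assumed; if you intend your argument to generalise, the rank-one simplifications you invoke would need to be revisited.
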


Theorem~\ref{theorem results from part 1} tells us that, given an elliptic self-adjoint operator $A\in \Psi^s$, one can construct a unique orthonormal basis of pseudodifferential projections commuting with $A$. These projections partition $L^2(M)$ into $m$ invariant subspaces under the action of $A$, modulo $C^\infty(M)$. Furthermore, they allow one to decompose $A$ into precisely $m$ (non-elliptic) sign definite operators $P_j^*AP_j \in \Psi^s$.

In the light of the above results, one would be led to think that the decomposition
\begin{equation*}
\label{decomposition of A in terms of the Pj}
A=\sum_{j}P_j^*AP_j \mod \Psi^{-\infty}
\end{equation*}
could be used, somehow, to obtain a similar decomposition at the level of the spectrum of $A$, at least in the limit $|\lambda| \to +\infty$. It is well-known that, asymptotically, positive eigenvalues of $A_\mathrm{prin}$ account for the positive spectrum of $A$ and negative eigenvalues of $A_\mathrm{prin}$ for the negative spectrum of $A$. One's hope would be to use pseudodifferential projections to achieve a finer partition of the spectrum of $A$ into $m$ distinct families, singling out the contribution of each individual eigenvalue of $A_\mathrm{prin}$.

Now, the naive approach of looking at the spectra of the operators $P_j^*AP_j$ does not look very promising, in that the latter are \emph{not} elliptic, hence the standard spectral-theoretic and asymptotic techniques cannot be applied. If one is to succeed in achieving the above spectral decomposition without abandoning completely the realm of elliptic operators, a more clever strategy is needed.

\begin{remark}
There are at least two other rather natural approaches to the problem at hand. 
\begin{enumerate}
\item The first approach would involve working with negative order operators.  For fixed $\lambda$ in the resolvent set of $A$, one can consider the negative order operators
\[
R_j:=P_j^*(A-\lambda \operatorname{Id})^{-1}P_j, \qquad j=-m^{-},\ldots, -1,1,\ldots,m^+,
\] 
and study the asymptotics of their counting functions as the spectral parameter tends to zero, in the spirit of \cite{birman1, birman2}. %symptotic formulae for the counting function of negative order pseudodifferential operators also appear in \cite[Section~11.8]{Ivr19}, albeit without full proofs and for nondegenerate operators only. 
We decided not to pursue this approach, which presents nontrivial technical obstacles, but to develop a novel strategy instead. The latter will have the advantage of allowing us to obtain our results without the need to work with non-elliptic operators.

\item
The second approach would involve microlocally diagonalizing the operator $A$, i.e.~constructing an almost-unitary operator $B$ such that $B^*AB$ is a diagonal matrix operator, modulo $\Psi^{-\infty}$ --- see, for example, \cite{taylor_diag,cordes,LF91,BR99,NS04,PST03}. In the context of Dirac operators such a $B$ is sometimes referred to as the Foldy--Wouthuysen Transform.
In constructing the operator $B$ one encounters the issue that the almost-unitary operator $B$ is not defined uniquely, not even at the level of the principal symbol,  but only up to gauge transformations (see also \cite[Sec.~5]{CDV}).  
Neglecting to account for these gauge transformations and the curvatures that they bring about has led to mistakes in some publications, see \cite[Sec.~11]{CDV}. Furthermore, there may be topological obstructions to the existence of a global diagonalization: the issue here is that it is not always possible to adjust the gauge so that the eigenvectors of the principal symbol become (globally defined) smooth $m$-columns on $T^*M\setminus\{0\}$. Finally, the diagonalization procedure affects the asymptotics of the local counting function,  in that it involves conjugation by a pseudodifferential operator. This makes diagonalization, in a sense, a less natural approach when it comes to studying the spectral properties of $A$.  We refer the reader to \cite{diagonalization} for a detailed analysis of the almost-unitary operator $B$.
The use of pseudodifferential projections has the advantage of circumventing these issues altogether. 
\end{enumerate}

\end{remark}

All in all, the results from Theorem~\ref{theorem results from part 1} warrant the following natural questions.

\

\textbf{Question 1\ }
Can we exploit the pseudodifferential projections $P_j$ to achieve a partition of the spectrum of $A$ into $m$ disjoint families of eigenvalues?

\

\textbf{Question 2\ }
Can we exploit the pseudodifferential projections $P_j$ to advance the current understanding of spectral asymptotics for elliptic systems?

\

\textbf{Question 3\ }
Can we exploit the pseudodifferential projections $P_j$ to advance the current understanding of propagation of singularities for hyperbolic systems?

\

The goal of this paper is to provide a rigorous affirmative answer to Questions 1, 2 and~3.

\section{Main results}
\label{Main results}

Our main results can be summarised in the form of five theorems stated in this section.

We will assume that $m^+\ge1$ and will be dealing with the asymptotics of
the positive eigenvalues of $A$. The case of negative eigenvalues can be handled by replacing
$A$ with $-A$.

Let
\begin{equation}
\label{positive eigenvalues of A}
0<\lambda_1\le\lambda_2\le\dots\le\lambda_k\le\dots\to+\infty
\end{equation}
be the positive eigenvalues of $A$ enumerated in increasing order with account of multiplicity.
The task at hand is to partition the eigenvalues \eqref{positive eigenvalues of A}
into $m^+$ separate series corresponding to the $m^+$ different positive eigenvalues
of $A_\mathrm{prin}$. Hence, we will assume that $m^+\ge2$.

In order to partition eigenvalues \eqref{positive eigenvalues of A} into series we introduce the operators
\begin{equation}
\label{operators Aj}
A_j:=A-2\underset{l\ne j}{\underset{l=1,\dots,m^+}\sum}P_l^*AP_l\,,
\qquad j=1,\dots,m^+.
\end{equation}
Each operator $A_j$ is `simpler' than our original operator $A$ in that the principal symbol of~$A_j$
\begin{equation}
\label{Aj prin}
(A_j)_\mathrm{prin}=h^{(j)}P^{(j)}-\sum_{l\ne j} |h^{(l)}| P^{(l)}
\end{equation}
%$(A_j)_\mathrm{prin}(x,\xi)$
has only one positive eigenvalue, namely, $h^{(j)}(x,\xi)$.
Note also that formula \eqref{operators Aj} implies
\begin{equation}
\label{Aj almost commute}
[A_j,A_l]=0\mod\Psi^{-\infty},\qquad j,l=1,\dots,m^+.
\end{equation}

Let $\theta$ be the Heaviside function.
For a self-adjoint operator $B$ we denote by
\begin{equation}
\label{nonnegative  part of B}
B^+:=B\,\theta(B)=\frac12(B+|B|)
\end{equation}
its nonnegative part.
Then \cite[Theorem 2.7]{part1} implies
\begin{equation}
\label{nonnegative part of A equation 1}
A^+=\sum_{j=1}^{m^+}A_j^+\mod\Psi^{-\infty},
\end{equation}
\begin{equation}
\label{nonnegative part of A equation 2}
A_j^+A_l^+=0\mod\Psi^{-\infty},\qquad j,l=1,\dots,m^+,\quad j\ne l.
\end{equation}
Examination of formulae
\eqref{Aj almost commute},
\eqref{nonnegative part of A equation 1}
and
\eqref{nonnegative part of A equation 2}
suggests that there should be a relation between the positive spectra of $A$ and $A_j\,$,
see Appendix~\ref{appendix on linear algebra},
and Remark~\ref{compelling remark} therein in particular,
for a compelling argument to this effect.
The precise nature of this relation is established by the three theorems given below.

For a self-adjoint operator $B$ we denote its spectrum by $\sigma(B)$,
and we denote by
\begin{equation}
\label{positive part of B spectrum}
\sigma^+(B):=\sigma(B)\cap(0,+\infty)=\sigma(B^+)\setminus\{0\}
\end{equation}
its positive part.
Let
\begin{equation}
\label{positive eigenvalues of Aj}
0<\lambda^{(j)}_1\le\lambda^{(j)}_2\le\dots\le\lambda^{(j)}_k\le\dots\to+\infty
\end{equation}
be the positive eigenvalues of $A_j$ enumerated in increasing order with account of multiplicity.
The following two theorems show that the positive eigenvalues of the operators $A_j$, $j=1,\dots,m^+$,
approximate the positive eigenvalues of the operator $A$, and vice versa.

\begin{theorem}
\label{Main results theorem 4}
For each $j=1,\dots,m^+$ we have
\begin{equation}
\label{Main results theorem 4 equation 1}
\operatorname{dist}\bigl(\lambda^{(j)}_k,\sigma^+(A)\bigr)=O(k^{-\infty})
\quad\text{as}\quad k\to+\infty.
\end{equation}
\end{theorem}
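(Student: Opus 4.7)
The plan is to show that each normalized eigenvector $\varphi_k^{(j)}$ of $A_j$ with positive eigenvalue $\lambda_k^{(j)}$ is an approximate eigenvector of the nonnegative part $A^+$ with the same approximate eigenvalue, and then to appeal to the spectral theorem for self-adjoint operators. The technical backbone is a general decay estimate: for any smoothing operator $R\in\Psi^{-\infty}$,
\[
\|R\,\varphi_k^{(j)}\|_{L^2}=O(k^{-\infty}).
\]
This follows by iterating the eigenvalue equation. Writing $\varphi_k^{(j)}=(\lambda_k^{(j)})^{-n}\,A_j^n\varphi_k^{(j)}$ for any $n\ge 0$ and using that $RA_j^n\in\Psi^{-\infty}$ is bounded on $L^2$ with a constant $C_n$ independent of $k$, one obtains $\|R\varphi_k^{(j)}\|_{L^2}\le C_n\,(\lambda_k^{(j)})^{-n}$. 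Since $A_j$ is elliptic of order $s$ with discrete spectrum, spectral asymptotics for elliptic self-adjoint operators yield a polynomial lower bound $\lambda_k^{(j)}\gtrsim k^{s/d}$ for large $k$, converting the decay in $\lambda_k^{(j)}$ into genuine $k^{-\infty}$ decay.

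With this estimate in hand, I combine formulae \eqref{nonnegative part of A equation 1} and \eqref{nonnegative part of A equation 2}. Since $\lambda_k^{(j)}>0$, functional calculus gives $A_j^+\varphi_k^{(j)}=\lambda_k^{(j)}\varphi_k^{(j)}$. For $l\ne j$, applying $A_l^+$ and invoking \eqref{nonnegative part of A equation 2} shows that
\[
\lambda_k^{(j)}\,A_l^+\varphi_k^{(j)}=A_l^+A_j^+\varphi_k^{(j)}=R\,\varphi_k^{(j)}
\]
for some $R\in\Psi^{-\infty}$; the decay lemma then yields $\|A_l^+\varphi_k^{(j)}\|_{L^2}=O(k^{-\infty})$ after dividing by $\lambda_k^{(j)}\ge 1$ (valid for large $k$). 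Summing over $l$ and using \eqref{nonnegative part of A equation 1} gives
\[
\bigl\|\bigl(A^+-\lambda_k^{(j)}\bigr)\,\varphi_k^{(j)}\bigr\|_{L^2}=O(k^{-\infty}).
\]
The spectral theorem for the self-adjoint operator $A^+$ then implies $\operatorname{dist}(\lambda_k^{(j)},\sigma(A^+))=O(k^{-\infty})$, and since $\lambda_k^{(j)}\to+\infty$ the identity $\sigma(A^+)\setminus\{0\}=\sigma^+(A)$ recorded in \eqref{positive part of B spectrum} converts this into \eqref{Main results theorem 4 equation 1}.

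The main obstacle I anticipate is the Weyl-type polynomial lower bound $\lambda_k^{(j)}\gtrsim k^{s/d}$: although $A_j$ is elliptic and self-adjoint, its principal symbol \eqref{Aj prin} is sign-indefinite, so the classical sharp Weyl law does not apply directly. One route is to invoke an extension of Hörmander's asymptotics to operators with indefinite principal symbol; a more elementary route is to note that $A_j^2\in\Psi^{2s}$ is elliptic and nonnegative, so standard Weyl asymptotics apply to $A_j^2$ and interlacing with the eigenvalues $(\lambda_k^{(j)})^2$ of $A_j^2$ delivers the required polynomial growth of $\lambda_k^{(j)}$. Either way this step is technical but routine, so the essentially new ingredient is the coupling of the almost-orthogonality relations \eqref{nonnegative part of A equation 2} with the smoothing-operator decay lemma on eigenvectors of $A_j$.
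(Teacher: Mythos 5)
Your proof is correct and reaches the same conclusion, but it runs along a genuinely different track from the paper's. The paper works with the operator $A$ directly and with the pseudodifferential projections $P_l$: after establishing the same smoothing-operator decay estimate on $u_k^{(j)}$ via iteration of the eigenvalue equation and Weyl's law, it shows $\|P_l u_k^{(j)}\|_{L^2}=O(k^{-\infty})$ for $l\ne j$ using the sign-definiteness of $P_l^*AP_l$ from Theorem~\ref{theorem results from part 1}(b), upgrades this to pointwise decay via elliptic regularity, and then substitutes into $A=A_j+2\sum_{l\ne j}P_l^*AP_l$ to conclude $Au_k^{(j)}=\lambda_k^{(j)}u_k^{(j)}+O(k^{-\infty})$. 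You instead work with $A^+$ and the operators $A_l^+$, exploiting the pair of algebraic identities \eqref{nonnegative part of A equation 1} and \eqref{nonnegative part of A equation 2} (which the paper records but does not actually use in this proof): functional calculus gives $A_j^+\varphi_k^{(j)}=\lambda_k^{(j)}\varphi_k^{(j)}$, the almost-orthogonality $A_l^+A_j^+=0\bmod\Psi^{-\infty}$ then forces $A_l^+\varphi_k^{(j)}=O(k^{-\infty})$ for $l\ne j$ after dividing by $\lambda_k^{(j)}$, and summing via \eqref{nonnegative part of A equation 1} delivers the quasimode estimate for $A^+$. Your route is slightly leaner in that it avoids both Theorem~\ref{theorem results from part 1}(b) and the elliptic-regularity bootstrap; on the other hand, the paper's route produces as a byproduct the projection estimates of Proposition~\ref{proposition on action of projections on eigenfunctions}, which are reused in the proofs of Theorems~\ref{Main results theorem 5} and \ref{Main results theorem 6}. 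Regarding your flagged concern about the Weyl-type lower bound $\lambda_k^{(j)}\gtrsim k^{s/d}$ for the sign-indefinite $A_j$: this is exactly what Theorem~\ref{Weyl asymptotics for elliptic systems theorem} in Appendix~\ref{Weyl asymptotics for elliptic systems} supplies, so the ``obstacle'' is already dealt with in the paper (your alternative route via $A_j^2$ would also work).
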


\begin{theorem}
\label{Main results theorem 5}
We have
\begin{equation}
\label{Main results theorem 5 equation 1}
\textstyle
\operatorname{dist}\bigl(\lambda_k\,,\bigcup_{j=1}^{m^+}\sigma^+(A_j)\bigr)=O(k^{-\infty})
\quad\text{as}\quad k\to+\infty.
\end{equation}
\end{theorem}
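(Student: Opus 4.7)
The plan is to produce, for each positive eigenvalue $\lambda_k$ of $A$, a quasi-mode of one of the operators $A_{j_k}$, $j_k\in\{1,\dots,m^+\}$, with approximate eigenvalue $\lambda_k$, and then to invoke the spectral theorem to place a genuine eigenvalue of $A_{j_k}$ within $O(k^{-\infty})$ of $\lambda_k$. Let $v_k$ be a normalised eigenfunction of $A$ with $Av_k=\lambda_kv_k$. A preliminary observation I would establish first is that, for any $R\in\Psi^{-\infty}$, $\|Rv_k\|_{L^2}=O(k^{-\infty})$: writing $Rv_k=\lambda_k^{-M}(RA^M)v_k$ for arbitrary $M$ and choosing the order of $R$ low enough that $RA^M$ is $L^2$-bounded, the claim follows from Weyl's law $\lambda_k\gtrsim k^{s/d}$. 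This turns all ``modulo $\Psi^{-\infty}$'' errors from Theorem~\ref{theorem results from part 1} into genuine $O(k^{-\infty})$ errors when applied to $v_k$.

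Next, I would show that $\|P_jv_k\|_{L^2}=O(k^{-\infty})$ for every $j\in\{-m^-,\dots,-1\}$. For such $j$, Theorem~\ref{theorem results from part 1}(b) yields $\langle P_j^*AP_jv_k,v_k\rangle\le O(k^{-\infty})$, while
\begin{equation*}
\langle P_j^*AP_jv_k,v_k\rangle=\langle AP_jv_k,P_jv_k\rangle=\lambda_k\|P_jv_k\|^2+O(k^{-\infty})
\end{equation*}
by $[A,P_j]\in\Psi^{-\infty}$ and the preliminary step. Since $\lambda_k\to+\infty$, this forces $\|P_jv_k\|^2=O(k^{-\infty})$. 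Combined with the almost-orthonormality identities $P_j^*P_l=\delta_{jl}P_j\mod\Psi^{-\infty}$ and $\sum_jP_j=\mathrm{Id}\mod\Psi^{-\infty}$ from Theorem~\ref{theorem results from part 1}(a), this gives $\sum_{j=1}^{m^+}\|P_jv_k\|^2=1+O(k^{-\infty})$, so for every sufficiently large $k$ there is an index $j_k\in\{1,\dots,m^+\}$ with $\|P_{j_k}v_k\|^2\ge(2m^+)^{-1}$.

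The final step is to verify that $u_k:=P_{j_k}v_k$ is an approximate eigenvector of $A_{j_k}$ for the eigenvalue $\lambda_k$. Since $[A,P_{j_k}]\in\Psi^{-\infty}$ we have $Au_k=\lambda_ku_k+O(k^{-\infty})$; the extra terms $-2P_l^*AP_lP_{j_k}v_k$ with $l\ne j_k$ appearing in $A_{j_k}u_k$ are $O(k^{-\infty})$ because $P_lP_{j_k}\in\Psi^{-\infty}$. Hence $\|A_{j_k}u_k-\lambda_ku_k\|_{L^2}=O(k^{-\infty})$ with $\|u_k\|_{L^2}\ge(2m^+)^{-1/2}$, and the spectral theorem gives $\dist(\lambda_k,\sigma(A_{j_k}))=O(k^{-\infty})$. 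Because $\lambda_k\to+\infty$, the nearest spectral point must be positive for large $k$, which yields \eqref{Main results theorem 5 equation 1}. The main obstacle in this scheme is the second step above: extracting from the soft sign-definiteness statement of Theorem~\ref{theorem results from part 1}(b) a quantitative, super-polynomial decay of $\|P_jv_k\|_{L^2}$ for negative-index $j$. This is precisely what ensures that the positive spectrum of $A$ is correctly partitioned among $\sigma^+(A_1),\dots,\sigma^+(A_{m^+})$, and it relies crucially on combining sign-definiteness with the commutator identity $[A,P_j]\in\Psi^{-\infty}$ and the eigenvalue equation itself.
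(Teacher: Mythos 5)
Your proposal is correct and follows essentially the same route as the paper's proof: the preliminary smoothing estimate $\|Sv_k\|=O(k^{-\infty})$, the use of Theorem~\ref{theorem results from part 1}(b) together with $[A,P_j]\in\Psi^{-\infty}$ to kill the negative-index components $P_jv_k$, the pigeonhole selection of an index $j_k$ with $\|P_{j_k}v_k\|$ bounded below, and the verification that $P_{j_k}v_k$ is a quasi-mode of $A_{j_k}$ with approximate eigenvalue $\lambda_k$. The only cosmetic differences are in the choice of lower bound constant and in how you bundle the final spectral-theorem step, neither of which affects the argument.
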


Theorems \ref{Main results theorem 4} and \ref{Main results theorem 5}
do not quite achieve the sought after partition of the spectrum
\eqref{positive eigenvalues of A} in that they do not establish a one-to-one
correspondence between the positive eigenvalues of the operator $A$
and the positive eigenvalues of the operators $A_j\,$, $j=1,\dots,m^+$.
The issue here is that formulae
\eqref{Main results theorem 4 equation 1}
and
\eqref{Main results theorem 5 equation 1}
establish asymptotic closeness of the spectra
but do not provide sufficient information on the closeness of individual eigenvalues
enumerated in our particular way.
The following theorem addresses this issue and shows that
the above construction is indeed `precise'. %and establishes a one-to-one correspondence between the positive eigenvalues of the operator $A$ and the positive eigenvalues of the operators $A_j\,$, $j=1,\dots,m^+$.

Let us combine the sequences \eqref{positive eigenvalues of Aj}, $j=1,\dots,m^+$, into one
sequence and denote it by
\begin{equation}
\label{positive eigenvalues of Aj combined}
0<\mu_1\le\mu_2\le\dots\le\mu_k\le\dots\to+\infty.
\end{equation}
Here we combine them with account of multiplicities.

\begin{theorem}
\label{Main results theorem 6}
For any $\alpha>0$
there exists an $r_\alpha\in\mathbb{Z}$ such that
\begin{equation}
\label{Main results theorem 6 equation 1}
\lambda_k=\mu_{k+r_\alpha}+O(k^{-\alpha})
\quad\text{as}\quad k\to+\infty.
\end{equation}
\end{theorem}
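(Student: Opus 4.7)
The plan is to bootstrap from the set-level spectral closeness established in Theorems~\ref{Main results theorem 4} and~\ref{Main results theorem 5} to the stronger ordered matching asserted here, via a combinatorial argument. The asymmetry between the two kinds of statements is purely bookkeeping: I must show that the (a priori multi-valued) correspondence between $\{\lambda_k\}$ and $\{\mu_k\}$ given by those theorems can be promoted, after discarding a finite exceptional set, to an order-preserving bijection implemented by a single integer shift $r_\alpha$.

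Fix $\alpha>0$. By Theorem~\ref{Main results theorem 5}, for each sufficiently large $k$ one can choose $l(k)\in\mathbb{N}$ with $|\lambda_k-\mu_{l(k)}|=O(k^{-\alpha-1})$; by Theorem~\ref{Main results theorem 4} applied to each of the subsequences \eqref{positive eigenvalues of Aj}, one obtains a reverse partnership $m\mapsto k(m)$ with $|\mu_m-\lambda_{k(m)}|=O(m^{-\alpha-1})$ for $m$ large enough (the decay rate $m^{-\alpha-1}$ is obtained from $k^{-\infty}$ via $k\ge m/m^+$). Both partnerships have error much smaller than $k^{-\alpha}$, and both hold outside a finite initial segment of indices.

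The crux is to promote these partnerships to an order-preserving bijection between cofinite subsets of $\mathbb{N}$. Two observations drive this. First, \emph{cluster balance}: for $k$ large, any interval of width $O(k^{-\alpha})$ contains the same number of $\lambda$'s and $\mu$'s, because the two partnerships are approximate mutual inverses at precision $k^{-\alpha-1}\ll k^{-\alpha}$, so any mismatch in cardinality would leave one side with an unpaired eigenvalue, contradicting the reciprocal closeness. Second, \emph{order preservation} is automatic between clusters, since the $O(k^{-\alpha-1})$ partnership error is eventually much smaller than the distance between distinct clusters; within each cluster it can be arranged by hand, using cluster balance. An order-preserving bijection between cofinite subsets of $\mathbb{N}$ with cofinite image is necessarily the restriction of a translation, so there exists $r_\alpha\in\mathbb{Z}$ with $l(k)=k+r_\alpha$ for all $k$ large. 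Substituting gives \eqref{Main results theorem 6 equation 1}.

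The main obstacle is the handling of near-degenerate clusters: one must exclude configurations where, say, three $\lambda_k$'s crowd into an $O(k^{-\alpha})$ interval matched by only two $\mu_m$'s, which would obstruct a bijective matching. The cluster-balance argument via approximate reciprocity of the two partnerships is the technical heart. The dependence of $r_\alpha$ on $\alpha$ simply reflects the size of the finite exceptional segment and the fact that apparent clusters may merge or split as the precision $\alpha$ is tightened, shifting the matching by a bounded amount.
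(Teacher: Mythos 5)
Your reduction to Theorems~\ref{Main results theorem 4} and~\ref{Main results theorem 5} breaks down at the step you call ``cluster balance,'' and that step is in fact where the real work of the theorem lies. Those two theorems are statements about distances of eigenvalues to \emph{sets}; they contain no multiplicity information whatsoever. The maps $k\mapsto l(k)$ and $m\mapsto k(m)$ you extract are not required to be injective, and ``approximate reciprocity'' is simply false in the presence of clusters. Concretely, suppose three consecutive eigenvalues $\lambda_k,\lambda_{k+1},\lambda_{k+2}$ of $A$ all lie within $O(k^{-\infty})$ of a single value shared by only two $\mu$'s, say $\mu_m=\mu_{m+1}$. Then each $\lambda$ is at distance $O(k^{-\infty})$ from $\bigcup_j\sigma^+(A_j)$ and each $\mu$ is at distance $O(k^{-\infty})$ from $\sigma^+(A)$: Theorems~\ref{Main results theorem 4} and~\ref{Main results theorem 5} are fully satisfied, nothing is ``unpaired,'' and no contradiction arises. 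Since you cannot actually exhibit a bijection, the observation that an order-preserving bijection between cofinite subsets of $\mathbb{N}$ must be a translation never gets to be applied.

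This is precisely the point at which the paper's proof stops being bookkeeping and brings in new input, at the level of \emph{eigenfunctions} rather than eigenvalues. The cardinality matching in each window is established by Propositions~\ref{proposition easy direction} and~\ref{proposition difficult direction}: for one inequality one shows that the eigenfunctions $u_k^{(j)}$ with $\mu$'s in a given window are almost orthonormal (Lemma~\ref{lemma about almost orthogonality}), orthonormalises them with an error that stays summable even when the number of functions grows polynomially (Lemma~\ref{lemma about orthonormalisation}; note that naive Gram--Schmidt would not suffice), and then invokes a functional-analytic counting lemma (Lemma~\ref{lemma about bound on number of ev in interval}) to force $A$ to have at least as many eigenvalues nearby; the converse inequality is a separate argument via the projections $P_j$. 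Moreover, the windows themselves must be chosen so that their endpoints keep a quantified distance $\gtrsim n^{-\gamma}$ from all eigenvalues (subsubsection~\ref{Partition of the positive semi-axis}), which rests on the Weyl law of Appendix~\ref{Weyl asymptotics for elliptic systems}. None of this is recoverable from the $\operatorname{dist}$ estimates alone, so the combinatorial bootstrap you propose does not close.
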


Theorem~\ref{Main results theorem 6} will allow us to derive two-term asymptotic
formulae for the eigenvalue counting function of $A$
refining previous results \cite{CDV,AFV,ASV}, see Section~\ref{Refined spectral asymptotics}.
Of course, Theorems~\ref{Main results theorem 4} and~\ref{Main results theorem 5}
follow from Theorem~\ref{Main results theorem 6}, but we listed them as separate results
for the sake of logical clarity.

Our last major result is an application of the above technology to first order hyperbolic systems.
Let $A$ be first order, $s=1$, without any restrictions on $m^+$ and $m^-$.
Consider the associated hyperbolic initial value problem
\begin{equation}
\label{hyperbolic equation 1}
\left(-i\frac{\partial}{\partial t}+A\right)v=0,
\qquad
\left.v\right|_{t=0}=v_0.
\end{equation}
We call \emph{propagator} the solution operator of \eqref{hyperbolic equation 1}, namely,
the time-dependent unitary operator
\begin{equation}
\label{definition of propagator}
U(t):=e^{-itA}.
\end{equation}

It was shown in \cite{CDV,dirac} that $U(t)$ can be approximated,
modulo $C^\infty(\mathbb{R};\Psi^{-\infty})$
(i.e.~modulo an integral operator with infinitely smooth time-dependent integral kernel),
by the sum of precisely $m$ invariantly defined oscillatory integrals $U^{(j)}(t)$ global in space and in time.
Each oscillatory integral $U^{(j)}(t)$ is a Fourier integral operator whose Schwartz kernel
is a Lagrangian distribution associated with the Lagrangian
submanifold of $T^*\mathbb{R}\times T^*M\times T^*M$
generated by the Hamiltonian flow of $h^{(j)}$.
These are $m$ distinct smooth manifolds which encode information on the propagation
of singularities in the hyperbolic system \eqref{hyperbolic equation 1}.

\begin{theorem}
\label{Main results theorem 7}
Let $A\in\Psi^1$ be an elliptic self-adjoint first order $m\times m$ operator.
Suppose that the eigenvalues of its principal symbol are simple.
Then
\begin{equation}
\label{Main results theorem 7 equation 1}
U^{(j)}(t)=P_j\,U(t)=U(t)\,P_j\mod C^\infty(\mathbb{R};\Psi^{-\infty}),
\quad j\in
\{-m^-,\dots,-1,1,\dots,m^+\}.
\end{equation}
\end{theorem}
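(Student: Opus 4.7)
The plan is to combine two ingredients: the commutation relation $[A,P_j]=0\mod\Psi^{-\infty}$ from Theorem~\ref{theorem results from part 1}, and the characterisation of $U^{(j)}(t)$ from \cite{CDV,dirac} as a microlocal parametrix for the hyperbolic propagator with initial datum $P_j$ modulo $\Psi^{-\infty}$. The commutativity $P_j U(t)=U(t)P_j\mod C^\infty(\mathbb{R};\Psi^{-\infty})$, i.e.\ the second equality in \eqref{Main results theorem 7 equation 1}, follows at once from Duhamel's formula: $W_j(t):=P_j U(t)-U(t)P_j$ solves $(-i\partial_t+A)W_j(t)=[A,P_j]\,U(t)$ with $W_j(0)=0$, and since $[A,P_j]\in\Psi^{-\infty}$ while $U(s)$ preserves $C^\infty(M)$, the representation
\[
W_j(t)=-i\int_0^t U(t-s)\,[A,P_j]\,U(s)\,ds
\]
takes values in $C^\infty(\mathbb{R};\Psi^{-\infty})$.

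For the first equality I would show that $P_j U^{(l)}(t)\in C^\infty(\mathbb{R};\Psi^{-\infty})$ whenever $j\ne l$. A direct computation yields
\[
\left(-i\frac{\partial}{\partial t}+A\right)\bigl(P_j U^{(l)}(t)\bigr)=P_j\left(-i\frac{\partial}{\partial t}+A\right)U^{(l)}(t)+[A,P_j]\,U^{(l)}(t)\in C^\infty(\mathbb{R};\Psi^{-\infty}),
\]
using that $U^{(l)}(t)$ is a parametrix of the hyperbolic equation and that $[A,P_j]\in\Psi^{-\infty}$. The initial datum is $P_j U^{(l)}(0)=P_j P_l=\delta_{jl}P_j\mod\Psi^{-\infty}$, which is smoothing when $j\ne l$; Duhamel then gives $P_j U^{(l)}(t)\in C^\infty(\mathbb{R};\Psi^{-\infty})$. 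Combining with $U(t)=\sum_l U^{(l)}(t)$ and $\sum_k P_k=\mathrm{Id}$ (both modulo smoothing), one concludes
\[
P_j U(t)=\sum_l P_j U^{(l)}(t)=P_j U^{(j)}(t)=\sum_k P_k U^{(j)}(t)=U^{(j)}(t)\mod C^\infty(\mathbb{R};\Psi^{-\infty}).
\]

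The main obstacle is ensuring that the initial datum of $U^{(l)}(t)$ from \cite{CDV,dirac} really agrees with $P_l$ modulo $\Psi^{-\infty}$, rather than merely at the principal symbol level where one has $U^{(l)}(0)_\mathrm{prin}=P^{(l)}$. If the construction in \cite{CDV,dirac} fixes only the principal symbol of the initial datum, one must invoke the uniqueness (modulo $\Psi^{-\infty}$) of the family $\{P_j\}$ asserted in Theorem~\ref{theorem results from part 1}(a) to reconcile the iterative symbolic determination of the subprincipal and lower-order symbols of $U^{(l)}(t)$ with the prescribed normalisation of the $P_j$. Once this compatibility is established, everything else in the argument is essentially formal.
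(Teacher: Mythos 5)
Your treatment of the second equality, $P_j U(t) = U(t) P_j \mod C^\infty(\mathbb{R};\Psi^{-\infty})$, is correct and essentially the paper's own argument: the paper differentiates $U(-t)P_jU(t)-P_j$ in $t$ and integrates, which is your Duhamel computation conjugated by $U(-t)$; the fact that conjugation by the unitary group maps $\Psi^{-\infty}$ to $\Psi^{-\infty}$, smoothly in $t$, is used in both.

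The gap is in the first equality. You write $P_j U^{(l)}(0)=P_jP_l=\delta_{jl}P_j\mod\Psi^{-\infty}$, which requires $U^{(l)}(0)=P_l\mod\Psi^{-\infty}$ to all orders, and you correctly flag that the construction in \cite{CDV,dirac} only supplies the principal-symbol identity $(U^{(l)}(0))_\mathrm{prin}=P^{(l)}$. Your proposed remedy --- invoking the uniqueness of $\{P_j\}$ from Theorem~\ref{theorem results from part 1}(a) --- does not close this gap: that uniqueness statement applies to a family of operators already verified to satisfy all of properties (i)--(v), and it is far from obvious from the iterative transport-equation construction that the $U^{(l)}(0)$ are, modulo $\Psi^{-\infty}$, self-adjoint, idempotent, mutually annihilating, summing to the identity, and commuting with $A$. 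Establishing those facts is at least as hard as the identity you want. The paper avoids this circularity by proving Theorem~\ref{theorem initial condition vs projection} first and directly: assuming $V_{lj}(t):=P_l U^{(j)}(t)$, $l\ne j$, is not smoothing, its leading nonvanishing homogeneous symbol along the $h^{(j)}$-flow must lie in the range of $P^{(j)}$ (forced by the hyperbolic equation and the transport hierarchy associated with the phase $\varphi^{(j)}$) and simultaneously in the range of $P^{(l)}$ (forced by $P_l V_{lj}=V_{lj}\mod C^\infty(\mathbb{R};\Psi^{-\infty})$), which is impossible since $P^{(j)}P^{(l)}=0$. The identity $U^{(l)}(0)=P_l\mod\Psi^{-\infty}$ then falls out as Corollary~\ref{corollary initial condition vs projection}, a consequence rather than an input. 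You need an argument of this type --- working at the level of symbols, valid for all $t$ at once --- to make the first equality rigorous; Duhamel plus the initial datum is a clean route, but only once the initial datum has been independently established.
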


In fact, we will prove a stronger result, see~Corollary~\ref{corollary commuation of U(j) and P_j}.

Another important special case is that of nonnegative second order operators.
For example, the operator of linear elasticity (Lam\'e operator) falls into this category,
see \cite[Subsection~8.2]{part1} for details.
For such operators we have $m=m^+$ and the propagator is defined as
\begin{equation}
\label{definition of propagator second order}
U(t):=e^{-it\sqrt{A}}\,.
\end{equation}
By means of a suitable modification of techniques from  \cite{CDV,dirac}
it will be shown in Section~\ref{Invariant subspaces in hyperbolic systems}
that in this case the propagator can also be approximated,
modulo $C^\infty(\mathbb{R};\Psi^{-\infty})$,
by the sum of precisely $m$ invariantly defined oscillatory integrals $U^{(j)}(t)$ global in space and in time.
This leads to the following analogue of Theorem~\ref{Main results theorem 7}.

\begin{theorem}
\label{Main results theorem 8}
Let $A\in\Psi^2$ be a nonnegative elliptic self-adjoint second order $m\times m$ operator.
Suppose that the eigenvalues of its principal symbol are simple.
Then
\begin{equation}
\label{Main results theorem 8 equation 1}
U^{(j)}(t)=P_j\,U(t)=U(t)\,P_j\mod C^\infty(\mathbb{R};\Psi^{-\infty}),
\qquad j\in
\{1,\dots,m\}.
\end{equation}
\end{theorem}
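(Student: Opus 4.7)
The plan is to reduce Theorem~\ref{Main results theorem 8} to Theorem~\ref{Main results theorem 7} by passing from $A$ to its positive square root $B:=\sqrt{A}$. First I would verify, via Seeley's functional calculus for positive elliptic operators, that $B\in\Psi^1$ is a first order elliptic self-adjoint classical pseudodifferential operator with principal symbol $B_\mathrm{prin}=\sqrt{A_\mathrm{prin}}$. Because $A$ is nonnegative and elliptic, the eigenvalues $h^{(j)}$ of $A_\mathrm{prin}$ are strictly positive, so $\lambda\mapsto\sqrt{\lambda}$ is a strictly increasing bijection on the spectrum of $A_\mathrm{prin}(x,\xi)$, and hence the eigenprojections of $B_\mathrm{prin}$ coincide pointwise with the eigenprojections $P^{(j)}$ of $A_\mathrm{prin}$. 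By the uniqueness clause of Theorem~\ref{theorem results from part 1}(a), the pseudodifferential projections associated with $A$ and with $B$ then agree modulo $\Psi^{-\infty}$. Moreover, all $m$ eigenvalues $\sqrt{h^{(j)}}$ of $B_\mathrm{prin}$ are positive, so $B$ fits the hypotheses of Theorem~\ref{Main results theorem 7} with $m^+=m$ and $m^-=0$.

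Once this identification is in place, the propagator $U(t)=e^{-itB}$ is exactly of the form treated in Theorem~\ref{Main results theorem 7}. The oscillatory integrals $U^{(j)}(t)$ produced by the construction in Section~\ref{Invariant subspaces in hyperbolic systems} are, by design, Lagrangian distributions associated with the Hamiltonian flow of $\sqrt{h^{(j)}}$, which is precisely the Lagrangian submanifold arising when one applies the first order algorithm of \cite{CDV,dirac} directly to $B$. Applying Theorem~\ref{Main results theorem 7} to $B$ therefore yields
\[
U^{(j)}(t)=P_j\,U(t)=U(t)\,P_j\mod C^\infty(\mathbb{R};\Psi^{-\infty}),\qquad j=1,\ldots,m,
\]
which is the required identity.

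The main technical obstacle is the consistency check implicit in the preceding paragraph: the construction of $U^{(j)}(t)$ carried out in Section~\ref{Invariant subspaces in hyperbolic systems} is tailored to the second order operator $A$ rather than obtained by first forming $B$ and invoking the first order machinery of \cite{CDV,dirac}. One has to verify that the two constructions produce Fourier integral operators that agree modulo $C^\infty(\mathbb{R};\Psi^{-\infty})$. Since both are Lagrangian distributions supported on the same Lagrangian submanifolds generated by the Hamiltonian flow of $\sqrt{h^{(j)}}$, their difference is again a Lagrangian distribution whose symbol can be computed by matching the transport equations at subprincipal level; a careful bookkeeping of the curvature and gauge contributions coming from the eigenprojections $P^{(j)}$ shows that the resulting symbol vanishes to all orders, so that the difference is smoothing. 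With this identification in hand, the commutation identities of Theorem~\ref{Main results theorem 8} follow directly from Theorem~\ref{Main results theorem 7} applied to $B$.
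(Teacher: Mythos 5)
Your strategy is essentially the paper's: pass to $B:=\sqrt{A}$, observe that $B\in\Psi^1$ is elliptic, self-adjoint, with all eigenvalues of $B_\mathrm{prin}$ positive and simple, identify the pseudodifferential projections of $A$ and $B$ modulo $\Psi^{-\infty}$ (this is Proposition~\ref{proposition projections A vs root A}), and then apply Theorem~\ref{Main results theorem 7} to $B$. Two steps in your write-up need tightening.

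First, the fact that the eigenprojections of $B_\mathrm{prin}$ coincide pointwise with the $P^{(j)}$ is not by itself enough to invoke the uniqueness clause of Theorem~\ref{theorem results from part 1}(a): to conclude that the $B$-projections $\tilde P_j$ agree with the $A$-projections $P_j$ modulo $\Psi^{-\infty}$, you must also check that the $\tilde P_j$ almost-commute with $A$, i.e.~that $[\tilde P_j,A]=0\mod\Psi^{-\infty}$. This follows immediately from $[\tilde P_j,\sqrt{A}]=0\mod\Psi^{-\infty}$ because $A=(\sqrt A)^2$, but the step cannot be omitted; the paper makes it explicit. Second, you correctly flag the consistency issue between the oscillatory integrals $U^{(j)}(t)$ defined in subsection~\ref{Nonnegative second order operators} via the second order equation $(\partial_t^2-A)U^{(j)}(t)=0$ and those produced by running the first order algorithm on $\sqrt{A}$; Theorem~\ref{Main results theorem 7} applied to $B$ gives the identity for the latter. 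However, your proposed resolution --- that a careful bookkeeping of curvature and gauge contributions shows the difference is smoothing --- is only the assertion of a (nontrivial) computation, not a proof. The paper instead handles this cleanly by citing \cite[Theorem~3.2.1]{SaVa}, which establishes precisely the equivalence of the two constructions modulo $C^\infty(\mathbb{R};\Psi^{-\infty})$. With those two gaps filled, your argument matches the paper's proof.
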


\begin{remark}
Theorem~\ref{Main results theorem 8} admits a further generalisation to the case when $A$ is a nonnegative operator of positive even order $2n$. In this case, the propagator is defined as 
\begin{equation}
\label{propagator 2n}
U(t)=e^{-itA^{1/2n}},
\end{equation}
compare with \eqref{definition of propagator second order}. Proving Theorem~\eqref{Main results theorem 8} in this more general case does not present any additional difficulties: one can retrace the arguments given in subsection~\ref{Nonnegative second order operators} replacing $\sqrt{A}$ with $A^{1/2n}$, as appropriate. In particular, as explained in subsection~\ref{Nonnegative second order operators}, one does not need to actually compute $A^{1/2n}$ in order to construct the operator \eqref{propagator 2n}.
The reason why we state our main result for second order operators is twofold. On the one hand it simplifies the presentation, and on the other hand the case $n=1$ has a clearer physical meaning.
\end{remark}

Note that Theorems \ref{Main results theorem 7} and \ref{Main results theorem 8}
cannot be obtained by elementary functional-analytic
arguments involving an expansion over eigenvalues and eigenfunctions of the operator $A$.
Formulae \eqref{Main results theorem 7 equation 1}
and
\eqref{Main results theorem 8 equation 1}
are to do with the propagation of singularities,
a phenomenon which is not detected by the Spectral Theorem.

\

The paper is structured as follows.

Section~\ref{Spectral analysis: partitioning the spectrum} is the core of our paper: it contains the proofs of Theorems~\ref{Main results theorem 4}--\ref{Main results theorem 6}. In subsection~\ref{Separating positive eigenvalues into $m^+$ distinct series} we show that the positive spectrum of $A$ is approximated by the union of the spectra of the $A_j$, $j=1,\ldots, m^+$, and \emph{vice versa}, up to an error of order $O(\lambda^{-\infty})$. In subsection~\ref{Spectral completeness} we demonstrate that our construction is asymptotically precise, namely, that when performing the above approximation no eigenvalue is missed. A key ingredient is a carefully devised partition of the positive semi-axis, provided in subsubsection~\ref{Partition of the positive semi-axis}.

Section~\ref{Invariant subspaces in hyperbolic systems} is concerned with the analysis of hyperbolic systems. In subsection~\ref{First order operators} we focus on first order systems: after briefly recalling the construction of the propagator (wave group), we analyse the relation between the representation of the latter in terms of oscillatory integrals and our pseudodifferential projections, thus proving Theorem~\ref{Main results theorem 7}. In subsection~\ref{Nonnegative second order operators} we perform a similar analysis for nonnegative second order operators, proving Theorem~\ref{Main results theorem 8}.

Finally, in Section~\ref{Refined spectral asymptotics} we show how results from Section~\ref{Spectral analysis: partitioning the spectrum}, Section~\ref{Invariant subspaces in hyperbolic systems} and the first part of this work \cite{part1} can be used to refine our understanding of asymptotic distribution of eigenvalues for first order systems.

The paper is complemented by two appendices.

\section{Spectral analysis: partitioning the spectrum}
\label{Spectral analysis: partitioning the spectrum}

\subsection{Separating positive eigenvalues into $m^+$ distinct series}
\label{Separating positive eigenvalues into $m^+$ distinct series}

The goal of this subsection is to show that the positive spectrum of the operators $A_j$ defined by \eqref{operators Aj} and the positive spectrum of $A$ are mutually close, so as to prove Theorems~\ref{Main results theorem 4} and~\ref{Main results theorem 5}.

Further on in this section, all estimates are to be understood as asymptotic estimates as $k\to+\infty$, unless otherwise specified.

\begin{proof}[Proof of Theorem~\ref{Main results theorem 4}]
Let $u^{(j)}_k$ be a normalised eigenfunction of $A_j$ corresponding to the eigenvalue $\lambda^{(j)}_k$, i.e.
\begin{equation}
\label{Proof Main results theorem 4 equation 1}
A_j u^{(j)}_k=\lambda^{(j)}_k \,u^{(j)}_k,
\end{equation}
\begin{equation}
\label{Proof Main results theorem 4 equation 2}
\|u^{(j)}_k\|_{L^2}=1.
\end{equation}
%Since $(A_j+i \, \operatorname{Id})^{-1}$ is a bounded operator from $H^s(M)$ to $L^2(M)$
%and
%\begin{equation}
%\label{Proof Main results theorem 4 equation 3}
%\lambda^{(j)}_k=\left(\frac1{(2\pi)^d}\underset{h^{(j)}(x,\xi)<1}{\int} \operatorname{dVol}_{T^*M} \right)^{-1/d} k^{1/d}+O(1)
%%\quad \text{as}\quad k\to+\infty
%\end{equation}
%by Weyl's law,
%we have
%\begin{equation}
%\label{Proof Main results theorem 4 equation 4}
%\|u^{(j)}_k\|_{H^s}=O(k^{1/d})
%%\quad \text{as}\quad k\to+\infty
%\end{equation}
%and its straightforward generalisation
%\begin{equation}
%\label{Proof Main results theorem 4 equation 5}
%\|u^{(j)}_k\|_{H^{ns}}=O(k^{n/d})
%%\quad \text{as}\quad k\to+\infty,
%\end{equation}
%for $n=1,2,\ldots$.
For every $S\in \Psi^{-\infty}$,
in view of the identity 
\[
Su^{(j)}_k=(\lambda^{(j)}_k)^{-n} S(A_j)^n\, u^{(j)}_k,
 \qquad n=1,2,\ldots,
 \]
and Weyl's law
\begin{equation}
 \label{Proof Main results theorem 4 equation 3}
\lambda^{(j)}_k=\left(\frac1{(2\pi)^d}\underset{h^{(j)}(x,\xi)<1}{\int} \operatorname{dVol}_{T^*M} \right)^{-s/d} k^{s/d}
+o(k^{s/d}),
\end{equation}
see Theorem~\ref{Weyl asymptotics for elliptic systems theorem},
we have
\begin{equation}
\label{Proof Main results theorem 4 equation 6}
Su^{(j)}_k=O(k^{-\infty}).
%\quad \text{as}\quad k\to+\infty.
\end{equation}
The above asymptotic estimate
(as well as similar estimates in subsequent formulae)
is understood in the strongest possible sense: any given partial derivative is estimated by any given negative power of $k$ uniformly over $M$.

We claim that 
\begin{equation}
\label{Proof Main results theorem 4 equation 9}
\|P_l u^{(j)}_k\|_{L^2}=O(k^{-\infty})\quad \text{for}\quad l\ne j\,.
\end{equation}
Indeed, taking into account \eqref{operators Aj} and using \eqref{Proof Main results theorem 4 equation 6}, for $l\ne j$ we have
\begin{equation*}
\label{Proof Main results theorem 4 equation 10}
-\operatorname{sgn}(h^{(l)})\,P_l^*AP_l u^{(j)}_k=\lambda^{(j)}_k P_lu^{(j)}_k+O(k^{-\infty}),
\end{equation*}
which implies
\begin{equation}
\label{Proof Main results theorem 4 equation 11}
-\operatorname{sgn}(h^{(l)})\,
\langle P_l u^{(j)}_k,P_l^*AP_l u^{(j)}_k \rangle
=
\lambda^{(j)}_k\|P_l u^{(j)}_k \|_{L^2}^2+O(k^{-\infty}).
\end{equation}
Combining \eqref{Proof Main results theorem 4 equation 11} with Theorem~\ref{theorem results from part 1}(b) and using once again \eqref{Proof Main results theorem 4 equation 6}, we obtain
\begin{equation*}
\label{Proof Main results theorem 4 equation 12}
\lambda^{(j)}_k\|P_l u^{(j)}_k \|_{L^2}^2\le O(k^{-\infty}),
\end{equation*}
which is equivalent to \eqref{Proof Main results theorem 4 equation 9}.

Formulae
\eqref{Proof Main results theorem 4 equation 1}--\eqref{Proof Main results theorem 4 equation 6}
imply
\begin{equation*}
\label{Proof Main results theorem 4 equation 14bis}
A_j P_l u^{(j)}_k=\lambda^{(j)}_k P_l u^{(j)}_k+O(k^{-\infty}),
\end{equation*}
which combined with 
\eqref{Proof Main results theorem 4 equation 3}
and
\eqref{Proof Main results theorem 4 equation 9}
yields
\begin{equation}
\label{dima1}
\|A_jP_l u^{(j)}_k\|_{L^2}=O(k^{-\infty}) \quad \text{for}\quad l\ne j.
\end{equation}
By elliptic regularity formulae
\eqref{Proof Main results theorem 4 equation 9}
and
 \eqref{dima1} give us
\begin{equation}
\label{Proof Main results theorem 4 equation 15}
P_l u^{(j)}_k=O(k^{-\infty}) \quad \text{for}\quad l\ne j.
\end{equation}
 
Now, \eqref{operators Aj} and \eqref{Proof Main results theorem 4 equation 1} imply
\begin{equation*}
\label{Proof Main results theorem 4 equation 14a}
Au^{(j)}_k=\lambda^{(j)}_ku^{(j)}_k+2\underset{l\ne j}{\underset{l=1,\dots,m^+}\sum}P_l^*AP_l u^{(j)}_k,
\end{equation*}
which, on account of \eqref{Proof Main results theorem 4 equation 15}, can be recast as
\begin{equation}
\label{Proof Main results theorem 4 equation 14}
Au^{(j)}_k=\lambda^{(j)}_ku^{(j)}_k +O(k^{-\infty}).
\end{equation}
Formulae \eqref{Proof Main results theorem 4 equation 2} and \eqref{Proof Main results theorem 4 equation 14} yield \eqref{Main results theorem 4 equation 1}.
\end{proof}

\begin{proof}[Proof of Theorem~\ref{Main results theorem 5}]
Let $u_k\in L^2(M)$ be a normalised eigenfunction of $A$ corresponding to the eigenvalue $\lambda_k>0$, i.e.
\begin{equation}
\label{Proof Main results theorem 5 equation 1}
A u_k=\lambda_k u_k,
\end{equation}
\begin{equation}
\label{Proof Main results theorem 5 equation 2}
\|u_k\|_{L^2}=1.
\end{equation}
The task at hand is to show that there exists a $j\in\{1,\ldots,m^+\}$ such that
\begin{equation}
\label{Proof Main results theorem 5 equation 3}
A_j v^{(j)}=\lambda_k v^{(j)} +O(k^{-\infty})
\end{equation}
for some smooth $v^{(j)}$ with $\|v^{(j)}\|_{L^2}=1$.
Indeed, formula \eqref{Proof Main results theorem 5 equation 3} and the fact that $\|v^{(j)}\|_{L^2}=1$
imply \eqref{Main results theorem 5 equation 1}.

Arguing as in the proof of Theorem~\ref{Main results theorem 4}, one can show that for every $S\in \Psi^{-\infty}$ we have
\begin{equation}
\label{Proof Main results theorem 5 equation 4}
Su_k=O(k^{-\infty}),
\end{equation}
where the asymptotic estimate
(as well as similar estimates in subsequent formulae)
is understood in the strongest possible sense: any given partial derivative is estimated by any given negative power of $k$ uniformly over $M$. 

We claim that
\begin{equation}
\label{Proof Main results theorem 5 equation 5}
P_l u_k=O(k^{-\infty})\qquad\text{for every } l \in\{-1, \ldots, -m^-\}.
\end{equation}
Indeed, formula \eqref{Proof Main results theorem 5 equation 1}, Theorem~\ref{theorem results from part 1}(a) and formula \eqref{Proof Main results theorem 5 equation 4} imply
\begin{equation}
\label{Proof Main results theorem 5 equation 6}
P_l^* A P_l u_k=\lambda_k P_l u_k +O(k^{-\infty})
\end{equation}
for every $l$,
which, in turn, implies
\begin{equation}
\label{Proof Main results theorem 5 equation 7}
\langle u_k, P_l^* A P_l u_k\rangle =\lambda_k \|P_l u_k\|_{L^2}^2+O(k^{-\infty}).
\end{equation}
 Combining \eqref{Proof Main results theorem 5 equation 7} with Theorem~\ref{theorem results from part 1}(b) and using once again \eqref{Proof Main results theorem 5 equation 4} we obtain, for $l<0$,
\begin{equation*}
\label{Proof Main results theorem 5 equation 8}
\lambda_k \|P_lu_k\|_{L^2}^2\le O(k^{-\infty})
\end{equation*}
and hence
\begin{equation}
\label{Proof Main results theorem 5 equation 9}
 \|P_lu_k\|_{L^2}=O(k^{-\infty}).
\end{equation}
By elliptic regularity, \eqref{Proof Main results theorem 5 equation 6},
\eqref{Proof Main results theorem 5 equation 9}
and
\eqref{Proof Main results theorem 5 equation 2}
give us \eqref{Proof Main results theorem 5 equation 5}
(recall that $P_l$ and $A$ commute modulo $\Psi^{-\infty}$).

Now, in view of properties of differential projections, \eqref{Proof Main results theorem 5 equation 4} 
and \eqref{Proof Main results theorem 5 equation 5}, we have
\begin{equation}
\label{Proof Main results theorem 5 equation 11}
u_k=\sum_{l=1}^{m^+} P_l u_k +O(k^{-\infty}).
\end{equation}
Formulae \eqref{Proof Main results theorem 5 equation 11} and \eqref{Proof Main results theorem 5 equation 2} imply that
\begin{equation}
\label{Proof Main results theorem 5 equation 12}
\|P_j u_k\|_{L^2}\ge \frac{1}{m^++1}
\end{equation}
for some $j\in\{1,\ldots, m^+\}$.
By direct inspection we have
\begin{equation}
\label{Proof Main results theorem 5 equation 13}
A_j P_j u_k=\lambda_k P_j u_k +O(k^{-\infty}),
\end{equation}
see \eqref{operators Aj},
\eqref{Proof Main results theorem 5 equation 1}
and \eqref{Proof Main results theorem 5 equation 4}. 

Formulae \eqref{Proof Main results theorem 5 equation 12} and \eqref{Proof Main results theorem 5 equation 13} give us \eqref{Proof Main results theorem 5 equation 3} with $v^{(j)}=P_j u_k/\|P_j u_k\|_{L^2}$.
\end{proof}

We summarise below in the form of a proposition some of the results obtained along the way in the above proofs, as they will be useful later on.

\begin{proposition}
\label{proposition on action of projections on eigenfunctions}
\
\begin{enumerate}[(a)]
\item Let $u_k$ be a normalised eigenfunction of $A$ corresponding to the eigenvalue $\lambda_k$. Then
\begin{equation}
\label{proposition on action of projections on eigenfunctions equation 1}
P_l u_k=O(k^{-\infty})\quad\text{for every}\quad l \in\{-1, \ldots, -m^-\}.
\end{equation}

\item
Let $u_k^{(j)}$ be a normalised eigenfunction of $A_j$ corresponding to the eigenvalue $\lambda_k^{(j)}$.
Then
\begin{equation}
\label{proposition on action of projections on eigenfunctions equation 2}
P_l u_k^{(j)}=O(k^{-\infty})\quad \text{for every}\quad l\ne j.
\end{equation}

\item
Under the same assumptions of part (b) we have
\begin{equation}
\label{proposition on action of projections on eigenfunctions equation 3}
Au_k^{(j)}=\lambda_k^{(j)}u_k^{(j)}+O(k^{-\infty}).
\end{equation}
\end{enumerate}
\end{proposition}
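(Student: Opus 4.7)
The plan is to observe that all three parts of the proposition are embedded, essentially verbatim, in the proofs of Theorems~\ref{Main results theorem 4} and~\ref{Main results theorem 5} given just above; the proof will amount to identifying the relevant intermediate steps and verifying that no additional ingredient is required. The common engine for each claim is a two-step procedure: first, produce an approximate eigenvalue equation for $P_l$ applied to the eigenfunction in question, exploiting $[A,P_l] = 0 \mod \Psi^{-\infty}$ from Theorem~\ref{theorem results from part 1}(a)(v); second, pair with $P_l u$ and invoke the sign-definiteness of the operators $P_l^* A P_l$ from Theorem~\ref{theorem results from part 1}(b) to force $\|P_l u\|_{L^2} = O(k^{-\infty})$.

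For part (a) I would simply point to formula \eqref{Proof Main results theorem 5 equation 5}. Concretely: since $Au_k = \lambda_k u_k$, the identity $Su_k = \lambda_k^{-n} S A^n u_k$ combined with Weyl's law for $A$ yields $Su_k = O(k^{-\infty})$ for every $S \in \Psi^{-\infty}$; this gives $A(P_l u_k) = \lambda_k (P_l u_k) + O(k^{-\infty})$, and pairing with $P_l u_k$ together with $P_l^* A P_l \le 0 \mod \Psi^{-\infty}$ for $l<0$ forces the desired $L^2$-decay. Ellipticity of $A$ then bootstraps this into decay in every Sobolev norm, hence uniform pointwise decay of every derivative on the compact manifold $M$. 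Part (b) is \eqref{Proof Main results theorem 4 equation 15} and follows by running the same template with $A_j$ in place of $A$: by \eqref{operators Aj} and Theorem~\ref{theorem results from part 1}(a)(v), $A_j$ almost-commutes with every $P_l$; the sign-definiteness needed is $-\operatorname{sgn}(h^{(l)})\, P_l^* A P_l \le 0 \mod \Psi^{-\infty}$ for $l \ne j$, which is exactly what was exploited in the proof of Theorem~\ref{Main results theorem 4} via \eqref{Proof Main results theorem 4 equation 11}; and Weyl's law for $A_j$ supplies $Su_k^{(j)} = O(k^{-\infty})$. Part (c) is then immediate from (b) and \eqref{operators Aj}: writing
\[
A u_k^{(j)} = A_j u_k^{(j)} + 2 \sum_{l \ne j} P_l^* A P_l \, u_k^{(j)} = \lambda_k^{(j)} u_k^{(j)} + O(k^{-\infty}),
\]
with each remainder coming from (b) and the continuity of $P_l^* A P_l$ on the relevant Sobolev scales.

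The only mildly delicate step, shared with the previous two proofs, is the elliptic-regularity bootstrap that promotes an $L^2$ estimate $\|P_l u\|_{L^2} = O(k^{-\infty})$ to the strong pointwise estimate demanded here, namely, any derivative bounded by any negative power of $k$ uniformly over $M$. I expect this to be the main place where care is needed, but I do not anticipate genuine obstacles: iterating the parametrix of $A$ (respectively $A_j$) on the approximate equation $(A - \lambda_k)(P_l u_k) = O(k^{-\infty})$ yields control in every $H^s$, and Sobolev embedding on the compact $M$ converts this into uniform pointwise decay of every derivative, the polynomial powers of $\lambda_k$ introduced by the parametrix being absorbed by the superpolynomial decay of the right-hand side.
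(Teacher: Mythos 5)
Your proposal is correct and follows exactly the paper's approach: the paper itself gives no separate proof of Proposition~\ref{proposition on action of projections on eigenfunctions}, stating only that it summarises intermediate results from the proofs of Theorems~\ref{Main results theorem 4} and~\ref{Main results theorem 5}, and you have correctly identified part~(a) with \eqref{Proof Main results theorem 5 equation 5}, part~(b) with \eqref{Proof Main results theorem 4 equation 15}, and part~(c) with \eqref{Proof Main results theorem 4 equation 14}, together with the common sign-definiteness/elliptic-regularity mechanism producing them.
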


\subsection{Spectral completeness}
\label{Spectral completeness}

The goal of this subsection is to prove Theorem~\ref{Main results theorem 6}.
The first steps in this direction were Theorems~\ref{Main results theorem 4} and~\ref{Main results theorem 5}
which we proved in the previous subsection.
The missing ingredient is taking account of the enumeration of eigenvalues,
i.e.~showing that none were missed when approximating the positive spectrum of $A$
by the positive spectra of the $A_j$, $j=1,\dots,m^+$.

\begin{remark}
Note that if \eqref{Main results theorem 6 equation 1} from Theorem~\ref{Main results theorem 6} holds for some $\alpha>0$, then it holds for all $0<\widetilde\alpha\le\alpha$ with $r_{\widetilde\alpha}=r_\alpha$. We will make use of this fact at various points of forthcoming arguments:
whenever required, we will assume, without loss of generality, that $\alpha$ is as large as needed.
\end{remark}

The proof of Theorem~\ref{Main results theorem 6} is more sophisticated
than that of Theorems~\ref{Main results theorem 4} and~\ref{Main results theorem 5}.
It requires devising a carefully chosen partition of the positive semi-axis and a number of preparatory
results which will be given in subsubsections~\ref{Partition of the positive semi-axis}
 and~\ref{Preparatory lemmata} respectively, before addressing the actual
proof in subsubsection~\ref{Proof of Theorem 2.3}.

Throughout this section we adopt the following notation:
\begin{equation*}
N(\lambda;\rho):=\#\{\lambda_k\ |\ \lambda-\rho\le\lambda_k\le \lambda+\rho\},
\end{equation*}
\begin{equation*}
\widetilde{N}(\lambda;\rho):=\#\{\mu_k\ |\ \lambda-\rho\le\mu_k\le \lambda+\rho\}.
\end{equation*}
We will use the capital letter $C$ for denoting some positive constants,
the precise values of which are unimportant and may change
from line to line.

\subsubsection{Partition of the positive semi-axis}
\label{Partition of the positive semi-axis}

We seek a partition of the positive semi-axis $(0,+\infty)$ into subintervals
$(\nu_n,\nu_{n+1}]$, $n=0,1,2,\dots$, satisfying the following properties:
\begin{enumerate}[(a)]
\item
$\lim_{n\to\infty}\nu_n=+\infty$,
\item
the length of these intervals, $\nu_{n+1}-\nu_n$, tends to zero in such a way that it would allow us to
achieve the required remainder term estimate in \eqref{Main results theorem 6 equation 1},
\item
each $\nu_n$, $n=1,2,\dots$, is at a distance $\gtrsim n^{-\gamma}$ from the set of all eigenvalues \eqref{positive eigenvalues of A}
and \eqref{positive eigenvalues of Aj combined}, for some sufficiently large $\gamma>1$.
\end{enumerate}

Let $\alpha>0$ be the exponent from Theorem~\ref{Main results theorem 6}. Put
%\begin{subequations}
\begin{equation}
\label{definition of nu_0}
\nu_0:=0,
\end{equation}
\begin{equation}
\label{definition of nu_n}
\nu_n:=n^\beta+c_n \,n^{-1}, \qquad n=1,2,\ldots,
\end{equation}
%\end{subequations}
where 
\begin{equation}
\label{definition of beta}
\beta:=\frac{1}{1+\frac{\alpha d}{s}}
\end{equation}
and the $c_n$ are some real numbers. Note that $\beta\in (0,1)$ and it can be made arbitrarily small by choosing $\alpha$ sufficiently large.

\begin{lemma}
\label{lemma monotone increasing}
If
\begin{equation}
\label{lemma monotone increasing equation 1}
c_n\in [-\beta/4,\beta/4] \quad \text{for all} \quad n=1,2,\ldots,
\end{equation}
then the $\nu_n$ form a strictly increasing sequence.
\end{lemma}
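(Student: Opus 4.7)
The plan is to verify strict monotonicity of $\{\nu_n\}_{n\ge0}$ by treating the transition $\nu_0 \to \nu_1$ separately from $\nu_n \to \nu_{n+1}$ for $n\ge 1$, and in the latter case balancing the main growth term $n^\beta$ against the small perturbation $c_n n^{-1}$.

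For the base step $n=0$, I would simply note that by \eqref{definition of nu_n} and \eqref{lemma monotone increasing equation 1}, $\nu_1 = 1 + c_1 \ge 1 - \beta/4$, which is strictly positive because $\beta \in (0,1)$, so $\nu_1 > \nu_0 = 0$.

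For $n\ge 1$, I would write
\[
\nu_{n+1} - \nu_n = \bigl[(n+1)^\beta - n^\beta\bigr] + \Bigl[\frac{c_{n+1}}{n+1} - \frac{c_n}{n}\Bigr]
\]
and estimate the two brackets in opposite directions. Since $x\mapsto x^\beta$ has derivative $\beta x^{\beta-1}$ which is positive and \emph{decreasing} on $(0,\infty)$ (because $\beta-1<0$), the mean value theorem gives $(n+1)^\beta - n^\beta \ge \beta (n+1)^{\beta-1}$. For the perturbation, the triangle inequality together with \eqref{lemma monotone increasing equation 1} yields
\[
\Bigl|\frac{c_{n+1}}{n+1} - \frac{c_n}{n}\Bigr| \le \frac{\beta}{4}\Bigl(\frac{1}{n+1}+\frac{1}{n}\Bigr) \le \frac{\beta}{2n}.
\]
The remaining step is the elementary comparison $\beta (n+1)^{\beta-1} > \beta/(2n)$, which reduces to $(n+1)^{1-\beta} < 2n$; this holds for every $n\ge 1$ since $1-\beta<1$ implies $(n+1)^{1-\beta} < n+1 \le 2n$. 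Combining these inequalities gives $\nu_{n+1} - \nu_n > 0$, completing the proof.

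There is no real obstacle here; the only mild subtlety is ensuring \emph{strict} inequality rather than just $\nu_{n+1}\ge\nu_n$, which is handled by the strict inequality $1-\beta<1$ used in the last comparison. The constant $\beta/4$ in the hypothesis \eqref{lemma monotone increasing equation 1} is chosen precisely to leave a factor of $2$ of slack in these bounds, so the argument goes through comfortably for all $n\ge 1$ without needing to treat small $n$ separately.
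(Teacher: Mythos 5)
Your proof is correct and follows essentially the same route as the paper's: treat $\nu_0<\nu_1$ separately, apply the Mean Value Theorem to $x\mapsto x^\beta$ to get a lower bound $\beta(n+1)^{\beta-1}$ on the main increment, bound the $c_n$ perturbation by $\beta/(2n)$, and compare. The paper's write-up is terser (it states the target inequality $n^\beta+\tfrac{\beta}{4n}<(n+1)^\beta-\tfrac{\beta}{4n}$ and invokes the MVT without spelling out the elementary comparison), but the content is identical.
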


\begin{proof}
Our choice of $\beta$ guarantees $\nu_0<\nu_1$. Therefore
it is enough to show that
\begin{equation}
\label{proof lemma monotone increasing 1}
n^\beta+\frac{\beta}{4n}<(n+1)^\beta-\frac{\beta}{4n}
\end{equation}
 for all $n=1,2,\ldots$.
 Clearly, \eqref{proof lemma monotone increasing 1} implies $\nu_n<\nu_{n+1}$, cf.~\eqref{definition of nu_n},  because $n^{-1}>(n+1)^{-1}$.
 
The inequality \eqref{proof lemma monotone increasing 1} is an immediate consequence of the Mean Value Theorem.
\end{proof}

Lemma~\ref{lemma monotone increasing} tells us that the sequence $\nu_n$
constructed in accordance with equations \eqref{definition of nu_0}--\eqref{lemma monotone increasing equation 1}
yields a partition of the positive semi-axis,
thus establishing  property (a).

Property (b) is established by the following Lemma.

\begin{lemma}
\label{lemma about achieving accuracy}
We have 
\begin{equation}
\label{lemma about achieving accuracy equation 1}
\nu_{n+1}-\nu_n =O(\nu_n^{-\frac{\alpha d}{s}}) \quad \text{as}\quad n\to+\infty.
\end{equation}
\end{lemma}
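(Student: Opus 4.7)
The plan is to exploit the fact that the defining formula $\nu_n = n^\beta + c_n n^{-1}$ is dominated, as $n\to\infty$, by its leading term $n^\beta$, since $\beta>0$ forces $n^\beta\to\infty$ while the correction is bounded by $\beta/(4n)\to 0$. In particular, $\nu_n = n^\beta(1+o(1))$, so the right-hand side of \eqref{lemma about achieving accuracy equation 1} satisfies
\[
\nu_n^{-\alpha d/s} = n^{-\beta\alpha d/s}(1+o(1)).
\]

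First I would estimate the main contribution $(n+1)^\beta - n^\beta$. By the Mean Value Theorem applied to $t\mapsto t^\beta$, there exists $\theta_n\in(n,n+1)$ with $(n+1)^\beta - n^\beta = \beta\,\theta_n^{\beta-1}$, which yields
\[
(n+1)^\beta - n^\beta = \beta\, n^{\beta-1}\bigl(1+o(1)\bigr).
\]
Next I would bound the contribution from the low-order terms in \eqref{definition of nu_n}: using $|c_n|,|c_{n+1}|\le\beta/4$,
\[
\left|\frac{c_{n+1}}{n+1}-\frac{c_n}{n}\right| \le \frac{\beta/4}{n+1}+\frac{\beta/4}{n} = O(n^{-1}).
\]
Since $\beta\in(0,1)$ forces $\beta-1>-1$, the main term $n^{\beta-1}$ dominates $n^{-1}$, so combining the two bounds gives
\[
\nu_{n+1}-\nu_n = O(n^{\beta-1}).
\]

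Finally, I would verify that the exponent matches. By the definition \eqref{definition of beta}, $\beta(1+\alpha d/s)=1$, equivalently $\beta-1=-\beta\alpha d/s$. Therefore $O(n^{\beta-1})=O(n^{-\beta\alpha d/s})=O(\nu_n^{-\alpha d/s})$, which is exactly \eqref{lemma about achieving accuracy equation 1}.

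There is no genuine obstacle here; the lemma is essentially a bookkeeping calculation, and the role of the specific value of $\beta$ in \eqref{definition of beta} is precisely to tune the Mean Value Theorem estimate to the prescribed remainder rate. The perturbation $c_n n^{-1}$ is built into \eqref{definition of nu_n} only to allow room in property~(c) to avoid the eigenvalues, and the restriction \eqref{lemma monotone increasing equation 1} is more than enough to keep these perturbations negligible for the present estimate.
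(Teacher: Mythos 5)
Your proof is correct and follows essentially the same route as the paper: estimate $\nu_{n+1}-\nu_n=O(n^{\beta-1})$ from the dominant term $n^\beta$, then convert the exponent using the defining relation for $\beta$ in \eqref{definition of beta} (you write it as $\beta-1=-\beta\alpha d/s$, the paper as $1-\tfrac1\beta=-\tfrac{\alpha d}{s}$; these are equivalent). You spell out the bound on the $c_n n^{-1}$ correction a bit more explicitly than the paper does, but the argument is the same.
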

\begin{proof}
Formula \eqref{definition of nu_n} implies
\begin{equation}
\label{proof lemma about achieving accuracy equation 1}
\nu_{n+1}-\nu_n=O(n^{\beta-1})=O((\nu_n^{1/\beta})^{\beta-1})=O\bigl(\nu_n^{1-\frac1\beta}\bigr)\quad \text{as}\quad n\to+\infty.
\end{equation}
Combining \eqref{proof lemma about achieving accuracy equation 1} and \eqref{definition of beta} we obtain \eqref{lemma about achieving accuracy equation 1}.
\end{proof}

Suppose that $\lambda_k\in(\nu_n,\nu_{n+1}]$.
Using Theorem~\ref{Weyl asymptotics for elliptic systems theorem} we obtain
\begin{equation*}
\label{proof main theorem 6 equation 4bis extra}
\nu_n=b^{-s/d}k^{s/d}+o(k^{s/d})\quad\text{as}\quad k\to+\infty.
\end{equation*}
This gives us a different take on the statement of
Lemma~\ref{lemma about achieving accuracy} in that 
it allows us to equivalently recast 
\eqref{lemma about achieving accuracy equation 1}
as 
\begin{equation}
\label{proof main theorem 6 equation 5 extra}
\nu_{n+1}-\nu_n=O(k^{-\alpha}) \quad\text{as}\quad k\to+\infty.
\end{equation}

Finally, the following Lemma establishes that the $c_n$ can be chosen in such a way that our partition possesses property (c).

\begin{lemma}
\label{lemma about existence of gamma}
There exist constants $\gamma>1$ and $C>0$
such that, for a suitable choice of $c_n$ in \eqref{definition of nu_n} compatible with condition
\eqref{lemma monotone increasing equation 1}, we have
\begin{equation}
\label{lemma about existence of gamma equation 1} 
\operatorname{dist}\bigl(\nu_n\,,\textstyle \bigcup_{j=1}^{m^+}\sigma^+(A_j)\cup \sigma^+(A)\bigr)
\ge
Cn^{-\gamma}
\end{equation}
for all $n=1,2,\dots$.
\end{lemma}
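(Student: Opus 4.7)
The plan is to pick $\nu_n$ inside the admissible interval $I_n := [n^\beta - \beta/(4n),\, n^\beta + \beta/(4n)]$, which has length $\beta/(2n)$ and corresponds precisely to $c_n \in [-\beta/4,\beta/4]$ via $c_n = n(\nu_n - n^\beta)$, so as to avoid small neighbourhoods of the relevant eigenvalues. The essential tool is Weyl's law, which applies to each of $A$ and the $A_j$ because every $A_j$ is self-adjoint (as a real linear combination of the self-adjoint operators $A$ and $P_l^*AP_l$) and elliptic (its principal symbol \eqref{Aj prin} has no zero eigenvalues).

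First, I would fix $\gamma > 1 + \beta d/s$; this automatically gives $\gamma > 1$, as the lemma requires. For each $n$, set
\[
E_n := \bigl(\textstyle\bigcup_{j=1}^{m^+}\sigma^+(A_j) \cup \sigma^+(A)\bigr) \cap I_n.
\]
Each spectrum in the union is discrete and accumulates only at $+\infty$, so $E_n$ is finite. Applying Weyl's law (Theorem~\ref{Weyl asymptotics for elliptic systems theorem}) to each of the $m^++1$ operators, the counting function satisfies $N(\lambda) = O(\lambda^{d/s})$; since $\sup I_n \le 2n^\beta$ for $n$ large, this produces a uniform constant $C_1>0$ with $|E_n| \le C_1\, n^{\beta d/s}$ for every $n\ge 1$.

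Next, for a constant $C>0$ to be fixed, define the forbidden set
\[
B_n(C) := I_n \cap \bigcup_{\lambda\in E_n}\bigl[\lambda - Cn^{-\gamma},\, \lambda + Cn^{-\gamma}\bigr],
\]
whose Lebesgue measure satisfies $|B_n(C)| \le 2CC_1\, n^{\beta d/s - \gamma}$. Because $\gamma > 1 + \beta d/s$, we have $n^{\beta d/s - \gamma} = o(n^{-1})$, hence $|B_n(C)| < |I_n| = \beta/(2n)$ for all $n \ge N_0$, where $N_0$ depends only on $C$ and on the fixed parameters $\alpha,s,d,m^+$. For such $n$ the complement $I_n \setminus B_n(C)$ is non-empty, and choosing any $\nu_n$ in it yields \eqref{lemma about existence of gamma equation 1} with the declared constants; the corresponding $c_n$ lies in $[-\beta/4,\beta/4]$ by construction.

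Finally, for the finitely many exceptional indices $n=1,\dots,N_0-1$, I would choose any $c_n \in [-\beta/4,\beta/4]$ for which $\nu_n \notin \sigma^+(A)\cup\bigcup_j\sigma^+(A_j)$ --- this is possible because the right-hand side is countable while $I_n$ is uncountable --- and set $d_n := \operatorname{dist}\bigl(\nu_n,\,\sigma^+(A)\cup\bigcup_j\sigma^+(A_j)\bigr)>0$. Replacing $C$ by the strictly positive quantity $\min\{C,\,1^\gamma d_1,\,2^\gamma d_2,\dots,(N_0-1)^\gamma d_{N_0-1}\}$ produces a single constant valid for every $n\ge 1$. The main technical obstacle is getting the exponent balance right: the product (number of eigenvalues in $I_n$) $\times$ (width of each forbidden neighbourhood) must be small compared to $|I_n|\sim n^{-1}$, and the inequality $\gamma > 1 + \beta d/s$ is precisely what Weyl's law requires for this. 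It is crucial here that the Weyl exponent $d/s$ is the same for $A$ and for each $A_j$; everything else reduces to a measure-theoretic pigeonhole argument on the line.
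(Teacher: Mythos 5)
Your proof is correct and follows essentially the same pigeonhole strategy as the paper's: bound the number of relevant eigenvalues in the admissible interval $I_n$ via Weyl's law, observe that the total measure of the $Cn^{-\gamma}$-neighbourhoods around them is $o(|I_n|)$, and pick $\nu_n$ outside this forbidden set. The only cosmetic difference is that you bound $|E_n|$ by the crude total count $O(n^{\beta d/s})$ and therefore take $\gamma$ strictly greater than $1+\beta d/s$, whereas the paper counts only eigenvalues inside the window to get $o(n^{\beta d/s})$ and uses $\gamma = 1+\beta d/s$; both yield a valid $\gamma>1$, and your explicit remarks that the $A_j$ are self-adjoint and elliptic with the same Weyl exponent as $A$ correctly fill in prerequisites the paper leaves implicit.
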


\begin{proof}
In order to prove the lemma, it is enough to estimate from below the size of the largest gap in the set
\begin{equation}
\left(\textstyle \bigcup_{j=1}^{m^+}\sigma^+(A_j)\cup \sigma^+(A)\right)\cap\left[\nu_n-\tfrac\beta{4n},\nu_n+\tfrac\beta{4n}\right].
\end{equation}
To begin with, let us estimate from above the number of eigenvalues we can have in the interval $[\nu_n-\frac\beta{4n}, \nu_n+\frac\beta{4n}]$.

Theorem~\ref{Weyl asymptotics for elliptic systems theorem} tells us that
\begin{equation}
\label{proof lemma about existence of gamma equation 1}
N\left(\nu_n;\tfrac\beta{4n}\right)+\widetilde{N}\left(\nu_n;\tfrac\beta{4n}\right)
=o\bigl(n^{\frac{\beta d}{s}}\bigr).
\end{equation}

The quantity
\begin{equation*}
\label{proof lemma about existence of gamma equation 3}
\sup_{x\in \left[\nu_n-\tfrac\beta{4n}\,,\,\nu_n+\tfrac\beta{4n}\right]} \operatorname{dist}\left(x, \left(\textstyle \bigcup_{j=1}^{m^+}\sigma^+(A_j)\cup \sigma^+(A)\right)\cap\left[\nu_n-\tfrac\beta{4n},\nu_n+\tfrac\beta{4n}\right] \right)
\end{equation*}
is minimised when the eigenvalues $\lambda_k$ and $\mu_k$ are equidistributed in $\left[\nu_n-\tfrac\beta{4n},\nu_n+\tfrac\beta{4n}\right]$. As we are looking at an interval of length $\tfrac\beta{2n}\,$, formula
\eqref{proof lemma about existence of gamma equation 1} implies that we can choose $c_n$ such that
\eqref{lemma about existence of gamma equation 1} holds for
\begin{equation}
\label{admissible gammas}
\gamma= 1+\frac{d\beta}s\,.
\end{equation}
\end{proof}

The construction of our partition is summarised in Figure~\ref{figure partition}.

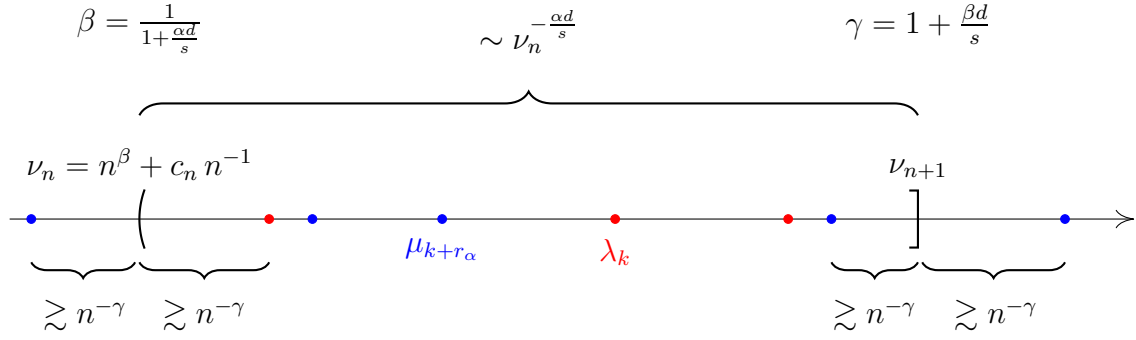
\begin{figure}
\centering
\begin{tikzpicture}[scale=1.15]

% axis
\draw[-{>[scale=3.5,
          length=2,
          width=2]}
] (0,0)--(13,0);

% round bracket
\draw [black,thick,domain=160:200] plot ({1*cos(\x)+2.5}, {1*sin(\x)});

% square bracket
\draw [thick] (10.4,-.33)--(10.5,-.33)--(10.5,.33)--(10.4,.33);

% nu's
\node [above] at (1.5,.33) {$\nu_{n}=n^\beta+c_n\,n^{-1}$};
\node [above] at (10.5,.33) {$\nu_{n+1}$};

% eigenvalues
\draw[fill,red] (3,0) circle [radius=0.05];
\draw[fill,red] (7,0) circle [radius=0.05];
\draw[fill,red] (9,0) circle [radius=0.05];

\node [below,red] at (7,-.1) {$\lambda_k$};

\draw[fill,blue] (.25,0) circle [radius=0.05];
\draw[fill,blue] (3.5,0) circle [radius=0.05];
\draw[fill,blue] (5,0) circle [radius=0.05];
\draw[fill,blue] (9.5,0) circle [radius=0.05];
\draw[fill,blue] (12.2,0) circle [radius=0.05];

\node [below,blue] at (5,-.1) {$\mu_{k+r_\alpha}$} ;

% distance from spectrum
\draw [thick, decorate,decoration={brace,amplitude=6pt,mirror}] (.25,-.5) -- (1.45,-.5);
\node [below] at (.87,-.8) {$\gtrsim n^{-\gamma}$};
\draw [thick, decorate,decoration={brace,amplitude=6pt,mirror}] (1.5,-.5) -- (3,-.5);
\node [below] at (2.25,-.8) {$\gtrsim n^{-\gamma}$};

\draw [thick, decorate,decoration={brace,amplitude=6pt,mirror}] (9.5,-.5) -- (10.5,-.5);
\node [below] at (10,-.8) {$\gtrsim n^{-\gamma}$};
\draw [thick, decorate,decoration={brace,amplitude=6pt,mirror}] (10.55,-.5) -- (12.2,-.5);
\node [below] at (11.4,-.8) {$\gtrsim n^{-\gamma}$};

% size of the interval
\draw [thick, decorate,decoration={brace,amplitude=9pt}] (1.5,1.2) -- (10.5,1.2);
\node [above] at (6,1.8) {$\sim \nu_n^{-\frac{\alpha d}{s}}$};

% formulae for beta and gamma
\node [above] at (1.5,1.8) {$\beta=\frac{1}{1+\frac{\alpha d}{s}}$};
\node [above] at (10.5,1.9) {$\gamma=1+\frac{\beta d}{s}$};

\end{tikzpicture}
\caption{Construction of the partition of the positive semi-axis%, with $\beta=\frac{1}{1+\frac{\alpha d}{s}}$ and $\gamma=1+\frac{\beta d}{s}$
}
\label{figure partition}
\end{figure}

\subsubsection{Preparatory lemmata}
\label{Preparatory lemmata}

We will now state and prove a few simple lemmata.

\begin{lemma}
\label{lemma about bound on number of ev in interval}
Let
$
\mu_1\le\mu_2\le\dots\le \mu_r
$
be real numbers and let $u_k$, $k=1,\ldots,r$, be an orthonormal set in $L^2(M)$. 
Suppose that
\begin{equation}
\label{lemma about bound on number of ev in interval equation 2}
\|(A-\mu_k)u_k\|_{L^2}\le \varepsilon, \qquad k=1,\ldots,r.
\end{equation}
Then 
\begin{equation}
\label{lemma about bound on number of ev in interval equation 3}
\# \{\lambda\ |\ \lambda\in \sigma(A)\cap [\mu_1-\sqrt{r}\,\varepsilon,\mu_r+\sqrt{r} \,\varepsilon] \} \ge  r.
\end{equation}
\end{lemma}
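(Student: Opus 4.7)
The plan is to reduce the counting statement to the variational (min--max) characterization of the eigenvalue counting function, applied not to $A$ itself but to the non-negative self-adjoint operator $(A-\mu)^2$. Since the spectrum of $(A-\mu)^2$ is $\{(\lambda-\mu)^2:\lambda\in\sigma(A)\}$, the min--max principle yields, for any $\mu\in\mathbb{R}$ and any $\eta>0$,
\[
\#\bigl(\sigma(A)\cap[\mu-\eta,\mu+\eta]\bigr)
=\max\bigl\{\dim W\ :\ W\subseteq L^2(M),\ \|(A-\mu)v\|_{L^2}\le\eta\,\|v\|_{L^2}\ \forall v\in W\bigr\}.
\]
It therefore suffices to exhibit an $r$-dimensional test subspace on which this quadratic inequality holds, for a well-chosen pair $(\mu,\eta)$.

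The natural choice is $\mu:=(\mu_1+\mu_r)/2$ and $\eta:=(\mu_r-\mu_1)/2+\sqrt{r}\,\varepsilon$, so that $[\mu-\eta,\mu+\eta]$ coincides with the target interval $[\mu_1-\sqrt{r}\,\varepsilon,\mu_r+\sqrt{r}\,\varepsilon]$, together with the test subspace $W:=\operatorname{span}\{u_1,\dots,u_r\}$, which is $r$-dimensional by the orthonormality hypothesis. For an arbitrary $v=\sum_{k=1}^r c_k u_k\in W$, I would decompose
\[
(A-\mu)v=\sum_{k=1}^r c_k\,(A-\mu_k)u_k\;+\;\sum_{k=1}^r c_k\,(\mu_k-\mu)\,u_k
\]
and apply the triangle inequality. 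Cauchy--Schwarz together with the hypothesis $\|(A-\mu_k)u_k\|_{L^2}\le\varepsilon$ bounds the first sum by $\sqrt{r}\,\varepsilon\,\|v\|_{L^2}$; orthonormality of the $u_k$ together with the elementary estimate $|\mu_k-\mu|\le(\mu_r-\mu_1)/2$ bounds the second sum by $\tfrac{\mu_r-\mu_1}{2}\,\|v\|_{L^2}$. Summing these, $\|(A-\mu)v\|_{L^2}\le\eta\,\|v\|_{L^2}$, as required.

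The main conceptual step is the reformulation via $(A-\mu)^2$, which turns a counting statement into a Rayleigh-quotient bound accessible by elementary means. Once this is in place there is no real obstacle: the choice of $\mu$ as the midpoint of $[\mu_1,\mu_r]$ is forced by the symmetric form of the target interval and is precisely what makes the two-piece bound come out as $\sqrt{r}\,\varepsilon+(\mu_r-\mu_1)/2=\eta$, and no delicate cancellation between the two pieces of $(A-\mu)v$ needs to be exploited --- the bounds simply add.
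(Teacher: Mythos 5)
Your proof is correct and is essentially the paper's argument in variational (min--max) clothing: you both centre the interval, split $(A-\mu)v$ into the ``error'' piece controlled by Cauchy--Schwarz and the ``spread'' piece controlled by orthonormality, and conclude. The paper phrases the conclusion as a contradiction (a unit vector in $\operatorname{span}\{u_k\}$ orthogonal to the relevant eigenspace would violate $\|Au\|>\mu_r+\sqrt{r}\,\varepsilon$), whereas you invoke the min--max principle for $(A-\mu)^2$ directly; these are two presentations of the same variational idea.
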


\begin{proof}
Without loss of generality, we can assume that $\mu_r \ge 0$ and $\mu_1=-\mu_r$. Arguing by contradiction, suppose \eqref{lemma about bound on number of ev in interval equation 3} is not true. 
Then one can choose a $u\in L^2(M)$ such that
\begin{equation}
\label{proof lemma about bound on number of ev in interval equation 1}
u=\sum_{k=1}^r a_k \,u_k,
\end{equation}
\begin{equation}
\label{proof lemma about bound on number of ev in interval equation 2}
\sum_{k=1}^r |a_k|^2=1,
\end{equation}
orthogonal to the eigenfunctions of $A$ corresponding to eigenvalues in $[-\mu_r-\sqrt{r}\,\varepsilon,\mu_r+\sqrt{r} \,\varepsilon]$.

On the one hand, the Spectral Theorem implies
\begin{equation}
\label{proof lemma about bound on number of ev in interval equation 3}
\|Au\|_{L^2}> \mu_r+\sqrt{r}\,\varepsilon.
\end{equation}

On the other hand, using formulae \eqref{lemma about bound on number of ev in interval equation 2}, \eqref{proof lemma about bound on number of ev in interval equation 1}, \eqref{proof lemma about bound on number of ev in interval equation 2}, the triangle inequality in $L^2(M)$ and the Cauchy--Schwarz inequality in $\mathbb{C}^r$, we obtain
\begin{equation}
\label{proof lemma about bound on number of ev in interval equation 4}
\begin{split}
\|Au \|_{L^2} 
&
\le 
\left\| \sum_{k=1}^r a_k (A-\mu_k)u_k \right\|_{L^2}+\left\|\sum_{k=1}^r a_k \mu_k u_k\right\|_{L^2}
\\
&
\le 
\sqrt{r}\, \varepsilon+ \mu_r.
\end{split}
\end{equation}
Formulae \eqref{proof lemma about bound on number of ev in interval equation 3} and \eqref{proof lemma about bound on number of ev in interval equation 4} give us a contradiction.
\end{proof}

\begin{lemma}
\label{lemma about almost orthogonality}
Let $u_k^{(j)}$ be a normalised eigenfunction of $A_j$ corresponding to the eigenvalue $\lambda_k^{(j)}$.
Then
\begin{equation}
\label{lemma about almost orthogonality equation 1}
\langle u_k^{(j)},u_{k'}^{(l)} \rangle= O(n^{-\infty})
\end{equation}
for $l\ne j$ and for all $k,k'$ such that $\lambda_{k}^{(j)}, \lambda_{k'}^{(l)}\in (\nu_n,\nu_{n+1}]$.
\end{lemma}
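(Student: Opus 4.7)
My plan is to exploit the microlocal concentration of each eigenfunction of $A_j$ (respectively $A_l$) into the range of $P_j$ (respectively $P_l$), established in Proposition~\ref{proposition on action of projections on eigenfunctions}(b), together with the almost-orthogonality $P_jP_l=0\mod\Psi^{-\infty}$ for $j\ne l$ from Theorem~\ref{theorem results from part 1}(a)(iii).

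First I would use Theorem~\ref{theorem results from part 1}(a)(iv), \eqref{proposition on action of projections on eigenfunctions equation 2} and \eqref{Proof Main results theorem 4 equation 6} to write
\[
u_k^{(j)}=P_j u_k^{(j)}+O(k^{-\infty}),\qquad u_{k'}^{(l)}=P_l u_{k'}^{(l)}+O((k')^{-\infty}),
\]
where both remainders are controlled in the strongest possible sense, and in particular in $L^2$. Substituting and using Theorem~\ref{theorem results from part 1}(a)(ii)--(iii) to note that $P_j^*P_l=P_jP_l=0\mod\Psi^{-\infty}$ is itself a smoothing operator, I would obtain
\[
\langle u_k^{(j)},u_{k'}^{(l)}\rangle
=\langle u_k^{(j)},(P_j^*P_l)\,u_{k'}^{(l)}\rangle+O(k^{-\infty})+O((k')^{-\infty})
=O(k^{-\infty})+O((k')^{-\infty}),
\]
where the last equality follows by applying \eqref{Proof Main results theorem 4 equation 6} once more and Cauchy--Schwarz against the unit vector $u_k^{(j)}$.

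Finally, I would convert decay in $k$ and $k'$ into decay in $n$. Weyl's law \eqref{Proof Main results theorem 4 equation 3} gives $\lambda_k^{(j)}\sim c_j\,k^{s/d}$; combined with the hypothesis $\lambda_k^{(j)},\lambda_{k'}^{(l)}\in(\nu_n,\nu_{n+1}]$ and the definition \eqref{definition of nu_n}, this yields $k,k'\gtrsim n^{\beta d/s}$ for large $n$. Since $\beta d/s>0$, any negative power of $k$ or $k'$ is absorbed into a negative power of $n$, which establishes \eqref{lemma about almost orthogonality equation 1}. The whole argument is essentially a bookkeeping exercise assembling remainder estimates already produced in subsection~\ref{Separating positive eigenvalues into $m^+$ distinct series}, and I do not foresee any genuine analytic obstacle; the only mildly delicate point is the final passage from $k^{-\infty}$ to $n^{-\infty}$ via Weyl's law, which relies only on the fact that $\nu_n$ grows polynomially in $n$.
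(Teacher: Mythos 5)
Your proof is correct and follows essentially the same route as the paper's: reduce $u_k^{(j)}$ and $u_{k'}^{(l)}$ to their $P_j$- and $P_l$-components via Proposition~\ref{proposition on action of projections on eigenfunctions}(b), use $P_j^*P_l=0\mod\Psi^{-\infty}$ from Theorem~\ref{theorem results from part 1}(a), and then convert $O(k^{-\infty})$ and $O((k')^{-\infty})$ into $O(n^{-\infty})$ by Weyl's law. The paper compresses all of this into a single chain of inner products, leaving the $k\leftrightarrow n$ conversion implicit, whereas you spell it out, which is a welcome clarification rather than a genuine difference in approach.
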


\begin{proof}
Using Proposition~\ref{proposition on action of projections on eigenfunctions}(b) and properties of pseudodifferential projections we get
\begin{equation*}
\label{proof lemma about almost orthogonality equation 1}
\langle u_k^{(j)},u_{k'}^{(l)} \rangle=\langle P_j u_k^{(j)},P_l u_{k'}^{(l)} \rangle+O(n^{-\infty})=\langle P_lP_j u_k^{(j)},u_{k'}^{(l)} \rangle+O(n^{-\infty})=O(n^{-\infty}).
\end{equation*}
\end{proof}

\begin{lemma}
\label{lemma numeric inequality}
Let $a_k, b_k$, $k=1,\ldots, r$, be nonnegative real numbers. Suppose that
\begin{equation}
\label{lemma numeric inequality equation 1}
\sum_{k=1}^r a_k \le C
\end{equation}
and
\begin{equation}
\label{lemma numeric inequality equation 2}
\sum_{k=1}^r b_n=1.
\end{equation}
Then there exists a $\widetilde{k}\in \{1,\ldots, r\}$ such that
\begin{equation}
\label{lemma numeric inequality equation 3}
a_{\widetilde{k}}\le C\, b_{\widetilde{k}}.
\end{equation}
\end{lemma}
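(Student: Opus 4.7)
The plan is a one-line pigeonhole/averaging argument by contradiction, which is in fact the only thing the statement can be, given that both hypotheses are just scalar bounds on sums. I would suppose, for contradiction, that the conclusion fails, so that $a_k > C\, b_k$ for every $k \in \{1,\dots,r\}$. Summing these strict inequalities over $k$ and invoking the normalisation \eqref{lemma numeric inequality equation 2} yields
\[
\sum_{k=1}^r a_k \;>\; C \sum_{k=1}^r b_k \;=\; C,
\]
which directly contradicts the hypothesis \eqref{lemma numeric inequality equation 1}. Hence there must exist some index $\widetilde{k}\in\{1,\dots,r\}$ for which \eqref{lemma numeric inequality equation 3} holds.

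There is really no obstacle here: the only micro-point to verify is that the sum of strictly greater inequalities is still strictly greater, which is immediate for a finite, nonempty index set, and the case $r=0$ is vacuous. The $b_k=0$ entries pose no difficulty either, since assuming $a_k > C\cdot 0 = 0$ for such $k$ contributes nonnegatively to the sum and strictness is preserved by any index $k$ with $b_k>0$ (at least one such index must exist because $\sum_k b_k=1$). The lemma is clearly intended as a purely combinatorial gadget to be combined with later counting estimates on the partition of subsubsection~\ref{Partition of the positive semi-axis}: the role of $a_k$ will presumably be played by an error-type quantity summing to a controllable constant $C$, while $b_k$ will be a probability-weight distribution (perhaps derived from Weyl asymptotics) that selects an index where the error is bounded by its share of the weight.
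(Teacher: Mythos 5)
Your argument is correct and coincides exactly with the paper's own proof: assume $a_k > C\,b_k$ for all $k$, sum, use $\sum_k b_k = 1$ to obtain $\sum_k a_k > C$, contradiction. The additional remarks on $r=0$ and on indices with $b_k=0$ are fine but not needed.
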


\begin{proof}
Suppose 
\begin{equation}
\label{proof lemma numeric inequality equation 1}
a_{k}> C\, b_{k}\qquad \text{for all}\quad k=1,\ldots,r. 
\end{equation}
Then summing both sides of \eqref{proof lemma numeric inequality equation 1} over $k$ and using \eqref{lemma numeric inequality equation 2} we obtain
%\begin{equation}
%\label{proof lemma numeric inequality equation 2}
$
\sum_{k=1}^r a_k> C,
$
%\end{equation}
which contradicts \eqref{lemma numeric inequality equation 1}.
\end{proof}

In the proof of Theorem~\ref{Main results theorem 6} we will need to construct a set of orthonormal functions out of a set of $r$ functions which are only approximately orthonormal, up to an error that decays superpolynomially. The Gram--Schmidt process, as well as its standard modifications, yields a number of terms growing factorially with $r$, thus resulting in an overall error that is too big for our purposes.
The following lemma will give us an orthonormalisation procedure which circumvents  this shortcoming.

\begin{lemma}
\label{lemma about orthonormalisation}
Let $F$ be an Hermitian $r\times r$ matrix such that
\begin{equation}
\label{lemma about orthonormalisation equation 1}
\|F-I\|_{\max}\le \frac{1}{3r^2}\,,
\end{equation}
where $I$ is the $r\times r$ identity matrix and $\|F\|_{\max}:=\max_{1\le j,k \le r}|F_{jk}|$ is the max matrix norm. 
Then there exists an Hermitian matrix $G$ such that
\begin{equation}
\label{lemma about orthonormalisation equation 2}
GFG=I
\end{equation}
and
\begin{equation}
\label{lemma about orthonormalisation equation 3}
\|G-I\|_{\max}\le \|F-I\|_{\max}.
\end{equation}
\end{lemma}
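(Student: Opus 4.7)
The obvious candidate is $G:=F^{-1/2}$, the Hermitian principal inverse square root of $F$; since $GFG=F^{-1/2}FF^{-1/2}=I$, \eqref{lemma about orthonormalisation equation 2} is automatic and $G$ inherits Hermiticity from $F$. I first have to verify that $F^{-1/2}$ is well defined, i.e.\ that $F$ is positive definite. Setting $E:=F-I$, the crude comparisons $\|E\|_{\mathrm{op}}\le\|E\|_F\le r\|E\|_{\max}$ together with \eqref{lemma about orthonormalisation equation 1} give
\begin{equation*}
\|E\|_{\mathrm{op}}\le \tfrac{1}{3r}\le \tfrac13,
\end{equation*}
so the spectrum of $F$ lies in $[2/3,4/3]$. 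In this regime $G$ admits the norm-convergent binomial expansion
\begin{equation*}
G-I \,=\, \sum_{n=1}^{\infty}\binom{-1/2}{n}E^n.
\end{equation*}

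\textbf{Entrywise estimate.} The quantitative bound \eqref{lemma about orthonormalisation equation 3} is the real content; here operator-norm control is too crude and I have to track the max norm through the series. The key elementary inequality is the sub-multiplicativity
\begin{equation*}
\|AB\|_{\max}\le r\,\|A\|_{\max}\|B\|_{\max},
\end{equation*}
which follows at once from $|(AB)_{jk}|\le\sum_l|A_{jl}||B_{lk}|$. Iterating gives $\|E^n\|_{\max}\le r^{n-1}\|E\|_{\max}^n$ for $n\ge 1$. On the combinatorial side, the ratio identity $\binom{-1/2}{n+1}/\binom{-1/2}{n}=-(n+\tfrac12)/(n+1)$ shows $|\binom{-1/2}{n}|$ is strictly decreasing in $n\ge 1$, so $|\binom{-1/2}{n}|\le|\binom{-1/2}{1}|=1/2$ for every $n\ge 1$. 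Combining these and using $r\|E\|_{\max}\le 1/(3r)\le 1/3$,
\begin{equation*}
\|G-I\|_{\max}\le \tfrac12\sum_{n=1}^{\infty}r^{n-1}\|E\|_{\max}^n \,=\, \frac{\|E\|_{\max}}{2(1-r\|E\|_{\max})}\le \tfrac34\|E\|_{\max},
\end{equation*}
which is even stronger than \eqref{lemma about orthonormalisation equation 3}.

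\textbf{Main obstacle.} There is no deep obstruction; the only subtlety is that the factor $r$ in the sub-multiplicativity of $\|\cdot\|_{\max}$ forces one to demand $r\|E\|_{\max}$ small, and this is precisely what motivates the scaling $1/(3r^2)$ in hypothesis \eqref{lemma about orthonormalisation equation 1}: it simultaneously secures positive definiteness of $F$ (hence well-definedness of $F^{-1/2}$) and renders the geometric tail negligible, letting the $n=1$ contribution $-\tfrac12 E$ dictate the final bound. No Gram--Schmidt-type iterative scheme is needed, which is exactly the virtue of $G=F^{-1/2}$: the orthonormalising transformation is produced in one stroke by a functional calculus expression, bypassing the factorial blow-up of successive orthogonalisation steps.
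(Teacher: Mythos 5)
Your proof is correct and takes essentially the same route as the paper: defining $G:=F^{-1/2}$ via the binomial series in $R=F-I$ and tracking the max norm through the series using its weighted sub-multiplicativity. Your estimates are marginally sharper (you use $\|E^n\|_{\max}\le r^{n-1}\|E\|_{\max}^n$ and $|\binom{-1/2}{n}|\le\tfrac12$ for all $n\ge1$, giving the constant $\tfrac34$, whereas the paper splits off only the $n=1$ term and bounds the tail more crudely), but the substance is identical.
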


\begin{proof}
It is easy to see that
under the condition \eqref{lemma about orthonormalisation equation 1}
the matrix $F$ is positive definite.
Put 
\begin{equation}
\label{proof lemma about orthonormalisation equation 1}
R:=F-I
\end{equation}
and define
\begin{equation}
\label{proof lemma about orthonormalisation equation 2}
G:=F^{-1/2}=\sum_{k=0}^{\infty} \begin{pmatrix}
-\frac12\\ k
\end{pmatrix} R^k.
\end{equation}
The series on the RHS of \eqref{proof lemma about orthonormalisation equation 2} converges
in the max matrix norm
as soon as $\|R\|_{\max}<r^{-1}$, which is guaranteed by \eqref{lemma about orthonormalisation equation 1}. Of course, \eqref{proof lemma about orthonormalisation equation 2} implies \eqref{lemma about orthonormalisation equation 2}.

Formulae \eqref{proof lemma about orthonormalisation equation 2}, \eqref{proof lemma about orthonormalisation equation 1} and \eqref{lemma about orthonormalisation equation 1} imply
\begin{equation*}
\label{proof lemma about orthonormalisation equation 3}
\begin{split}
\|G-I\|_{\max}
&
\le 
\sum_{k=1}^{\infty} \left|\begin{pmatrix}
-\frac12\\ k
\end{pmatrix}\right| \left\|R^k\right\|_{\max}
\le
\frac12 \left\|R\right\|_{\max}
+
\sum_{k=2}^{\infty} r^k \left\|R\right\|_{\max}^k
\\
&
\le
\frac12 \left\|R\right\|_{\max}
+
\frac{r^2 \left\|R\right\|_{\max}^2}{1-r\left\|R\right\|_{\max}}
\le
\frac12 \left\|R\right\|_{\max}
+
\frac{\frac1{3}\left\|R\right\|_{\max}}{1-\frac1{3r}}
%\\
%&
%\le
%\frac12 \left\|R\right\|_{\max}
%+
%\frac19 \left\|R\right\|_{\max}
%\\
%&
\le
\left\|R\right\|_{\max}.
\end{split}
\end{equation*}
In the above calculation we used the weighted submultiplicative property of the max matrix norm, $\|R^k\|_{\max}\le r^k\|R\|_{\max}^k$.
\end{proof}

Lastly, we recall for the reader's convenience a fact from elementary functional analysis.

\begin{lemma}
\label{lemma about spectrum}
Let $B$ be a self-adjoint operator in a Hilbert space $(H, \|\cdot\|_H)$ with discrete spectrum.
Let $R$ be a positive number.
Let $\xi_k$, $k=1,\dots,r$, be all the eigenvalues of $B$ on the closed interval $[-R,R]$,
and let $v_k$ be the corresponding eigenfunctions.
Then for any $v\in D(B)$, $v\ne0$, satisfying
\begin{equation}
\label{lemma about spectrum equatoin 1}
\langle v_k,v\rangle_H=0,\qquad k=1,\dots,n,
\end{equation}
we have
\begin{equation}
\label{lemma about spectrum equatoin 2}
\|Bv\|_H>R\|v\|_H.
\end{equation}
\end{lemma}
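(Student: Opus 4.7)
The proof is a routine application of the spectral theorem for self-adjoint operators with discrete spectrum. The plan is as follows.

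First I would fix an orthonormal basis $\{\phi_\ell\}_{\ell\in\mathbb{N}}$ of $H$ consisting of eigenfunctions of $B$, with corresponding real eigenvalues $\{\mu_\ell\}$; such a basis exists because $B$ is self-adjoint with discrete spectrum. Without loss of generality, the given family $\{v_1,\dots,v_r\}$ may be taken to coincide with the sub-collection of basis vectors whose eigenvalues lie in $[-R,R]$, so that every remaining $\phi_\ell$ satisfies $|\mu_\ell|>R$.

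Next I would expand any $v\in D(B)$ satisfying \eqref{lemma about spectrum equatoin 1} as $v=\sum_\ell c_\ell\,\phi_\ell$. The orthogonality assumption forces $c_\ell=0$ for every $\phi_\ell\in\{v_1,\dots,v_r\}$, so the expansion is supported on the complementary set of indices. Parseval's identity, combined with the condition $v\in D(B)$ which guarantees that $Bv\in H$, then yields
$$
\|v\|_H^2=\sum_{\ell\,:\,|\mu_\ell|>R}|c_\ell|^2,
\qquad
\|Bv\|_H^2=\sum_{\ell\,:\,|\mu_\ell|>R}|\mu_\ell|^2\,|c_\ell|^2.
$$

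Finally, since $|\mu_\ell|^2>R^2$ for every index appearing in the sums above, and the hypothesis $v\ne 0$ ensures that at least one coefficient $c_\ell$ is nonzero, one obtains the strict bound $\|Bv\|_H^2>R^2\,\|v\|_H^2$, which is \eqref{lemma about spectrum equatoin 2}. I do not anticipate any real obstacle in this argument; the only point requiring some attention is the passage from the termwise strict inequality $|\mu_\ell|>R$ to a strict inequality at the level of the $\ell^2$-sums, for which the nontriviality assumption $v\ne 0$ is essential (otherwise one would only recover the non-strict inequality $\|Bv\|_H^2\ge R^2\|v\|_H^2$).
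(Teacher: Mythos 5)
Your proof is correct. The paper does not actually supply a proof of this lemma: it introduces it with the words \emph{``Lastly, we recall for the reader's convenience a fact from elementary functional analysis''} and then simply states the result, so there is no argument in the paper to compare against. Your spectral-decomposition argument is the standard one and closes the gap the paper leaves to the reader.

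Two small remarks. First, you should make explicit (as you do, but it deserves a sentence) that the family $v_1,\dots,v_r$ is taken to span the \emph{full} eigenspaces for eigenvalues in $[-R,R]$ — i.e.\ eigenvalues are counted with multiplicity — otherwise orthogonality to the $v_k$ would not place $v$ in the orthogonal complement of those eigenspaces and the conclusion could fail; this is clearly the intended reading given how the lemma is invoked in Proposition~\ref{proposition difficult direction}. Second, the passage from the termwise strict inequality to strictness of the summed inequality, which you handle by isolating a single nonzero coefficient, can be made even more direct: since $B$ has discrete spectrum, the eigenvalues cannot accumulate at $\pm R$, so there is a uniform gap $R':=\min\{|\mu_\ell|:|\mu_\ell|>R\}>R$, giving at once $\|Bv\|_H^2\ge (R')^2\|v\|_H^2>R^2\|v\|_H^2$ for $v\ne 0$. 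Either way the result holds.

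Finally, note that the ``$k=1,\dots,n$'' in equation~\eqref{lemma about spectrum equatoin 1} is evidently a typo for ``$k=1,\dots,r$''; your reading is the correct one.
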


\subsubsection{Proof of Theorem~\ref{Main results theorem 6}}
\label{Proof of Theorem 2.3}

We start by proving two propositions establishing
that, for sufficiently large $n$, we have the same number of eigenvalues $\lambda_k$ and $\mu_k$ in each interval $(\nu_n,\nu_{n+1}]$.

\begin{proposition}
\label{proposition easy direction}
There exists a natural number $K$ such that for all $n>K$ we have
\begin{equation}
\label{proposition easy direction equation 1}
N(\tfrac{\nu_n+\nu_{n+1}}{2};\tfrac{\nu_{n+1}-\nu_n}2)\ge \widetilde{N}(\tfrac{\nu_n+\nu_{n+1}}{2};\tfrac{\nu_{n+1}-\nu_n}2).
\end{equation}
\end{proposition}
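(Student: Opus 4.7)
The plan is to convert the $r:=\widetilde{N}(\tfrac{\nu_n+\nu_{n+1}}{2};\tfrac{\nu_{n+1}-\nu_n}{2})$ eigenvalues $\mu_{k_1}\le\dots\le\mu_{k_r}$ of the $A_j$'s lying in $(\nu_n,\nu_{n+1}]$ into approximate eigenfunctions of $A$, orthonormalise them cheaply so that the associated errors remain superpolynomially small, and then apply Lemma~\ref{lemma about bound on number of ev in interval} to extract at least $r$ eigenvalues of $A$ out of an interval barely larger than $(\nu_n,\nu_{n+1}]$. The upgrade to exact containment in $(\nu_n,\nu_{n+1}]$ will then be forced by the spectral gap of size $\gtrsim n^{-\gamma}$ enjoyed by the endpoints $\nu_n,\nu_{n+1}$ (property (c) of the partition, Lemma~\ref{lemma about existence of gamma}).

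Concretely, I would pick, for each $\mu_{k_i}=\lambda^{(j_i)}_{k_i'}$, a normalised eigenfunction $u_i:=u^{(j_i)}_{k_i'}$, arranging the eigenfunctions coming from the same $A_j$ to be mutually orthogonal. Weyl's law yields $r=O(n^{\beta d/s})$, and Lemma~\ref{lemma about almost orthogonality} combined with the exact orthogonality in the same-$j$ case gives $\|F-I\|_{\max}=O(n^{-\infty})$ for the Gram matrix $F=(\langle u_i,u_{i'}\rangle)$. For $n$ large enough, hypothesis~\eqref{lemma about orthonormalisation equation 1} is automatically satisfied, and Lemma~\ref{lemma about orthonormalisation} supplies a Hermitian $G$ with $GFG=I$ and $\|G-I\|_{\max}=O(n^{-\infty})$; the combinations $\tilde u_i:=\sum_{i'}G_{ii'}u_{i'}$ are then strictly orthonormal in $L^2(M)$.

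The crucial estimate will be $\|(A-\mu_{k_i})\tilde u_i\|_{L^2}=O(n^{-\infty})$. Expanding via Proposition~\ref{proposition on action of projections on eigenfunctions}(c), the left-hand side equals
\[
\sum_{i'}G_{ii'}(\mu_{k_{i'}}-\mu_{k_i})\,u_{i'}+O(n^{-\infty}).
\]
The $i'=i$ contribution vanishes; for $i'\ne i$ the factor $|G_{ii'}|$ is superpolynomially small while $|\mu_{k_{i'}}-\mu_{k_i}|\le\nu_{n+1}-\nu_n$ is only polynomially small — so the triangle inequality, together with the polynomial bound on $r$, produces the required superpolynomial decay. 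Lemma~\ref{lemma about bound on number of ev in interval} then places at least $r$ eigenvalues of $A$ inside $[\mu_{k_1}-\sqrt r\,\varepsilon,\mu_{k_r}+\sqrt r\,\varepsilon]=[\mu_{k_1},\mu_{k_r}]+O(n^{-\infty})$, and since property~(c) ensures $\operatorname{dist}(\nu_n,\sigma^+(A))$ and $\operatorname{dist}(\nu_{n+1},\sigma^+(A))$ are both $\gtrsim n^{-\gamma}$, all those eigenvalues sit inside $(\nu_n,\nu_{n+1}]$ once $n$ is large enough, giving~\eqref{proposition easy direction equation 1}.

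The hardest part, I expect, will be securing the superpolynomial decay of $\|(A-\mu_{k_i})\tilde u_i\|_{L^2}$: a naive Gram--Schmidt orthonormalisation would inflate constants factorially in $r$ and destroy this estimate, which is precisely why the bespoke orthonormalisation procedure of Lemma~\ref{lemma about orthonormalisation} and the finely tuned partition of subsubsection~\ref{Partition of the positive semi-axis} — in particular the fact that $\nu_{n+1}-\nu_n$ is only polynomially small — are both indispensable to the argument.
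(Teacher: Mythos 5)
Your proposal is correct and follows essentially the same strategy as the paper's proof: pick eigenfunctions of the $A_j$ for the $r$ values $\mu_k\in(\nu_n,\nu_{n+1}]$, invoke Lemma~\ref{lemma about almost orthogonality} and Lemma~\ref{lemma about orthonormalisation} to orthonormalise them with only $O(n^{-\infty})$ distortion, verify via Proposition~\ref{proposition on action of projections on eigenfunctions}(c) that they remain $O(n^{-\infty})$-approximate eigenfunctions of $A$, apply Lemma~\ref{lemma about bound on number of ev in interval}, and finish with the gap estimate of Lemma~\ref{lemma about existence of gamma}. The only (cosmetic) difference is that you estimate $\|(A-\mu_{k_i})\tilde u_i\|$ by expanding directly in the $u_{i'}$ and bounding the off-diagonal terms, whereas the paper first records $\tilde v_k=v_k+O(n^{-\infty})$ via elliptic regularity and then invokes Proposition~\ref{proposition on action of projections on eigenfunctions}(c) once; both are equivalent.
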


\begin{proof}
Suppose $\tilde N(\tfrac{\nu_n+\nu_{n+1}}{2};\tfrac{\nu_{n+1}-\nu_n}2)=r$. This means that the interval $(\nu_n, \nu_{n+1}]$ contains precisely $r$ elements 
%\begin{equation}
%\label{proof proposition easy direction equation 1}
$
\mu_{p+1}\le \ldots\le \mu_{p+r}
$
%\end{equation}
from the sequence \eqref{positive eigenvalues of Aj combined} (recall that we have no eigenvalues in a neighbourhood of the endpoints of our partition). 

Each $\mu_{p+k}$, $k=1,\ldots, r$, is an eigenvalue of $A_j$ for some $j$. Let us denote by $v_k$, $k=1,\ldots, r$, the corresponding normalised eigenfunctions. Here we assume that eigenfunctions corresponding to the same operator $A_j$
are chosen to be orthogonal.

In view of Lemma~\ref{lemma about almost orthogonality} we have
\begin{equation}
\label{proof proposition easy direction equation 2}
\langle v_k, v_{k'} \rangle=\delta_{kk'}+O(n^{-\infty}),\qquad k,k'=1,\ldots, r,
\end{equation}
where $\delta_{kk'}$ is the Kronecker delta.

Let $F$ be the $r\times r$ matrix whose entries are defined in accordance with
\begin{equation}
\label{proof proposition easy direction equation 2a}
F_{kk'}:=\langle v_k, v_{k'} \rangle, \qquad 1\le k,k'\le r,
\end{equation}
and let $G$ be the matrix given by Lemma~\ref{lemma about orthonormalisation}.
Then formula \eqref{lemma about orthonormalisation equation 2} implies that the functions
\begin{equation}
\label{proof proposition easy direction equation 2b}
\tilde{v}_k:=\sum_{q=1}^r G_{qk} \,v_{q}, \qquad k=1,\ldots,r,
\end{equation}
satisfy
\begin{equation}
\label{proof proposition easy direction equation 3}
\langle \tilde v_k, \tilde v_{k'}\rangle=\delta_{kk'}, \qquad k,k'=1,\ldots,r.
\end{equation}
Furthermore, as $\|F-I\|_{\max}=O(n^{-\infty})$ in view of \eqref{proof proposition easy direction equation 2}--\eqref{proof proposition easy direction equation 2a}, formula \eqref{lemma about orthonormalisation equation 3}
and elliptic regularity imply
\begin{equation}
\label{proof proposition easy direction equation 4}
\tilde{v}_k=v_k+O(n^{-\infty}), \qquad k=1,\ldots, r.
\end{equation}

Formula \eqref{proof proposition easy direction equation 4} and Proposition~\ref{proposition on action of projections on eigenfunctions}(c) give us
%\begin{equation*}
%\label{proof proposition easy direction equation 5}
$\|(A-\mu_{p+k})\tilde{v}_k \|_{L^2}=O(n^{-\infty})$, $k=1,\ldots,r$.
%\end{equation*}
In particular, we have
\begin{equation}
\label{proof proposition easy direction equation 6}
\|(A-\mu_{p+k})\tilde{v}_k \|_{L^2}\le C n^{-\tilde{\gamma}}, \qquad k=1,\ldots,r,
\end{equation}
for some $C>0$ independent of $k$ and $n$, and $\tilde{\gamma}:=\tfrac{\beta d}{2s}+\gamma+1$.

In light of \eqref{proof proposition easy direction equation 3} and \eqref{proof proposition easy direction equation 6}, we can apply Lemma~\ref{lemma about bound on number of ev in interval} to obtain
\begin{equation}
\label{proof proposition easy direction equation 7}
\#\{\lambda_k\ |\ \lambda_k\in [\mu_{p+1}-C \sqrt{r}\,n^{-\tilde{\gamma}},\mu_{p+r}+C \sqrt{r}\,n^{-\tilde{\gamma}}]\}\ge r.
\end{equation}
As $\sqrt{r}=o\bigl(n^{\frac{\beta d}{2s}}\bigr)$, see
Theorem~\ref{Weyl asymptotics for elliptic systems theorem}, we have
%\begin{equation}
%\label{proof proposition easy direction equation 8}
$\sqrt{r}\,n^{-\tilde\gamma}=o(n^{-\gamma-1})$.
%\end{equation}
Hence Lemma~\ref{lemma about existence of gamma} tells us that
%\begin{equation*}
$
[\mu_{p+1}-C \sqrt{r}\,n^{-\tilde{\gamma}},\mu_{p+r}+C \sqrt{r}\,n^{-\tilde{\gamma}}] \subset (\nu_n, \nu_{n+1}]
$
%\end{equation*}
for sufficiently large $n$, so that \eqref{proof proposition easy direction equation 7} implies \eqref{proposition easy direction equation 1}.
\end{proof}

\begin{proposition}
\label{proposition difficult direction}
There exists a natural number $K$ such that for all $n>K$ we have
\begin{equation}
\label{proposition difficult direction equation 1}
\widetilde{N}(\tfrac{\nu_n+\nu_{n+1}}{2};\tfrac{\nu_{n+1}-\nu_n}2)\ge N(\tfrac{\nu_n+\nu_{n+1}}{2};\tfrac{\nu_{n+1}-\nu_n}2).
\end{equation}
\end{proposition}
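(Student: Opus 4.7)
The plan is to mirror the construction used in Proposition~\ref{proposition easy direction}, but going in the opposite direction. Let $r:=N(\tfrac{\nu_n+\nu_{n+1}}{2};\tfrac{\nu_{n+1}-\nu_n}{2})$ and let $u_{p+1},\dots,u_{p+r}$ be orthonormal eigenfunctions of $A$ corresponding to eigenvalues $\lambda_{p+1}\le\cdots\le\lambda_{p+r}$ in $(\nu_n,\nu_{n+1}]$. The idea is to construct $r$ approximate eigenfunctions of the direct-sum operator $\mathbf{A}:=A_1\oplus\cdots\oplus A_{m^+}$ acting on $H:=L^2(M)\oplus\cdots\oplus L^2(M)$ ($m^+$ copies, with the standard inner product $\langle\cdot,\cdot\rangle_H$). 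Since $\mathbf{A}$ is self-adjoint with discrete spectrum equal to $\bigcup_{j=1}^{m^+}\sigma(A_j)$ counted with exactly the multiplicity convention used for $\widetilde N$, proving that $\mathbf{A}$ has at least $r$ eigenvalues in $(\nu_n,\nu_{n+1}]$ implies \eqref{proposition difficult direction equation 1}.

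To each $u_{p+k}$ I would associate $\Phi_k:=(P_1 u_{p+k},\dots,P_{m^+} u_{p+k})\in H$. Properties (ii)--(iii) of Theorem~\ref{theorem results from part 1}(a) yield $P_j^*P_j=P_j\mod\Psi^{-\infty}$, and combined with property (iv) and Proposition~\ref{proposition on action of projections on eigenfunctions}(a) (which annihilates the negative-index projections on the $u_{p+k}$) this gives the approximate orthonormality
\begin{equation*}
\langle\Phi_k,\Phi_{k'}\rangle_H=\delta_{kk'}+O(n^{-\infty}),\qquad k,k'=1,\dots,r.
\end{equation*}
Moreover, property (v) of Theorem~\ref{theorem results from part 1}(a), formula \eqref{operators Aj}, and $P_lP_j\in\Psi^{-\infty}$ for $l\ne j$ together imply $A_jP_ju_{p+k}=AP_ju_{p+k}+O(n^{-\infty})=\lambda_{p+k}P_ju_{p+k}+O(n^{-\infty})$, so that
\begin{equation*}
\|(\mathbf{A}-\lambda_{p+k})\Phi_k\|_H=O(n^{-\infty})\quad\text{uniformly in }k.
\end{equation*}

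Next I would apply Lemma~\ref{lemma about orthonormalisation} to the Gram matrix $F_{kk'}:=\langle\Phi_k,\Phi_{k'}\rangle_H$ to produce an exactly orthonormal system $\widetilde\Phi_k:=\sum_q G_{qk}\Phi_q$ with $\|G-I\|_{\max}=O(n^{-\infty})$; the hypothesis $\|F-I\|_{\max}\le 1/(3r^2)$ holds for large $n$ because Theorem~\ref{Weyl asymptotics for elliptic systems theorem} gives $r=o(n^{\beta d/s})$. Using $(\mathbf{A}-\lambda_{p+k})\Phi_q=(\lambda_{p+q}-\lambda_{p+k})\Phi_q+O(n^{-\infty})$, together with the crude bound $|\lambda_{p+q}-\lambda_{p+k}|\le\nu_{n+1}-\nu_n=O(1)$ and $|G_{qk}-\delta_{qk}|=O(n^{-\infty})$, a short calculation then yields
\begin{equation*}
\|(\mathbf{A}-\lambda_{p+k})\widetilde\Phi_k\|_H=O(n^{-\infty})\quad\text{uniformly in }k.
\end{equation*}

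Applying the obvious generalisation of Lemma~\ref{lemma about bound on number of ev in interval} to the self-adjoint operator $\mathbf{A}$, with the orthonormal set $\{\widetilde\Phi_k\}_{k=1}^r$ and candidate eigenvalues $\lambda_{p+1},\dots,\lambda_{p+r}$, produces at least $r$ eigenvalues of $\mathbf{A}$ in the enlarged interval $[\lambda_{p+1}-\sqrt{r}\,\varepsilon,\lambda_{p+r}+\sqrt{r}\,\varepsilon]$ with $\sqrt{r}\,\varepsilon=O(n^{-\infty})$. The $\sim n^{-\gamma}$ safety margin from Lemma~\ref{lemma about existence of gamma} then ensures that this enlarged interval still lies inside $(\nu_n,\nu_{n+1}]$ for $n$ large, which proves \eqref{proposition difficult direction equation 1}. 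The main obstacle I anticipate is the approximate orthonormality of the $\Phi_k$ in $H$: this is where one really uses that the $P_j$ are \emph{bona fide} almost-projections rather than merely operators with the correct principal symbol, and the argument crucially combines the almost-idempotency of the $P_j$ with Proposition~\ref{proposition on action of projections on eigenfunctions}(a) to verify that the map $u\mapsto(P_1u,\dots,P_{m^+}u)$ is an approximate isometry when restricted to the spectral subspace of $A$ in $(\nu_n,\nu_{n+1}]$.
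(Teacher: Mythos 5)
Your proof is correct but takes a genuinely different route from the paper. The paper's proof of this proposition is by contradiction: assuming $\widetilde N<N$, it produces a unit vector $u\in E_n$ orthogonal to $\widetilde E_n$, invokes Lemma~\ref{lemma numeric inequality} to single out one index $l$ for which $P_lu$ behaves like an approximate eigenfunction of $A_l$, and derives a contradiction via Lemma~\ref{lemma about spectrum} and the safety margin of Lemma~\ref{lemma about existence of gamma}. That selection step is forced because an eigenfunction of $A$ may be spread across several of the $P_j$'s, so one cannot simply assign a single $j$ to each $u_{p+k}$ and run the proof of Proposition~\ref{proposition easy direction} in reverse. Your direct-sum device --- passing to $\mathbf A=A_1\oplus\cdots\oplus A_{m^+}$ on $L^2(M)^{\oplus m^+}$ and the vectors $\Phi_k=(P_1u_{p+k},\dots,P_{m^+}u_{p+k})$ --- removes that obstruction cleanly: no choice of $j$ is ever required, and the argument becomes an exact mirror of Proposition~\ref{proposition easy direction} using only Lemmas~\ref{lemma about bound on number of ev in interval}, \ref{lemma about existence of gamma} and \ref{lemma about orthonormalisation} (Lemma~\ref{lemma about bound on number of ev in interval} is abstract functional analysis, so its extension to $\mathbf A$ is indeed immediate, and $\sigma(\mathbf A)\cap(\nu_n,\nu_{n+1}]$ is precisely the multiset counted by $\widetilde N$ since $\nu_n>0$). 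This dispenses with Lemmas~\ref{lemma numeric inequality} and \ref{lemma about spectrum} at the cost of an auxiliary Hilbert space, and is arguably more symmetric with the proof of the opposite inequality. I have checked the key steps: approximate orthonormality of the $\Phi_k$ via $P_j^*P_j=P_j\bmod\Psi^{-\infty}$, $\sum_jP_j=\mathrm{Id}\bmod\Psi^{-\infty}$ and Proposition~\ref{proposition on action of projections on eigenfunctions}(a); the approximate-eigenvector bound via \eqref{AP_l = A_l} and $[A,P_j]\in\Psi^{-\infty}$; and the post-orthonormalisation error estimate, which holds because $\nu_{n+1}-\nu_n=o(1)$ and $r=o(n^{\beta d/s})$ while $\|G-I\|_{\max}=O(n^{-\infty})$. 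All sound.
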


\begin{proof}
Let $u_k$ be orthonormal eigenfunctions of $A$ corresponding to $\lambda_k$ and, for each $j$, let $u_k^{(j)}$ be orthonormal eigenfunctions of $A_j$ corresponding to $\lambda_k^{(j)}$. 

Let us define
\begin{equation*}
\label{proof proposition difficult direction equation 1}
E_n:=\operatorname{span}(u_k\ |\ \lambda_k\in(\nu_n, \nu_{n+1}]),
\end{equation*}
\begin{equation*}
\label{proof proposition difficult direction equation 2}
\tilde E_n:=\operatorname{span}(u_k^{(j)}\ |\ \lambda_k^{(j)}\in(\nu_n, \nu_{n+1}], \ j=1, \ldots,m^+).
\end{equation*}

Arguing by contradiction, suppose
%\begin{equation*}
%\label{proof proposition difficult direction equation 3}
$\widetilde{N}(\tfrac{\nu_n+\nu_{n+1}}{2};\tfrac{\nu_{n+1}-\nu_n}2)< N(\tfrac{\nu_n+\nu_{n+1}}{2};\tfrac{\nu_{n+1}-\nu_n}2)$.
%\end{equation*}
Then there exists a $u \in E_n$, $\|u\|_{L^2}=1$, such that
\begin{equation}
\label{proof proposition difficult direction equation 4}
\langle u,v \rangle=0 \qquad \forall v\in \tilde E_n.
\end{equation}

Proposition~\ref{proposition on action of projections on eigenfunctions}(a) implies
\begin{equation}
\label{proof proposition difficult direction equation 5}
u=\sum_{j=1}^{m^+}P_ju+O(n^{-\infty}),
\end{equation}
whereas Proposition~\ref{proposition on action of projections on eigenfunctions}(b) implies
\begin{equation}
\label{proof proposition difficult direction equation 6}
u_k^{(j)}=P_ju_k^{(j)}+O(n^{-\infty})
\end{equation}
for $j=1,\ldots,m^+$ and all $k$ such that $u_k^{(j)}\in \widetilde{E}_n$. 

We claim that
\begin{equation}
\label{proof proposition difficult direction equation 7}
\langle u_k^{(j')}, P_ju\rangle=O(n^{-\infty})
\end{equation}
for all $j,j'=1,\ldots,m^+$ and all $k$ such that $u_k^{(j)}\in \widetilde{E}_n$. Indeed, for $j\ne j'$ \eqref{proof proposition difficult direction equation 7} follows from \eqref{proof proposition difficult direction equation 6} and Theorem~\ref{theorem results from part 1}(a). For $j=j'$, using Theorem~\ref{theorem results from part 1}(a), \eqref{proof proposition difficult direction equation 6} and \eqref{proof proposition difficult direction equation 4}, we obtain
\begin{equation*}
\label{proof proposition difficult direction equation 8}
%\begin{split}
\langle u_k^{(j)}, P_ju\rangle
%&
=\langle P_j u_k^{(j)}, u\rangle+O(n^{-\infty})
%\\
%&
=\langle u_k^{(j)}, u\rangle+O(n^{-\infty})
%\\
%&
=O(n^{-\infty}).
%\end{split}
\end{equation*}

By the Spectral Theorem we have
\begin{equation}
\label{proof proposition difficult direction equation 9}
\left\| \left(A-\frac{\nu_{n+1}+\nu_{n}}2\right) u \right\|_{L^2} \le\frac{\nu_{n+1}-\nu_{n}}2.
\end{equation}
Squaring \eqref{proof proposition difficult direction equation 9}, substituting  \eqref{proof proposition difficult direction equation 5}  in and using Theorem~\ref{theorem results from part 1}(a), we get
\begin{equation}
\label{proof proposition difficult direction equation 10}
\sum_{j=1}^{m^+} \left\| \left(A-\frac{\nu_{n+1}+\nu_{n}}2\right) P_ju \right\|_{L^2}^2 \le\frac{(\nu_{n+1}-\nu_{n})^2}4+O(n^{-\infty})\,.
\end{equation}

Put
\begin{equation}
\label{proof proposition difficult direction equation 11bis}
\rho:=\beta+\gamma
\end{equation}
and let
%\begin{equation*}
%\label{proof proposition difficult direction equation 11}
$J:=\{j\in\{1,\ldots,m^+\}\ |\ \|P_ju\|_{L^2}\ge n^{-\rho} \}$.
%\end{equation*}
As 
\begin{equation}
\label{proof proposition difficult direction equation 12}
\sum_{j=1}^{m^+}\|P_ju\|_{L^2}^2=1+O(n^{-\infty}),
\end{equation}
the set $J$ is nonempty. 

When we restrict the summation to indices from $J$, formula \eqref{proof proposition difficult direction equation 12} reads
\begin{equation*}
\label{proof proposition difficult direction equation 13}
\sum_{j\in J}\|P_ju\|_{L^2}^2=1+O(n^{-\rho}).
\end{equation*}
By suitably rescaling the function $u$, let us define a function $\tilde u$ such that
\begin{equation}
\label{proof proposition difficult direction equation 14}
\sum_{j\in J}\|P_j\tilde u\|_{L^2}^2=1.
\end{equation}
By restricting the summation to indices from the set $J$ only and expressing the result in terms of $\tilde u$, formula \eqref{proof proposition difficult direction equation 10} turns into
\begin{equation}
\label{proof proposition difficult direction equation 15}
\sum_{j\in J} \left\| \left(A-\frac{\nu_{n+1}+\nu_{n}}2\right) P_j\tilde u \right\|_{L^2}^2 \le \frac{(\nu_{n+1}-\nu_{n})^2}4(1+C\,n^{-\rho}).
\end{equation}
On account of \eqref{proof proposition difficult direction equation 14} and \eqref{proof proposition difficult direction equation 15}, Lemma~\ref{lemma numeric inequality} implies that there exists an $l\in J$ such that
\begin{equation}
\label{proof proposition difficult direction equation 15bis}
\left\| \left(A_l-\frac{\nu_{n+1}+\nu_{n}}2\right) P_l \tilde u \right\|_{L^2}^2 \le \frac{(\nu_{n+1}-\nu_{n})^2}4(1+C\,n^{-\rho}) \|P_l\tilde u\|_{L_2}^2.
\end{equation}
In \eqref{proof proposition difficult direction equation 15bis} we were able to replace $A$ with $A_l$ by resorting to formula \eqref{operators Aj}, which implies 
\begin{equation}
\label{AP_l = A_l}
AP_l=A_lP_l \mod \Psi^{-\infty}.
\end{equation}

Formula \eqref{proof proposition difficult direction equation 7} tells us that
\begin{equation}
\label{proof proposition difficult direction equation 16}
\langle u^{(l)}_k,P_l\tilde{u}\rangle=O(n^{-\infty})
\end{equation}
for all $k$ such that $u_k^{(l)}\in \tilde E_n$.
Put
\begin{equation}
\label{proof proposition difficult direction equation 17}
\widehat{u}_l:=
P_l\tilde{u}-\sum_{k\,:\,\lambda^{(l)}_k\in(\nu_n,\nu_{n+1}]}
\langle u^{(l)}_k,P_l\tilde{u}\rangle \,u^{(l)}_k,
\end{equation}
so that
\begin{equation}
\label{proof proposition difficult direction equation 18}
\langle u^{(l)}_k,\widehat{u}_l\rangle=0 \qquad \forall u_k^{(l)}\in \tilde{E}_n.
\end{equation}
Formulae
\eqref{proof proposition difficult direction equation 17}
and
\eqref{proof proposition difficult direction equation 16}
give us
%\begin{equation*}
%\label{proof proposition difficult direction equation 19}
$\widehat{u}_l=
P_l \tilde{u}+O(n^{-\infty})$,
%\end{equation*}
so \eqref{proof proposition difficult direction equation 15bis} implies
\begin{equation}
\label{proof proposition difficult direction equation 20}
\left\| \left(A_l-\frac{\nu_{n+1}+\nu_{n}}2\right) \widehat{u}_l \right\|_{L^2} \le \frac{\nu_{n+1}-\nu_{n}}2(1+C\,n^{-\rho}) \|\widehat{u}_l\|_{L_2}.
\end{equation}

Formula \eqref{proof proposition difficult direction equation 11bis} and Lemma~\ref{lemma about achieving accuracy} imply
%\begin{equation*}
%\label{proof proposition difficult direction equation 20bis}
$(\nu_{n+1}-\nu_{n})\,n^{-\rho}=O(n^{-\gamma-1})$.
%\end{equation*}
Therefore, in view of Lemma~\ref{lemma about existence of gamma}, for sufficiently large $n$ we have
\begin{equation}
\label{proof proposition difficult direction equation 21}
\widetilde{N}(\tfrac{\nu_n+\nu_{n+1}}{2};\tfrac{\nu_{n+1}-\nu_n}2)=\widetilde{N}(\tfrac{\nu_n+\nu_{n+1}}{2};\tfrac{(\nu_{n+1}-\nu_n)(1+C\,n^{-\rho})}2).
\end{equation}
But, on account of Lemma~\ref{lemma about spectrum}, formulae \eqref{proof proposition difficult direction equation 20} and \eqref{proof proposition difficult direction equation 18}
imply
\begin{equation*}
\label{proof proposition difficult direction equation 21bis}
\widetilde{N}(\tfrac{\nu_n+\nu_{n+1}}{2};\tfrac{(\nu_{n+1}-\nu_n)(1+C\,n^{-\rho})}2)
>
\widetilde{N}(\tfrac{\nu_n+\nu_{n+1}}{2};\tfrac{\nu_{n+1}-\nu_n}2),
\end{equation*}
which contradicts \eqref{proof proposition difficult direction equation 21}.
\end{proof}

Theorem~\ref{Main results theorem 6} now follows easily.

\begin{proof}[Proof of Theorem~\ref{Main results theorem 6}]
Propositions~\ref{proposition easy direction} and~\ref{proposition difficult direction} imply that there exists a natural $K$ such that
\begin{equation*}
\label{proof main theorem 6 equation 1}
N(\tfrac{\nu_n+\nu_{n+1}}{2};\tfrac{\nu_{n+1}-\nu_n}2)= \widetilde{N}(\tfrac{\nu_n+\nu_{n+1}}{2};\tfrac{\nu_{n+1}-\nu_n}2)
\end{equation*}
for all $n>K$. This means that for all $n>K$ each interval $(\nu_n, \nu_{n+1}]$ contains the same number of eigenvalues $\lambda_k$ from \eqref{positive eigenvalues of A} and eigenvalues $\mu_k$ from \eqref{positive eigenvalues of Aj combined}.

Let
\begin{equation*}
\label{proof main theorem 6 equation 2}
k':=\min\{k\ |\ \lambda_k\in (\nu_{K+1}, \nu_{K+2}]\},
\qquad 
k'':=\min\{k\ |\ \mu_k\in (\nu_{K+1}, \nu_{K+2}]\},
\end{equation*}
and put
%\begin{equation*}
%\label{proof main theorem 6 equation 4}
$r_\alpha:=k''-k'$.
%\end{equation*}
Then, using formula \eqref{proof main theorem 6 equation 5 extra}, we arrive at
\begin{equation*}
\label{proof main theorem 6 equation 5}
\lambda_{k}-\mu_{k+r_\alpha}=O(k^{-\alpha}) \quad\text{as}\quad k\to+\infty.
\end{equation*}
\end{proof}

\begin{remark}
Observe that in the proofs of Theorems~\ref{Main results theorem 4}--\ref{Main results theorem 6} choosing the operators $A_j$ precisely in accordance with formula \eqref{operators Aj} is not crucial. One can, for instance, replace \eqref{operators Aj} with
\begin{equation*}
%\label{operators Aj generalised}
A_j:=A-\underset{l\ne j}{\underset{l=1,\dots,m^+}\sum}c_{j,l}\,P_l^*AP_l\,,
\qquad j=1,\dots,m^+,
\end{equation*}
where $c_{j,l}>1$.
What we mainly relied upon is the fact that the operators $A_j$ satisfy the properties
\begin{equation*}
P_j^*A_jP_j=P_j^*AP_j \mod \Psi^{-\infty},
\end{equation*}
\begin{equation*}
[A_j, P_l]=0 \mod \Psi^{-\infty}\quad \text{for all}\quad l,
\end{equation*}
and
\begin{equation*}
P_l^*A_jP_l\le 0 \mod \Psi^{-\infty} \quad \text{for}\quad l \ne j.
\end{equation*}
\end{remark}

\section{Invariant subspaces in hyperbolic systems}
\label{Invariant subspaces in hyperbolic systems}

In this section we will apply our results to the study of first and second order hyperbolic systems.

\subsection{First order operators}
\label{First order operators}

Before addressing the proof of Theorem~\ref{Main results theorem 7}, let us recall, in an abridged manner and for the convenience of the reader, the propagator construction from \cite{dirac}, which builds upon \cite{LSV, SaVa, CDV} and is an extension to first order systems of earlier results for scalar operators \cite{wave,lorentzian}.

Let $A\in \Psi^1$ be an operator as in Section~\ref{Statement of the problem}.
For each $j\in\{-m^-,\ldots, -1,1,\ldots,m^+\}$ let us denote by $(x^{(j)}(t;y,\eta), \xi^{(j)}(t;y,\eta))$ the Hamiltonian flow in the cotangent bundle generated by the Hamiltonian $h^{(j)}(x,\xi)$, namely, the solution to Hamilton's equations
\begin{equation}
\label{hamilton's equations}
\begin{cases}
\dot{x}^{(j)}=h_{\xi}(x^{(j)},\xi^{(j)}), \\
\dot \xi^{(j)}=-h_{x}(x^{(j)},\xi^{(j)})
\end{cases}
\end{equation}
with initial condition $(x^{(j)}(0;y,\eta), \xi^{(j)}(0;y,\eta))=(y,\eta)$. Here and further on the dot denotes differentiation with respect to $t$ and subscripts denote partial differentiation.

For each $j$ choose a function $\varphi^{(j)}(t,x;y,\eta)\in C^\infty(\mathbb{R}\times M\times T'M; \mathbb{C})$ positively homogeneous in $\eta$ of degree 1 satisfying
\begin{enumerate}[(i)]
\item
$\left.\varphi^{(j)}\right|_{x=x^{(j)}}=0$,

\item
$\left. \varphi^{(j)}_{x^\alpha}\right|_{x=x^{(j)}}=\xi^{(j)}_\alpha$,

\item
$\left. \det\varphi^{(j)}_{x^\alpha\eta_\beta}\right|_{x=x^{(j)}}\ne0$,

\item
$\operatorname{Im} \varphi^{(j)}\ge 0$.
\end{enumerate}
Such functions are called \emph{phase functions} and
they always exist \cite[Lemma~1.4]{LSV}.

Then the propagator 
$
U(t):=e^{-i t A}
$
 can be written, modulo an infinitely smoothing operator, as the sum of precisely $m$ oscillatory integrals
\begin{equation}
\label{U(t) as sum of U(j)j}
U(t)= \sum_j U^{(j)}(t) \mod C^\infty(\mathbb{R};\Psi^{-\infty})
\end{equation}
where
\begin{equation}
\label{oscillatory integral U j}
[U^{(j)}(t)u](x)=\frac{1}{(2\pi)^d}\int_{T'M} e^{i \varphi^{(j)}(t,x;y,\eta)}\,\mathfrak{a}^{(j)}(t;y,\eta)\,\chi^{(j)}(t,x;y,\eta)\,w^{(j)}(t,x;y,\eta)\,u(y)\,dy\,d\eta
\end{equation}
and
\begin{itemize}

\item the function $\chi^{(j)}\in C^\infty(\mathbb{R}\times M \times T'M)$ is a cut-off satisfying
\begin{enumerate}[(a)]
\item $\chi^{(j)}(t,x;y,\eta)=0$ on $\{(t,x;y,\eta) \,|\, |h^{(j)}(y,\eta)|\leq 1/2\}$,
\item $\chi^{(j)}(t,x;y,\eta)=1$ on the intersection of $\{(t,x;y,\eta) \,|\, |h^{(j)}(y,\eta)| \geq 1\}$ with some conical neighbourhood of $\{(t,x^{(j)}(t;y,\eta);y,\eta) \}$,
\item $\chi^{(j)}(t,x;y,\alpha\, \eta)=\chi^{(j)}(t,x;y,\eta)$ for $\alpha\geq 1$ on $\{ (t,x;y,\eta) \, | \, |h^{(j)}(y,\eta)|\geq 1   \}$;
\end{enumerate}

\item
the weight $w^{(j)}$ is defined by the phase function $\varphi^{(j)}$ in accordance with
\begin{equation*}
\label{weight definition general case}
w^{(j)}(t,x;y,\eta):= \left[{\det}^2( \varphi^{(j)}_{x^\alpha\eta_\beta}) \right]^\frac14,
\end{equation*}
with the smooth branch of the complex root chosen in such a way that $w^{(j)}(0,y;y,\eta)=1$.
\end{itemize}

The smooth matrix-function $\mathfrak{a}^{(j)}\in \mathrm{S}_{\mathrm{ph}}^0(\mathbb{R}\times T'M;\mathrm{Mat}(m;\mathbb{C}))$ appearing in \eqref{oscillatory integral U j} is the unknown in the algorithm for the construction of $U^{(j)}(t)$. It is an element in the class of poly\-homogeneous symbols of order zero with values in $m\times m$ complex matrices, which means that $\mathfrak{a}^{(j)}$ admits an asymptotic expansion in components positively homogeneous in momentum,
\begin{equation*}
\label{asympotic expansion symbol for A}
\mathfrak{a}^{(j)}(t;y,\eta)\sim \sum_{k=0}^{+\infty} \mathfrak{a}^{(j)}_{-k}(t;y,\eta), \qquad \mathfrak{a}^{(j)}_{-k}(t;y,\alpha\,\eta)=\alpha^{-k}\, \mathfrak{a}^{(j)}_{-k}(t;y,\eta), \quad\forall \alpha >0.
\end{equation*}
The symbol $\mathfrak{a}^{(j)}$ is determined by the requirement that $U^{(j)}(t)$ satisfies, in a distributional sense, the hyperbolic equation
\begin{equation}
\label{equation satisfied by U j}
(-i\partial_t+A)U^{(j)}(t)=0\mod C^\infty(\mathbb{R}; \Psi^{-\infty}).
\end{equation}
Note that $\mathfrak{a}^{(j)}$ does not depend on $x$: this is achieved by means of a procedure called
\emph{reduction of the amplitude}, which turns the partial differential equations brought about by \eqref{equation satisfied by U j} into a hierarchy of transport equations for the homogeneous components of $\mathfrak{a}^{(j)}$ --- ordinary differential equations
in the variable $t$ --- which can be solved iteratively. We refer the reader to \cite[Section~3]{dirac} for further details.

The initial conditions for the transport equations are obtained from the initial condition for the propagator itself:
\begin{equation}
\label{initial condition}
\sum_j U^{(j)}(0)=\operatorname{Id} \mod \Psi^{-\infty}.
\end{equation}

Clearly, the oscillatory integrals $U^{(j)}(0)$ define pseudodifferential operators in $\Psi^{0}$. Furthermore formula \eqref{initial condition} tells us that the oscillatory integrals $U^{(j)}(t)$ for different $j$'s are not independent, but they are related to one another via the initial condition.

By examining formulae \eqref{oscillatory integral U j}, \eqref{equation satisfied by U j} and \eqref{initial condition} it is not difficult to see that $(U^{(j)}(0))_\mathrm{prin}=P^{(j)}$.
This turns out not to be a coincidence: the following theorem will allow us to establish a relation between the pseudodifferential operators $U^{(j)}(0)$ and our pseudodifferential projections~$P_j\,$.

\begin{theorem}
\label{theorem initial condition vs projection}
Put
\begin{equation}
\label{theorem initial condition vs projection equation 1}
V_{lj}(t):=P_l U^{(j)}(t).
\end{equation}
Then for all $j,l\in\{-m^-, \ldots, -1, 1, \ldots, m^+\}$, $j\ne l$, we have
\begin{equation}
\label{theorem initial condition vs projection equation 2}
V_{lj}(t)=0 \mod C^\infty(\mathbb{R};\Psi^{-\infty}).
\end{equation}
\end{theorem}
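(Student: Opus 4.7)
The plan is to show that $V_{lj}(t)$ satisfies the hyperbolic equation for $A$ modulo smoothing, with smoothing initial data when $l\neq j$, and then to conclude by Duhamel's principle.

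First, I would compute, using that $A$ and $P_l$ are time-independent,
\[
(-i\partial_t + A)V_{lj}(t) = P_l\,(-i\partial_t + A)\,U^{(j)}(t) + [A, P_l]\,U^{(j)}(t).
\]
By \eqref{equation satisfied by U j} the first term lies in $C^\infty(\mathbb{R};\Psi^{-\infty})$, post-composition with the $L^2$-bounded $P_l$ preserving the smoothing class. The second term belongs to the same class because $[A,P_l]\in\Psi^{-\infty}$ by Theorem~\ref{theorem results from part 1}(a)(v) and $U^{(j)}(t)$ is bounded on every $H^s$ uniformly on compact intervals. Hence $R_{lj}(t):=(-i\partial_t+A)V_{lj}(t)\in C^\infty(\mathbb{R};\Psi^{-\infty})$.

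Next, I would verify that $V_{lj}(0)\in\Psi^{-\infty}$ when $l\neq j$. The construction recalled above determines each $U^{(j)}(t)$ via the hierarchy of transport equations fed by an initial amplitude $\mathfrak{a}^{(j)}(0;y,\eta)$, subject only to the collective summation identity \eqref{initial condition}. Because the pseudodifferential projections $\{P_j\}$ supplied by Theorem~\ref{theorem results from part 1}(a) satisfy $(P_j)_{\mathrm{prin}}=P^{(j)}$ together with $\sum_j P_j = \mathrm{Id}\mod\Psi^{-\infty}$, they form an admissible family of initial data, and I would adopt the normalisation
\[
U^{(j)}(0) = P_j \mod \Psi^{-\infty}, \qquad j\in\{-m^-,\dots,-1,1,\dots,m^+\}.
\]
With this choice, Theorem~\ref{theorem results from part 1}(a)(iii) yields $V_{lj}(0)=P_l P_j=\delta_{lj}P_j\mod\Psi^{-\infty}$, which is smoothing whenever $l\neq j$.

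Finally, I would invoke Duhamel's formula for $V_{lj}(t)$:
\[
V_{lj}(t) = U(t)\,V_{lj}(0) + i\int_0^t U(t-s)\,R_{lj}(s)\,ds.
\]
The unitary propagator $U(t)=e^{-itA}$ preserves each $H^s$ uniformly on compact time intervals, so the composition of $U(\cdot)$ with a smoothing operator is smoothing with smooth dependence on $t$; integration over compact intervals in $s$ preserves this regularity. Both summands therefore lie in $C^\infty(\mathbb{R};\Psi^{-\infty})$, yielding \eqref{theorem initial condition vs projection equation 2}.

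The principal obstacle is the second step: one must confirm that the algorithmic construction of \cite{dirac} is flexible enough to permit prescribing the full symbol of $U^{(j)}(0)$ to coincide with that of $P_j$ modulo $\Psi^{-\infty}$, consistently with the summation identity \eqref{initial condition} and with the transport equations governing the subsequent evolution of $\mathfrak{a}^{(j)}(t;y,\eta)$. Should this direct adjustment prove inconvenient, an alternative route is to compute the amplitude of $V_{lj}(t)$ as an oscillatory integral on the Lagrangian associated with $h^{(j)}$ and to show, order by order, that its homogeneous components vanish: the leading order uses $P^{(l)}P^{(j)}=0$ together with the range property $\mathfrak{a}^{(j)}_0=P^{(j)}\mathfrak{a}^{(j)}_0$ along the flow, while subsequent orders follow by uniqueness for the transport ODEs along the Hamiltonian trajectories of $h^{(j)}$.
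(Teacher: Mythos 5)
Your first step, showing $R_{lj}(t):=(-i\partial_t+A)V_{lj}(t)\in C^\infty(\mathbb{R};\Psi^{-\infty})$, matches the paper, and the Duhamel argument to pass from smoothing initial data and smoothing forcing to a smoothing $V_{lj}(t)$ is sound. The gap is the middle step, and it is the circularity you flag yourself: the ``normalisation'' $U^{(j)}(0)=P_j\mod\Psi^{-\infty}$ is not a free choice in the construction of \cite{dirac}. The amplitudes $\mathfrak{a}^{(j)}$ are determined order by order by the transport hierarchy subject only to the collective initial condition \eqref{initial condition}; the individual $U^{(j)}(0)$ therefore come out of the algorithm as specific pseudodifferential operators, and identifying them with the $P_j$ modulo $\Psi^{-\infty}$ is precisely the content of Corollary~\ref{corollary initial condition vs projection}, which the paper derives \emph{from} the present theorem. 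Appealing to it here inverts the logical order, and your fallback ``order-by-order'' route has the same problem: once the leading amplitude of $V_{lj}$ vanishes via $P^{(l)}\mathfrak{a}_0^{(j)}=P^{(l)}P^{(j)}\mathfrak{a}_0^{(j)}=0$, ``subsequent orders follow by uniqueness for the transport ODEs'' still requires initial data for those ODEs, i.e.\ knowledge of $V_{lj}(0)$, which is what you do not yet have.

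The paper closes this gap with a contradiction argument that never touches the initial data. Suppose $V_{lj}(t)$ is not smoothing and let $(V_{lj})_{\mathrm{prin},k}$ be its leading nonzero homogeneous component. Two purely algebraic constraints then annihilate it: the hyperbolic equation $(-i\partial_t+A)V_{lj}=0\mod C^\infty(\mathbb{R};\Psi^{-\infty})$ forces $P^{(j)}(V_{lj})_{\mathrm{prin},k}=(V_{lj})_{\mathrm{prin},k}$ along the $h^{(j)}$-flow --- this is the pointwise eigenprojection constraint sitting at the top of the transport hierarchy, not an ODE, so no initial condition is needed --- while the identity $P_lV_{lj}(t)=V_{lj}(t)\mod C^\infty(\mathbb{R};\Psi^{-\infty})$, an immediate consequence of $V_{lj}=P_lU^{(j)}$ and $P_l^2=P_l\mod\Psi^{-\infty}$, forces $P^{(l)}(V_{lj})_{\mathrm{prin},k}=(V_{lj})_{\mathrm{prin},k}$. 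Since $P^{(j)}P^{(l)}=0$ pointwise for $j\ne l$, the leading symbol vanishes, a contradiction. The key move you are missing is this idempotency observation $P_lV_{lj}=V_{lj}$, which supplies the second constraint and closes the argument at every order simultaneously without ever invoking $U^{(j)}(0)$.
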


\begin{proof}
By the definition of $U^{(j)}(t)$ and the fact that $[A,P_l]\in \Psi^{-\infty}$, the operator \eqref{theorem initial condition vs projection equation 1} satisfies 
\begin{equation}
\label{proof theorem initial condition vs projection equation 1}
(-i\partial_t+A)V_{lj}(t)=0  \mod C^\infty(\mathbb{R};\Psi^{-\infty}).
\end{equation}
Arguing by contradiction, suppose that \eqref{theorem initial condition vs projection equation 2} is false. Then there exists an integer $k\ge0$ such that 
\begin{equation}
\label{proof theorem initial condition vs projection equation 2}
V_{lj}(t)\in C^\infty(\mathbb{R};\Psi^{-k}), \qquad V_{lj}(t)\not\in C^\infty(\mathbb{R};\Psi^{-k-1}).
\end{equation}
Let $(V_{lj})_\mathrm{prin,k}(t;y,\eta)$ be the principal symbol of $V_{lj}(t)$ as an operator in $C^\infty(\mathbb{R};\Psi^{-k})$, cf.~\cite[Definition~3.7]{dirac}.

A simple analysis of the leading transport equation for the homogeneous components of the symbol of $V_{lj}(t)$ arising from \eqref{proof theorem initial condition vs projection equation 1} --- see, e.g., \cite[subsection~3.3.4]{nicoll} --- tells us that \eqref{proof theorem initial condition vs projection equation 1} can be satisfied only if
\begin{equation}
\label{proof theorem initial condition vs projection equation 3}
[P^{(j)}(x^{(j)}(t;y,\eta),\xi^{(j)}(t;y,\eta))][(V_{lj})_\mathrm{prin,k}(t;y,\eta)]=[(V_{lj})_\mathrm{prin,k}(t;y,\eta)].
\end{equation}

Now, formula \eqref{theorem initial condition vs projection equation 1}
and the idempotency property of pseudodifferential projections imply
\begin{equation}
\label{proof theorem initial condition vs projection equation 4}
P_l V_{lj}(t)=V_{lj}(t) \mod C^\infty(\mathbb{R};\Psi^{-\infty}).
\end{equation}
Computing the principal symbol of the LHS of \eqref{proof theorem initial condition vs projection equation 4}, we conclude that \eqref{proof theorem initial condition vs projection equation 4} can only be satisfied if
\begin{equation}
\label{proof theorem initial condition vs projection equation 5}
[P^{(l)}(x^{(j)}(t;y,\eta),\xi^{(j)}(t;y,\eta))][(V_{lj})_\mathrm{prin,k}(t;y,\eta)]=[(V_{lj})_\mathrm{prin,k}(t;y,\eta)].
\end{equation}
In writing \eqref{proof theorem initial condition vs projection equation 5} we used the standard formula for the action of a pseudodifferential operator on an exponent \cite[\S18]{shubin} and the fact that $V_{lj}(t)$ is an oscillatory integral with phase function $\varphi^{(j)}$.

As $P^{(j)}P^{(l)}=0$ pointwise in $T^*M \setminus\{0\}$, formulae \eqref{proof theorem initial condition vs projection equation 3} and \eqref{proof theorem initial condition vs projection equation 5} imply
\begin{equation*}
\label{proof theorem initial condition vs projection equation 6}
(V_{lj})_\mathrm{prin,k}(t;y,\eta)=0 \quad \text{for all}\quad t\in \mathbb{R},\ (y,\eta)\in T'M\setminus\{0\},
\end{equation*}
which contradicts \eqref{proof theorem initial condition vs projection equation 2}.
\end{proof}

\begin{corollary}
\label{corollary initial condition vs projection}
For every $j\in\{-m^-, \ldots, -1, 1, \ldots, m^+\}$ we have
\begin{equation}
\label{corollary initial condition vs projection equation 1}
U^{(j)}(0)=P_j \mod \Psi^{-\infty}.
\end{equation}
\end{corollary}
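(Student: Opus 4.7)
The plan is to derive the corollary as a direct consequence of Theorem~\ref{theorem initial condition vs projection} combined with the initial condition \eqref{initial condition} and the resolution of the identity from Theorem~\ref{theorem results from part 1}(a)(iv). The strategy is to sandwich $U^{(j)}(0)$ between two applications of the $P_l$'s: one on the left, which cuts down the sum over $l$ to a single term via Theorem~\ref{theorem initial condition vs projection}, and one sum over $j$ on the right, invoked through the initial condition to identify what that single term actually is.

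Concretely, first I would specialise Theorem~\ref{theorem initial condition vs projection} to $t=0$ to obtain
\[
P_l\,U^{(j)}(0)=0\mod\Psi^{-\infty},\qquad l\ne j.
\]
Then, using property (iv) of Theorem~\ref{theorem results from part 1}, I would write
\[
U^{(j)}(0)=\sum_l P_l\,U^{(j)}(0)\mod\Psi^{-\infty}=P_j\,U^{(j)}(0)\mod\Psi^{-\infty},
\]
so that only the diagonal term $l=j$ survives modulo a smoothing operator.

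Next I would apply $P_j$ on the left to the initial condition \eqref{initial condition} to get
\[
P_j=\sum_k P_j\,U^{(k)}(0)\mod\Psi^{-\infty}.
\]
Invoking Theorem~\ref{theorem initial condition vs projection} once more, every term with $k\ne j$ vanishes modulo $\Psi^{-\infty}$, leaving $P_j=P_j\,U^{(j)}(0)\mod\Psi^{-\infty}$. Combining this with the previous identity yields $U^{(j)}(0)=P_j\mod\Psi^{-\infty}$, which is \eqref{corollary initial condition vs projection equation 1}.

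There is no real obstacle here beyond organising the bookkeeping of indices: the heavy lifting has already been done in Theorem~\ref{theorem initial condition vs projection}. Indeed, the only subtle point is that one needs to invoke Theorem~\ref{theorem initial condition vs projection} twice, with different choices of the pair $(l,j)$, in order both to collapse the left-sided sum $\sum_l P_l$ and the right-sided sum $\sum_k U^{(k)}(0)$ down to the single diagonal term.
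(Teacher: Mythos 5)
Your proposal is correct and follows essentially the same route as the paper: invoke Theorem~\ref{theorem initial condition vs projection} at $t=0$ together with Theorem~\ref{theorem results from part 1}(a)(iv) to collapse $U^{(j)}(0)$ to $P_j\,U^{(j)}(0)$, then left-multiply the initial condition \eqref{initial condition} by $P_j$ and use Theorem~\ref{theorem initial condition vs projection} again to identify $P_j\,U^{(j)}(0)$ with $P_j$. The only cosmetic difference is that the paper first rewrites $\mathrm{Id}$ as $\sum_j P_j$ before hitting with $P_l$, whereas you apply $P_j$ directly to $\mathrm{Id}$; both steps reduce to the same use of the almost-idempotency/orthogonality of the projections.
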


\begin{proof}
Theorem~\ref{theorem results from part 1}(a)(iv) and formula \eqref{initial condition} imply
\begin{equation}
\label{proof corollary initial condition vs projection equation 1}
\sum_j U^{(j)}(0)=\sum_j P_j \mod \Psi^{-\infty},
\end{equation}
whereas
Theorem~\ref{theorem results from part 1}(a)(iv)
and
Theorem~\ref{theorem initial condition vs projection} imply
\begin{equation}
\label{proof corollary initial condition vs projection equation 2}
U^{(j)}(0)=P_j U^{(j)}(0) \mod \Psi^{-\infty}.
\end{equation}
Acting with $P_l$ on the left in \eqref{proof corollary initial condition vs projection equation 1} and using, once again,
Theorem~\ref{theorem initial condition vs projection},
we obtain
\begin{equation}
\label{proof corollary initial condition vs projection equation 3}
P_lU^{(l)}(0)=P_l \mod \Psi^{-\infty}.
\end{equation}
Combining \eqref{proof corollary initial condition vs projection equation 2} and \eqref{proof corollary initial condition vs projection equation 3} we arrive at \eqref{corollary initial condition vs projection equation 1}.
\end{proof}

We are now in a position to prove Theorem~\ref{Main results theorem 7}.

\begin{proof}[Proof of Theorem~\ref{Main results theorem 7}]
The first equality, namely
%\begin{equation*}
%\label{Proof of Theorem 2.4 equation 7}
$U^{(j)}(t)=P_jU(t) \mod C^{\infty}(\mathbb{R};\Psi^{-\infty})$,
%\end{equation*}
follows immediately from \eqref{U(t) as sum of U(j)j} and Theorem~\ref{theorem initial condition vs projection}.

Let us prove the second equality, namely
\begin{equation}
\label{Proof of Theorem 2.4 equation 1}
P_j U(t)=U(t)P_j \mod C^{\infty}(\mathbb{R};\Psi^{-\infty}).
\end{equation}

Put
%\begin{equation*}
%\label{Proof of Theorem 2.4 equation 2}
$R(t):=U(-t)P_j U(t)-P_j$.
%\end{equation*}
Then, in view of \eqref{definition of propagator}, $R(t)$ satisfies 
\begin{equation}
\label{Proof of Theorem 2.4 equation 3}
\partial_t R(t)=-i\left(  -AU(-t)P_jU(t)+U(-t)P_jAU(t) \right),
\end{equation}
\begin{equation}
\label{Proof of Theorem 2.4 equation 4}
R(0)=0.
\end{equation}
As $A$ commutes with $U(-t)$ and, modulo $\Psi^{-\infty}$, with $P_j$, formula \eqref{Proof of Theorem 2.4 equation 3} implies
\begin{equation}
\label{Proof of Theorem 2.4 equation 5}
\partial_t R(t)=iU(-t)[A,P_j]U(t)=0\mod C^\infty(\mathbb{R};\Psi^{-\infty}).
\end{equation}
Combining \eqref{Proof of Theorem 2.4 equation 5} and \eqref{Proof of Theorem 2.4 equation 4} we obtain
%\begin{equation*}
%\label{Proof of Theorem 2.4 equation 6}
$R(t)=0 \mod C^{\infty}(\mathbb{R};\Psi^{-\infty})$,
%\end{equation*}
which gives us \eqref{Proof of Theorem 2.4 equation 1}.
\end{proof}

In fact, Theorem~\ref{theorem initial condition vs projection} and Theorem~\ref{Main results theorem 7} imply the following stronger result.

\begin{corollary}
\label{corollary commuation of U(j) and P_j}
We have
\begin{equation}
\label{corollary commuation of U(j) and P_j equation 0}
P_j\,U^{(j)}(t)=U^{(j)}(t)P_j=U^{(j)}(t) \mod C^{\infty}(\mathbb{R};\Psi^{-\infty}) \quad \text{for all}\quad j
\end{equation}
and
\begin{equation}
\label{corollary commuation of U(j) and P_j equation 1}
P_l\,U^{(j)}(t)=U^{(j)}(t)P_l=0 \mod C^{\infty}(\mathbb{R};\Psi^{-\infty}) \quad \text{for}\quad l\ne j.
\end{equation}
\end{corollary}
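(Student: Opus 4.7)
The corollary is essentially a bookkeeping exercise that assembles Theorem~\ref{Main results theorem 7}, Theorem~\ref{theorem initial condition vs projection} and the algebraic identities (iii)--(iv) of Theorem~\ref{theorem results from part 1}(a). My plan is to prove the off-diagonal statements \eqref{corollary commuation of U(j) and P_j equation 1} first and then deduce the diagonal ones \eqref{corollary commuation of U(j) and P_j equation 0} from the almost-partition-of-unity $\sum_l P_l = \mathrm{Id} \mod \Psi^{-\infty}$.

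For the off-diagonal case $l\ne j$: the identity $P_l\,U^{(j)}(t)=0 \mod C^\infty(\mathbb{R};\Psi^{-\infty})$ is precisely the statement of Theorem~\ref{theorem initial condition vs projection}, so nothing needs to be done there. For its right-handed companion $U^{(j)}(t)\,P_l=0 \mod C^\infty(\mathbb{R};\Psi^{-\infty})$ I would invoke the factorisation $U^{(j)}(t)=U(t)\,P_j \mod C^\infty(\mathbb{R};\Psi^{-\infty})$ from Theorem~\ref{Main results theorem 7}, which gives
\begin{equation*}
U^{(j)}(t)\,P_l \;=\; U(t)\,P_j\,P_l \mod C^\infty(\mathbb{R};\Psi^{-\infty}),
\end{equation*}
and then close the argument with property (iii) of Theorem~\ref{theorem results from part 1}(a), namely $P_j P_l = 0 \mod \Psi^{-\infty}$ for $l\ne j$, noting that composition on the left with the one-parameter family $U(t)$, which is smooth in $t$, preserves membership in $C^\infty(\mathbb{R};\Psi^{-\infty})$.

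For the diagonal case, I would apply $\sum_l P_l=\mathrm{Id}\mod\Psi^{-\infty}$ on the left of $U^{(j)}(t)$ and split the sum as
\begin{equation*}
U^{(j)}(t) \;=\; \sum_l P_l\,U^{(j)}(t) \;=\; P_j\,U^{(j)}(t) + \sum_{l\ne j} P_l\,U^{(j)}(t) \mod C^\infty(\mathbb{R};\Psi^{-\infty}),
\end{equation*}
and then use the off-diagonal identities just established to kill every term with $l\ne j$. This yields $U^{(j)}(t)=P_j\,U^{(j)}(t)\mod C^\infty(\mathbb{R};\Psi^{-\infty})$. The mirror-image computation, applying $\sum_l P_l$ on the right, produces $U^{(j)}(t)=U^{(j)}(t)\,P_j\mod C^\infty(\mathbb{R};\Psi^{-\infty})$, completing \eqref{corollary commuation of U(j) and P_j equation 0}.

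There is no real obstacle; the corollary is purely a consequence of preceding results. The only point demanding a pinch of care is verifying that each identity is legitimately preserved under composition with a time-dependent family of operators, so that errors stay in $C^\infty(\mathbb{R};\Psi^{-\infty})$ rather than merely in $\Psi^{-\infty}$ for each fixed $t$; this is immediate because $U(t)$ and $U^{(j)}(t)$ are smooth in $t$ as operator-valued families and the $P_j$ are $t$-independent.
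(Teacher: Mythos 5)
Your proof is correct and uses essentially the same ingredients and strategy as the paper: Theorem~\ref{theorem initial condition vs projection}, Theorem~\ref{Main results theorem 7}, and the algebraic properties (iii)--(iv) of the projections, assembled by composing with $\sum_l P_l = \mathrm{Id} \mod \Psi^{-\infty}$ on each side. The only cosmetic difference is ordering: you establish the off-diagonal vanishing first (via the clean factorisation $U^{(j)}(t)=U(t)P_j$) and read off the diagonal identities from the almost-partition of unity, whereas the paper derives $P_jU^{(j)}(t)=U^{(j)}(t)$ first and then passes through the intermediate identity $U^{(j)}(t)=\sum_l P_lU^{(l)}(t)P_j$; your route is, if anything, slightly more streamlined.
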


\begin{proof}
Theorem~\ref{theorem initial condition vs projection} implies
\begin{equation}
\label{proof corollary commuation of U(j) and P_j equation 0}
U^{(j)}(t)=P_j\,U^{(j)}(t) \mod C^{\infty}(\mathbb{R};\Psi^{-\infty}).
\end{equation}
Substituting \eqref{U(t) as sum of U(j)j} into \eqref{Proof of Theorem 2.4 equation 1} and using \eqref{proof corollary commuation of U(j) and P_j equation 0} we obtain
\begin{equation}
\label{proof corollary commuation of U(j) and P_j equation 1}
U^{(j)}(t)=\sum_l P_l \,U^{(l)}(t) P_j \mod C^{\infty}(\mathbb{R};\Psi^{-\infty}).
\end{equation}
Multiplying \eqref{proof corollary commuation of U(j) and P_j equation 1} by $P_j$ on the left and using \eqref{proof corollary commuation of U(j) and P_j equation 0} once again we get
\begin{equation}
\label{proof corollary commuation of U(j) and P_j equation 2}
P_j U^{(j)}(t)=U^{(j)}(t)P_j \mod C^{\infty}(\mathbb{R};\Psi^{-\infty}).
\end{equation}
Formulae \eqref{proof corollary commuation of U(j) and P_j equation 0} and \eqref{proof corollary commuation of U(j) and P_j equation 2} give us \eqref{corollary commuation of U(j) and P_j equation 0}.

Now, formula \eqref{corollary commuation of U(j) and P_j equation 0}
implies that for $l\ne j$ we have
\begin{equation}
\label{proof corollary commuation of U(j) and P_j equation 3}
U^{(j)}(t)P_l=U^{(j)}(t)P_jP_l=0\mod C^{\infty}(\mathbb{R};\Psi^{-\infty}).
\end{equation}
Formula \eqref{proof corollary commuation of U(j) and P_j equation 3} and Theorem~\ref{theorem initial condition vs projection} give us \eqref{corollary commuation of U(j) and P_j equation 1}.
\end{proof}

\begin{remark}
\label{remark about weaker result from Dirac}
Note that a weaker version of Theorem~\ref{Main results theorem 7} was, effectively, obtained in \cite[Section~3]{dirac}. More precisely, it was shown that
\begin{equation*}
\label{remark about weaker result from Dirac equation 1}
\sum_{j=1}^{m^+} U^{(j)}(t)=\sum_{j=1}^{m^+} P_j U(t) \mod C^\infty(\mathbb{R};\Psi^{-\infty}),
\end{equation*}
\begin{equation*}
\label{remark about weaker result from Dirac equation 2}
\sum_{j=-m^-}^{-1} U^{(j)}(t)=\sum_{j=-m^-}^{-1} P_j U(t) \mod C^\infty(\mathbb{R};\Psi^{-\infty}).
\end{equation*}
To see that this is the case, one needs to combine \cite[Theorem~3.3]{dirac}  with \cite[Theorem~2.7]{part1}.

Note also that in \cite[Sec.~5]{bolte} the authors analysed, in a similar spirit,
the localisation of the propagator in a given spectral window of the operator $A$, albeit in a somewhat different setting.  
The use of pseudodifferential projections in the study of the unitary evolution for matrix operators was employed in \cite{brummelhuis} as well, in the semiclassical setting and under additional assumptions on $A$, in the context of Egorov-type theorems. See also \cite{cordes3}.
\end{remark}

Theorem~\ref{Main results theorem 7} tells us that pseudodifferential projections decompose $L^2(M)$ into almost-orthogonal almost-invariant subspaces under the unitary time evolution. Namely, if $v\in P_j L^2(M)$ then
\begin{equation*}
\label{remark about weaker result from Dirac equation 3}
U(t)v=U^{(j)}(t)v  \mod C^\infty(\mathbb{R}\times M),
\end{equation*}
\begin{equation*}
\label{remark about weaker result from Dirac equation 4}
U^{(l)}(t)v =0  \mod C^\infty(\mathbb{R}\times M) \quad \text{for}\quad l\ne j.
\end{equation*}

\subsection{Nonnegative second order operators}
\label{Nonnegative second order operators}

In this subsection we will show that one can obtain results analogous to those from subsection~\ref{First order operators} for nonnegative second order operators.

\

Let $A\in \Psi^{2}$ be a nonnegative self-adjoint elliptic operator and suppose that its principal symbol has simple eigenvalues. As explained in Section~\ref{Main results}, we define its propagator to be
\begin{equation}
\label{propagator second order}
U(t):=e^{-it \sqrt{A}}.
\end{equation}
The fact that $\sqrt{A}$ is a well-defined pseudodifferential operator follows, for example, from \cite{seeley}. 
The unitary operator \eqref{propagator second order} is the solution operator of the first order hyperbolic pseudo\-differential system
%\begin{equation*}
%\label{second order operator first order equation}
$(-i\partial_t+\sqrt{A})f=0$, 
%\end{equation*}
subject to the initial condition $f|_{t=0}=f_0$.
Of course, the knowledge of $U(t)$ is sufficient for the construction of the general solution of the second order hyperbolic system
$(\partial^2_t -A)f=0$,
subject to initial conditions $f|_{t=0}=f_0$, $(\partial_t f)|_{t=0}=f_1$. Indeed, we have
\begin{equation*}
\label{general solution second order equation}
f=\cos(t\,\sqrt{A})f_0+ A^{-1/2}\sin(t\,\sqrt{A})f_1+t \sum_{k\,:\,\lambda_k=0}\langle u_k, f_1\rangle,
\end{equation*}
where $A^{-1/2}$ is the pseudoinverse of $\sqrt{A}$ \cite[Ch.~2 Sec.~2]{rellich}, $2 \cos(t\,\sqrt{A})=U(t)+U(t)^*$ and $2i \sin(t\,\sqrt{A})=U(t)^*-U(t)$.

Let $h^{(j)}(x,\xi)$, $j=1,\ldots, m$, be the eigenvalues of $(\sqrt{A})_\mathrm{prin}=\sqrt{A_\mathrm{prin}}$. Clearly, the $h^{(j)}$'s are positively homogeneous in momentum $\xi$ of degree 1, strictly positive and distinct. This follows from the fact that $A$ is nonnegative, elliptic and $A_\mathrm{prin}$ has simple eigenvalues. Let $\varphi^{(j)}$, $j=1,\ldots, m^+$ be phase functions satisfying conditions $(i)$--$(iv)$ from subsection \ref{First order operators}. 

Then the operator $U(t)$ can be constructed explicitly, modulo $C^\infty(\mathbb{R};\Psi^{-\infty})$, as the sum of $m$ oscillatory integrals $U^{(j)}(t)$ of the form \eqref{oscillatory integral U j}. Remarkably, the amplitude of the $U^{(j)}(t)$ can be determined without the need of extracting the square root of $A$. Indeed, one can retrace the construction algorithm outlined in subsection \ref{First order operators} replacing \eqref{equation satisfied by U j} with
\begin{equation}
\label{equation satisfied by U j second order}
(\partial_t^2-A)U^{(j)}(t)=0 \mod C^{\infty}(\mathbb{R};\Psi^{-\infty}).
\end{equation}
The use in \eqref{equation satisfied by U j second order} of the second order operator $\partial_t^2-A$  as opposed to its ``half-wave'' version $-i\,\partial_t+\sqrt{A}$  is justified by \cite[Theorem 3.2.1]{SaVa}.

\begin{proposition}
\label{proposition projections A vs root A}
Let $P_j$ and $\tilde P_j$, $j=1, \ldots,m$, be the pseudodifferential projections uniquely determined, modulo $\Psi^{-\infty}$, by $A$ and $\sqrt{A}$ respectively, in accordance with Theorem~\ref{theorem results from part 1}(a). Then
\begin{equation}
\label{projections A vs root A equation 1}
P_j=\tilde P_j \mod \Psi^{-\infty}
\end{equation}
for all $j$.
\end{proposition}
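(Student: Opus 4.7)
The plan is to appeal to the uniqueness statement in Theorem~\ref{theorem results from part 1}(a). That theorem characterises the projections $P_j$ associated to $A$ as the unique (modulo $\Psi^{-\infty}$) family satisfying the five properties (i)--(v). The strategy is therefore to verify that the projections $\tilde P_j$ built from $\sqrt{A}$ satisfy the corresponding five properties \emph{with $A$ in place of $\sqrt{A}$}, and conclude $\tilde P_j=P_j\mod\Psi^{-\infty}$ by uniqueness.

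First I would check the principal symbol condition (i). Since $(\sqrt{A})_\mathrm{prin}=\sqrt{A_\mathrm{prin}}$ and $A$ is nonnegative elliptic with simple eigenvalues of the principal symbol, the spectral projection of $\sqrt{A_\mathrm{prin}}$ corresponding to $\sqrt{h^{(j)}(x,\xi)}$ coincides with the spectral projection $P^{(j)}(x,\xi)$ of $A_\mathrm{prin}$ corresponding to $h^{(j)}(x,\xi)$; moreover, since $t\mapsto\sqrt{t}$ is strictly monotone on $(0,\infty)$, the enumeration in increasing order of eigenvalues is preserved. Hence $(\tilde P_j)_\mathrm{prin}=P^{(j)}$. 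Properties (ii), (iii), (iv) involve only the $\tilde P_j$ amongst themselves and carry over verbatim, with no reference to the operator from which the projections are built.

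The only nontrivial point is property (v), namely showing
\begin{equation*}
[A,\tilde P_j]=0\mod\Psi^{-\infty}.
\end{equation*}
Here I would exploit the factorisation $A=\sqrt{A}\,\sqrt{A}$ (valid modulo $\Psi^{-\infty}$, and in fact exactly, by functional calculus) together with the Leibniz rule for commutators:
\begin{equation*}
[A,\tilde P_j]=\sqrt{A}\,[\sqrt{A},\tilde P_j]+[\sqrt{A},\tilde P_j]\,\sqrt{A}.
\end{equation*}
Since $[\sqrt{A},\tilde P_j]\in\Psi^{-\infty}$ by the defining property of $\tilde P_j$, and since composition on either side with the order-one operator $\sqrt{A}$ preserves $\Psi^{-\infty}$, the right-hand side lies in $\Psi^{-\infty}$, as required.

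Having verified all five conditions, the uniqueness clause of Theorem~\ref{theorem results from part 1}(a) immediately yields \eqref{projections A vs root A equation 1}. The only subtle step is the commutator argument for (v), but it reduces at once to the standard Leibniz identity; no obstacle of substance is present. The same argument, of course, extends to $A^{1/2n}$ and justifies the remark following Theorem~\ref{Main results theorem 8}.
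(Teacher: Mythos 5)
Your proposal is correct and follows essentially the same route as the paper: identify the principal symbols $(\tilde P_j)_\mathrm{prin}=P^{(j)}$, observe that commutation with $\sqrt{A}$ modulo $\Psi^{-\infty}$ implies commutation with $A$ modulo $\Psi^{-\infty}$, and then invoke the uniqueness clause of Theorem~\ref{theorem results from part 1}(a). The only difference is cosmetic: you write out the Leibniz identity $[A,\tilde P_j]=\sqrt{A}\,[\sqrt{A},\tilde P_j]+[\sqrt{A},\tilde P_j]\,\sqrt{A}$ explicitly, where the paper simply states the implication as ``clear'' and appeals in addition to \cite[Theorem~4.1]{part1} for the uniqueness.
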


\begin{proof}
We have
\begin{equation}
\label{proof projections A vs root A equation 1}
A_\mathrm{prin}=\sum_{l=1}^m h^{(l)} P^{(l)}
\end{equation}
and
\begin{equation*}
\label{proof projections A vs root A equation 2}
(\sqrt{A})_\mathrm{prin}=\sum_{l=1}^m \sqrt{h^{(l)}} P^{(l)},
\end{equation*}
therefore
%\begin{equation*}
%\label{proof projections A vs root A equation 3}
$(P_j)_\mathrm{prin}=(\tilde P_j)_\mathrm{prin}=P^{(j)}$.
%\end{equation*}
As $[\tilde P_j, \sqrt{A}]=0 \mod\Psi^{-\infty}$, clearly $[\tilde P_j,A]=0\mod \Psi^{-\infty}$. Then the identity \eqref{projections A vs root A equation 1} follows from \cite[Theorem~4.1]{part1} and the uniqueness of pseudodifferential projections.
\end{proof}

It is worth remarking that the claim of Proposition~\ref{proposition projections A vs root A} is a nontrivial property of our pseudo\-differential projections which cannot be obtained by simply looking at the functional calculus of $A$ and $\sqrt{A}$.

\begin{proof}[Proof of Theorem~\ref{Main results theorem 8}]
Equation \eqref{Main results theorem 8 equation 1} follows immediately by applying Theorem~\ref{Main results theorem 7} to $\sqrt{A}$ and using Proposition~\ref{proposition projections A vs root A}.
\end{proof}

\section{Refined spectral asymptotics}
\label{Refined spectral asymptotics}

Theorems~\ref{Main results theorem 4}--\ref{Main results theorem 6} open the way to computing spectral asymptotics for each of the $m$ families which the spectrum of $A$ partitions into. We will provide here a brief description of how the results of this paper can be used to refine our understanding of results available in the literature, focussing on first order operators. Further on we assume that $A\in \Psi^{1}$.

\begin{remark}
One could, in principle, perform the forthcoming argument for nonnegative operators of even order, but this would require a lengthy discussion and would substantially increase the size of the paper. For this reason we decided to refrain from discussing refined spectral asymptotics in greater generality in the current paper.
\end{remark}

Let $N^+(\lambda)$ be the positive counting function of $A$ and let $N^+_j(\lambda)$ be the positive counting function of $A_j$, $j=1,\ldots, m^+$, defined in accordance with \eqref{Weyl asymptotics for elliptic systems counting function definition}.
Establishing a precise relation between $N^+(\lambda)$ and the $N_j^+(\lambda)$, $j=1,\ldots, m^+$, is a challenging task, and it is not \emph{a priori} clear whether a simple quantitative relation can be established in the general case. The issue at hand is that we are dealing with discontinuous functions which can experience massive jumps in the presence of spectral clusters.

What one can do is to establish a relation between the Weyl coefficients of $N^+(\lambda)$ and those of the $N_j^+(\lambda)$. 

Let
\begin{equation*}
\label{local conting function A^+}
N^+(x;\lambda):=
\begin{cases}
0 & \text{for}\ \lambda\le 0,\\
\sum_{k\,:\,0<\lambda_k<\lambda} [v_k(x)]^* \,v_k(x) &\text{for}\ \lambda> 0,\\
\end{cases}
\end{equation*}
be the positive \emph{local} counting function of $A$.
In an analogous manner, we define positive \emph{local} counting functions $N_j^+(x;\lambda)$ for each of the $A_j$, $j=1,\ldots,m^+$.

Let $\widehat{\mu}:\mathbb{R}\to \mathbb{C}$ be a smooth function such that $\widehat{\mu}= 1$ in some neighbourhood of the origin and 
$\operatorname{supp} \widehat{\mu}\subset(-T_0,T_0)$,
where $T_0$ is the infimum of lengths of all the Hamiltonian loops (see~\eqref{hamilton's equations})
originating from all the points of the manifold. Let $\mu$ be the inverse Fourier transform of $\widehat{\mu}$, where we adopt the convention
\begin{equation*}
\mathcal{F}[\mu](t)=\widehat{\mu}(t)=\int_{-\infty}^{+\infty} e^{-it\lambda} \mu(\lambda) \,d\lambda,
\qquad
\mathcal{F}^{-1}[\widehat{\mu}](\lambda)=\mu(\lambda)=\dfrac{1}{2\pi}\int_{-\infty}^{+\infty} e^{it\lambda}\,\widehat{\mu\,}(t) \,dt,
\end{equation*}
for the Fourier transform and inverse Fourier transform, respectively.

 It is known \cite{DuGu,Ivr80,Ivr84,Ivr98,SaVa} that the mollified derivative of the positive local counting function admits a complete asymptotic expansion in integer powers of $\lambda$:
\begin{equation}
\label{expansion for mollified derivative of counting function}
((N^+)'*\mu)(x,\lambda)=
a_{d-1}(x)\,\lambda^{d-1}
+
a_{d-2}(x)\,\lambda^{d-2}
+\dots
\quad
\text{as}
\quad
\lambda\to+\infty,
\end{equation}
\begin{equation}
\label{expansion for mollified derivative of counting function with j}
((N^+_j)'*\mu)(x,\lambda)=
a_{d-1}^{(j)}(x)\,\lambda^{d-1}
+
a_{d-2}^{(j)}(x)\,\lambda^{d-2}
+\dots
\quad
\text{as}
\quad
\lambda\to+\infty.
\end{equation}
Here $*$ stands for the convolution in the variable $\lambda$ and the prime stands for differentiation with respect to $\lambda$. The functions appearing as coefficients of powers of $\lambda$ in the asymptotic expansions \eqref{expansion for mollified derivative of counting function} and \eqref{expansion for mollified derivative of counting function with j} are called \emph{Weyl coefficients}.

\begin{proposition}
\label{proposition U(j) vs U_Aj}
For $j\in\{1,\ldots, m^+\}$ we have
\begin{equation}
\label{proposition U(j) vs U_Aj equation 1}
U_{A_j}^+(t)=U^{(j)}(t) \mod C^{\infty}(\mathbb{R};\Psi^{-\infty}),
\end{equation}
where $U_{A_j}^+(t):=\theta(A_j)\,e^{-it A_j}$ is the positive propagator of the operator $A_j$.
\end{proposition}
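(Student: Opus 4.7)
The plan is to reduce the proposition to Theorem~\ref{Main results theorem 7} via two intermediate identifications: (a) $\theta(A_j) = P_j \mod \Psi^{-\infty}$, and (b) $e^{-itA_j}\, P_j = e^{-itA}\, P_j \mod C^\infty(\mathbb{R}; \Psi^{-\infty})$. Granted these, Theorem~\ref{Main results theorem 7} immediately gives
\[
U_{A_j}^+(t) = \theta(A_j)\, e^{-itA_j} = P_j\, e^{-itA_j} = e^{-itA}\, P_j = U^{(j)}(t) \mod C^\infty(\mathbb{R}; \Psi^{-\infty}).
\]

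To prove (a), I would exploit the fact that $A_j$ itself satisfies the standing hypotheses of Section~\ref{Statement of the problem}: by \eqref{Aj prin}, its principal symbol has simple eigenvalues, the unique positive one being $h^{(j)}$ with eigenprojection $P^{(j)}$. A short calculation based on \eqref{operators Aj}, \eqref{Aj almost commute}, and the relations from Theorem~\ref{theorem results from part 1}(a) shows that each $P_l$ commutes with $A_j$ modulo $\Psi^{-\infty}$ and has principal symbol equal to the spectral eigenprojection of $(A_j)_{\mathrm{prin}}$ associated to the corresponding eigenvalue; by the uniqueness clause of Theorem~\ref{theorem results from part 1}(a) applied to $A_j$, the family $\{P_l\}$ is precisely the system of pseudodifferential projections of $A_j$. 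On the other hand, Seeley's functional calculus ensures $\theta(A_j) \in \Psi^0$ with principal symbol $\theta((A_j)_{\mathrm{prin}}) = P^{(j)}$, and $\theta(A_j)$ is an exact self-adjoint idempotent commuting with $A_j$. Uniqueness of the pseudodifferential projection associated to the (unique) positive eigenvalue of $(A_j)_{\mathrm{prin}}$ then forces $\theta(A_j) = P_j \mod \Psi^{-\infty}$.

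To prove (b), the key observation is that \eqref{operators Aj} combined with the orthogonality $P_l P_j = 0 \mod \Psi^{-\infty}$ for $l \ne j$ gives $A_j P_j = A P_j \mod \Psi^{-\infty}$, so $A$ and $A_j$ are indistinguishable on the range of $P_j$ up to smoothing. Setting $V(t) := e^{-itA}\, P_j$ and $W(t) := e^{-itA_j}\, P_j$, both satisfy $V(0) = W(0) = P_j$; using Theorem~\ref{Main results theorem 7} to commute $P_j$ past $e^{-itA}$, together with the preceding identity, one verifies that $(-i\partial_t + A_j)(V - W) \in C^\infty(\mathbb{R}; \Psi^{-\infty})$. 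A Duhamel estimate exploiting the $L^2$-unitarity of $e^{-itA_j}$ and its preservation of smoothing classes then propagates this smoothness to $V(t) - W(t)$ itself, for all $t \in \mathbb{R}$.

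The main obstacle I anticipate is the appeal to Seeley's functional calculus in step (a), specifically the assertion that $\theta(A_j)$ lies in $\Psi^0$ with the expected principal symbol. An alternative, purely spectral-theoretic route sidestepping this would combine Proposition~\ref{proposition on action of projections on eigenfunctions}(b) with its analogue for the negative spectrum of $A_j$ (derived by applying the arguments of Section~\ref{Spectral analysis: partitioning the spectrum} to $-A_j$) to show that $(\theta(A_j) - P_j)\, u = O(k^{-\infty})$ on every normalised eigenfunction of $A_j$, and then conclude $\theta(A_j) - P_j \in \Psi^{-\infty}$ by testing against a complete orthonormal eigenbasis.
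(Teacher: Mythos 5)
Your proof is correct and takes essentially the same approach as the paper's: both rest on the identification $\theta(A_j)=P_j\bmod\Psi^{-\infty}$ followed by a uniqueness-of-solutions argument for a first-order hyperbolic equation modulo $C^\infty(\mathbb{R};\Psi^{-\infty})$. The only structural difference is that you factor through $U(t)P_j$ and invoke Theorem~\ref{Main results theorem 7}, whereas the paper compares $U^+_{A_j}(t)$ and $U^{(j)}(t)$ directly as solutions of $\bigl(-i\partial_t+A_j\bigr)(\,\cdot\,)=0\bmod C^\infty(\mathbb{R};\Psi^{-\infty})$ sharing the initial condition $P_j$.
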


\begin{proof}
Let $P_l$ and $\widetilde P_l$ be the pseudodifferential projections associated with $A$ and $A_j$ respectively, in accordance with Theorem~\ref{theorem results from part 1}(a). Recalling \eqref{operators Aj} and arguing as in the proof of Proposition~\ref{proposition projections A vs root A}, it is easy to see that the $\tilde{P}_l$ are just a reshuffling of the $P_l$. In particular, we have
\begin{equation}
\label{proof proposition U(j) vs U_Aj equation 1}
P_j=\tilde P_1 \mod \Psi^{-\infty}.
\end{equation}
Indeed, $(A_j)_\mathrm{prin}$ has only one positive eigenvalue, $h^{(j)}$, see~\eqref{Aj prin}.

Then, formula \eqref{proof proposition U(j) vs U_Aj equation 1} and Corollary~\ref{corollary initial condition vs projection} imply
\begin{equation}
\label{proof proposition U(j) vs U_Aj equation 2}
U_{A_j}^+(0)=U^{(j)}(0)=P_j \mod  \Psi^{-\infty}.
\end{equation}
%whereas \eqref{proof proposition U(j) vs U_Aj equation 1}  and \eqref{corollary commuation of U(j) and P_j equation 0} imply
%\begin{equation}
%\label{proof proposition U(j) vs U_Aj equation 3}
%U_{A_j}^+(t)=P_jU_{A_j}^+(t)  \mod C^\infty(\mathbb{R};\Psi^{-\infty}),
%\qquad \quad
%U^{(j)}(t) = P_jU^{(j)}(t) \mod C^\infty(\mathbb{R};\Psi^{-\infty}).
%\end{equation}

Substituting \eqref{corollary commuation of U(j) and P_j equation 0} into \eqref{equation satisfied by U j} we obtain
\begin{equation*}
%\label{proof proposition U(j) vs U_Aj equation 4}
(-i\partial_t+AP_j)U^{(j)}(t)=0\mod C^\infty(\mathbb{R}; \Psi^{-\infty}).
\end{equation*}
In view of formula \eqref{AP_l = A_l}, the above equation can be recast as
\begin{equation}
\label{proof proposition U(j) vs U_Aj equation 4}
(-i\partial_t+A_j)U^{(j)}(t)=0\mod C^\infty(\mathbb{R}; \Psi^{-\infty}).
\end{equation}

Now, $U^+_{A_j}(t)$ also satisfies \eqref{proof proposition U(j) vs U_Aj equation 4} by definition.
Thus, $U^{(j)}(t)$ and $U^+_{A_j}(t)$ satisfy the same first order hyperbolic  equation \eqref{proof proposition U(j) vs U_Aj equation 4} with the same initial condition \eqref{proof proposition U(j) vs U_Aj equation 2}. This gives us~\eqref{proposition U(j) vs U_Aj equation 1}.
\end{proof}

\begin{theorem}
\label{theorem refined spectral asymptotics}

\begin{enumerate}[(a)]
\item
We have
\begin{equation}
\label{theorem refined spectral asymptotics equation 1}
((N^+_j)'*\mu)(x,\lambda)=\mathcal{F}^{-1}[\operatorname{tr} u^{(j)}(t,x,x) \,\widehat{\mu}(t)]+O(\lambda^{-\infty})\quad \text{as}\quad \lambda\to+\infty,
\end{equation}
where $u(t,x,y)$ is the Schwartz kernel of $U^{(j)}(t)$ and $\operatorname{tr}$ stands for the matrix trace.

\item 
The Weyl coefficients of $A$ and $A_j$, $j=1,\ldots, m^+$, are related as
\begin{equation}
\label{theorem refined spectral asymptotics equation 2}
a_k(x)=\sum_{j=1}^{m^+} a_k^{(j)}(x), \qquad k=d-1,d-2,\ldots.
\end{equation}

\item
The first two Weyl coefficients of $A_j$ read
\begin{equation}
\label{theorem refined spectral asymptotics equation 3}
a_{d-1}^{(j)}(x)=\frac{d}{(2\pi)^d} \int\limits_{h^{(j)}(x,\xi)<1}d\xi\,,
\end{equation}
\begin{multline}
\label{theorem refined spectral asymptotics equation 4}
a_{d-2}^{(j)}(x)=-\frac{d(d-1)}{(2\pi)^d}
\ \int\limits_{h^{(j)}(x,\xi)<1}
\operatorname{tr}\left(
P^{(j)}A_\mathrm{sub}
+\frac i2\{P^{(j)}, P^{(j)}\}A_\mathrm{prin}
\right.
\\
\left.
-\frac {1}{d-1}h^{(j)}(P_j)_\mathrm{sub}
\right)
(x,\xi)\,
d\xi\,,
\end{multline}
where
curly brackets denote the Poisson bracket
%\begin{equation*}
%\label{poisson brackets}
$\{B,C\}:=\sum_{\alpha=1}^d(B_{x^\alpha} C_{\xi_\alpha}- B_{\xi_\alpha} C_{x^\alpha})$
%\end{equation*}
on matrix-functions on the cotangent bundle.
\end{enumerate}
\end{theorem}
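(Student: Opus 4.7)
For part~(a), the plan is to apply the standard Fourier Tauberian machinery (see, e.g., \cite{SaVa}). The spectral decomposition of $U^+_{A_j}(t):=\theta(A_j)\,e^{-itA_j}$ gives that the diagonal trace $\operatorname{tr} u^+_{A_j}(t,x,x)$ is the Fourier transform in $t$ of the distribution $(N^+_j)'(x,\cdot)$. By Proposition~\ref{proposition U(j) vs U_Aj} the Schwartz kernels of $U^+_{A_j}(t)$ and $U^{(j)}(t)$ agree on the diagonal modulo $C^\infty(\mathbb{R}\times M)$; multiplying the smooth remainder by the compactly supported $\widehat{\mu}(t)$ produces a Schwartz function of $t$, whose inverse Fourier transform in $t$ is $O(\lambda^{-\infty})$. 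This yields \eqref{theorem refined spectral asymptotics equation 1}.

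For part~(b), the main step is to establish that
\[
\theta(A)=\sum_{j=1}^{m^+}P_j\mod\Psi^{-\infty}.
\]
I would prove this by testing both sides against eigenfunctions of $A$: if $\lambda_k>0$ then $\theta(A)u_k=u_k$, whereas Proposition~\ref{proposition on action of projections on eigenfunctions}(a) gives $\sum_{j=1}^{m^+}P_ju_k=u_k-\sum_{j=-m^-}^{-1}P_ju_k=u_k+O(k^{-\infty})$; if $\lambda_k<0$ then both sides are $O(k^{-\infty})$ by the analogous estimate obtained by replacing $A$ with $-A$. An elliptic regularity/Weyl's law argument upgrades this pointwise decay to the statement that the difference is in $\Psi^{-\infty}$. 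Combining with Theorem~\ref{Main results theorem 7} gives
\[
U^+(t)=\theta(A)\,e^{-itA}=\sum_{j=1}^{m^+}P_j\,U(t)=\sum_{j=1}^{m^+}U^{(j)}(t)\mod C^\infty(\mathbb{R};\Psi^{-\infty}).
\]
Taking the matrix trace on the diagonal, convolving in $\lambda$ with $\mu$, and applying part~(a) to each summand produces $((N^+)'*\mu)(x,\lambda)=\sum_{j=1}^{m^+}((N^+_j)'*\mu)(x,\lambda)+O(\lambda^{-\infty})$, so that matching coefficients in the asymptotic expansions \eqref{expansion for mollified derivative of counting function} and \eqref{expansion for mollified derivative of counting function with j} delivers \eqref{theorem refined spectral asymptotics equation 2}.

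For part~(c), the leading coefficient \eqref{theorem refined spectral asymptotics equation 3} follows from the standard leading-order Weyl computation applied to $A_j$: since $(A_j)_\mathrm{prin}$ has exactly one positive eigenvalue $h^{(j)}$ with eigenprojection $P^{(j)}$, the phase-space volume governing the leading asymptotic is $\int_{h^{(j)}<1}d\xi$. For the subleading coefficient, I would invoke the formula for the second Weyl coefficient of a first-order elliptic self-adjoint matrix operator derived in \cite{CDV,AFV}: these works decompose the Weyl coefficient of the original operator $A$ into contributions indexed by the eigenvalues of $A_\mathrm{prin}$, with the $j$-th summand given precisely by the right-hand side of \eqref{theorem refined spectral asymptotics equation 4}. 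Proposition~\ref{proposition U(j) vs U_Aj} identifies this $j$-th summand with the unique positive subleading Weyl coefficient of $A_j$, yielding \eqref{theorem refined spectral asymptotics equation 4}.

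The main technical hurdle is the identification $\theta(A)\equiv\sum_{j=1}^{m^+}P_j\mod\Psi^{-\infty}$ in part~(b): the operator $\theta(A)$ is defined via the spectral calculus and is not \emph{a priori} pseudodifferential, so upgrading the superpolynomial decay on eigenfunctions coming from Proposition~\ref{proposition on action of projections on eigenfunctions} to a statement about the operator modulo $\Psi^{-\infty}$ requires Weyl's law \eqref{Proof Main results theorem 4 equation 3} together with Sobolev-type control. Once this is in place, parts~(a) and~(c) reduce to a clean bookkeeping exercise using Proposition~\ref{proposition U(j) vs U_Aj} and the existing literature on Weyl coefficients.
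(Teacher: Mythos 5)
Your plan follows the same route as the paper: part~(a) reduces to Proposition~\ref{proposition U(j) vs U_Aj} together with the known mollified-trace formula from the companion paper, part~(b) sums over $j$ using the decomposition of the propagator, and part~(c) invokes the second-Weyl-coefficient formula from \cite{CDV}. Two comments on the details.

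In part~(b), your detour through the identity $\theta(A)=\sum_{j=1}^{m^+}P_j\mod\Psi^{-\infty}$ is sound but is not quite a fresh hurdle: the paper avoids it entirely by citing the mollified-trace formula for the positive propagator $\theta(A)e^{-itA}$ directly from \cite[Eq.~(8.2)]{dirac}, which already encodes that only the $j>0$ oscillatory integrals contribute (this is, essentially, the content of Remark~\ref{remark about weaker result from Dirac} combined with \cite[Theorem~2.7]{part1}). Your eigenfunction-testing argument, as you note yourself, proves superpolynomial decay on each eigenfunction but does not by itself establish membership in $\Psi^{-\infty}$; the cleanest upgrade is to observe that $\theta(A)$ is a classical $\Psi$DO of order zero (by Seeley-type functional calculus), with principal symbol $\sum_{j>0}P^{(j)}$, commuting with $A$, and then use the uniqueness from Theorem~\ref{theorem results from part 1}(a). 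Since this identity is already established in \cite{part1}, the detour is harmless but unnecessary.

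In part~(c), the claim that the $j$-th summand of \cite[Formula~(1.24)]{CDV} is ``given precisely'' by the right-hand side of \eqref{theorem refined spectral asymptotics equation 4} skips genuine work: the CDV formula is written in terms of Poisson brackets of normalised eigenvectors $v^{(j)}$, whereas \eqref{theorem refined spectral asymptotics equation 4} is phrased in terms of $P^{(j)}$ and the subprincipal symbol $(P_j)_\mathrm{sub}$. Passing from one to the other requires the two nontrivial identities
$\{[v^{(j)}]^*,v^{(j)}\}=i\operatorname{tr}\bigl((P_j)_\mathrm{sub}\bigr)$
and
$\{[v^{(j)}]^*,A_\mathrm{prin},v^{(j)}\}=-\operatorname{tr}\bigl(\{P^{(j)},P^{(j)}\}A_\mathrm{prin}\bigr)+h^{(j)}\{[v^{(j)}]^*,v^{(j)}\}$,
which the paper records explicitly; you would also need to account for the normalisation mismatch (the factors $d$ and $d-1$ arising from the difference between \eqref{expansion for mollified derivative of counting function} and \cite[Formula~(1.6)]{CDV}). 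These are not deep, but leaving them implicit means your part~(c), as written, does not actually yield the stated formula.
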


\begin{proof}
(a) Formula \eqref{theorem refined spectral asymptotics equation 1} follows immediately from Proposition~\ref{proposition U(j) vs U_Aj} and \cite[Equation~(8.2)]{dirac}.

(b) Formula \eqref{theorem refined spectral asymptotics equation 2} is an immediate consequence of \cite[Equation~(8.2)]{dirac}, \eqref{U(t) as sum of U(j)j} and
\eqref{theorem refined spectral asymptotics equation 1}.

(c) Parts (a) and (b) imply that formulae \eqref{theorem refined spectral asymptotics equation 3} and \eqref{theorem refined spectral asymptotics equation 4} can be obtained from \cite[Formula~(1.23)]{CDV} and \cite[Formula~(1.24)]{CDV}, respectively, by dropping the summation over $j$. There is an additional factor $d$ in the RHS of \eqref{theorem refined spectral asymptotics equation 3} and an additional factor $(d-1)$ in the RHS \eqref{theorem refined spectral asymptotics equation 4}: this accounts for the somewhat nonstandard definition of Weyl coefficients adopted in this paper, compare \eqref{expansion for mollified derivative of counting function} and \cite[Formula~(1.6)]{CDV}.

Finally, in recasting \cite[Formula~(1.24)]{CDV} as \eqref{theorem refined spectral asymptotics equation 4} we used the identities
\begin{equation}
\label{proof theorem refined spectral asymptotics equation 1}
\{[v^{(j)}]^*,v^{(j)}\}=i\operatorname{tr}((P_j)_\mathrm{sub})
\end{equation}
and
\begin{equation}
\label{proof theorem refined spectral asymptotics equation 2}
\{
[v^{(j)}]^*,A_\mathrm{prin},v^{(j)} 
\}
=
-\operatorname{tr}(\{P^{(j)}, P^{(j)}\}A_\mathrm{prin})+h^{(j)}\{[v^{(j)}]^*,v^{(j)}\},
\end{equation}
where $v^{(j)}(x,\xi)$ denotes the normalised eigenvector of $A_\mathrm{prin}$ corresponding to the eigenvalue $h^{(j)}$ and
%\begin{equation*}
%\label{generalised poisson brackets}
$\{ B,C,D\}:=\sum_{\alpha=1}^d(B_{x^\alpha} C D_{\xi_\alpha}- B_{\xi_\alpha} C D_{x^\alpha})$
%\end{equation*}
is the generalised Poisson bracket. Formula \eqref{proof theorem refined spectral asymptotics equation 1} follows from \cite[Formula~(1.20)]{CDV} and Corollary~\ref{corollary initial condition vs projection} (see also \cite[Theorem~2.3]{part1}), whereas formula \eqref{proof theorem refined spectral asymptotics equation 2} is obtained via a lengthy but straightforward calculation involving \eqref{proof projections A vs root A equation 1} and properties of pseudodifferential projections.
\end{proof}

In plain English, Theorem~\ref{theorem refined spectral asymptotics} tells us that, when applying Levitan's wave method \cite{levitan} to the computation of spectral asymptotics for first order systems, the oscillatory integral $U^{(j)}(t)$ accounts for precisely the $j$-th of the $m$ sequences of eigenvalues into which the spectrum of $A$ was partitioned in Section~\ref{Spectral analysis: partitioning the spectrum}.

\section*{Acknowledgements}
\addcontentsline{toc}{section}{Acknowledgements}

We are grateful to Grigori Rozenbloum for useful bibliographic suggestions at an early stage of this work
and to David Edmunds for bringing the monograph \cite{schmudgen} to our attention.

MC was supported by a Leverhulme Trust Research Project Grant and by a Research Grant (Scheme 4) of the London Mathematical Society. Both are gratefully acknowledged.

\begin{appendices}

\section{Simultaneous diagonalization of unbounded operators}
\label{appendix on linear algebra}

In this appendix we present some results from functional analysis. The purpose is to provide motivation for
Theorems~\ref{Main results theorem 4}--\ref{Main results theorem 6}
in the main text of the paper. All operators in this appendix are assumed to be linear.

For the sake of clarity, let us start with the finite-dimensional setting.
Let $H$ be an $n$-dimensional complex inner product space.
Given a self-adjoint operator $A:H\to H$ and a number $\lambda\in\mathbb{R}$
we denote by $\Pi^+(A;\lambda)$ the orthogonal projection onto the eigenspaces of $A$
corresponding to eigenvalues greater than zero and less than $\lambda$.
We also employ the notation~\eqref{nonnegative  part of B}.

\begin{theorem}
\label{linear algebra theorem}
Let $A$ and $A_j$, $j=1,\dots,p$, be self-adjoint operators.
Suppose that
\begin{equation}
\label{linear algebra theorem equation 1}
A^+=\sum_{j=1}^pA_j^+.
\end{equation}
\begin{equation}
\label{linear algebra theorem equation 2}
A_j^+A_l^+=0,\qquad j,l=1,\dots,p,\quad j\ne l.
\end{equation}
Then
\begin{equation}
\label{linear algebra theorem equation 3}
\Pi^+(A;\lambda)
=
\sum_{j=1}^p
\Pi^+(A_j;\lambda).
\end{equation}
\end{theorem}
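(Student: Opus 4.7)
The plan is to exploit the orthogonality hypothesis \eqref{linear algebra theorem equation 2} to simultaneously diagonalize the family $\{A_j^+\}$ and to show that on each joint eigenspace at most one of the $A_j^+$ acts non-trivially; relation \eqref{linear algebra theorem equation 1} then pins down $A^+$ on these joint eigenspaces, and the spectral projection $\Pi^+(A;\lambda)$ splits accordingly.

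First I would observe that the hypothesis $A_j^+A_l^+=0$ for $j\neq l$ immediately implies pairwise commutativity of the (self-adjoint, positive semi-definite) operators $A_j^+$. Hence, by the finite-dimensional spectral theorem, $H$ admits an orthonormal basis of joint eigenvectors of $A_1^+,\dots,A_p^+$. If $v$ is such a joint eigenvector with $A_j^+v=\mu_j v$, then $0=A_j^+A_l^+v=\mu_j\mu_l v$ forces $\mu_j\mu_l=0$ whenever $j\neq l$: at most one of the eigenvalues $\mu_j$ is non-zero. Writing
\[
V_j:=\operatorname{ran}(A_j^+)=(\ker A_j^+)^\perp,
\]
this means that the $V_j$ are pairwise orthogonal and that $V_j\subset\ker A_l^+$ for $l\neq j$. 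Moreover, $A_j^+$ is strictly positive on $V_j$ and zero on $V_j^\perp$.

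Next I would combine this with \eqref{linear algebra theorem equation 1}. For $v\in V_j$ we have $A_l^+v=0$ for all $l\neq j$, so $A^+v=A_j^+v$; in particular $A^+$ leaves each $V_j$ invariant and acts there as $A_j^+|_{V_j}$, which is positive definite. On the orthogonal complement $W:=\bigl(\bigoplus_j V_j\bigr)^\perp$ every $A_l^+$ vanishes, hence $A^+$ vanishes there too. Consequently
\[
\operatorname{ran}(A^+)=\bigoplus_{j=1}^p V_j,
\qquad
A^+=\bigoplus_{j=1}^p A_j^+\bigr|_{V_j}\ \oplus\ 0_W.
\]
This is the decisive structural identity: $A^+$ is the orthogonal direct sum of the non-degenerate parts of the $A_j^+$.

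Finally I would read off the spectral projections. Because $\Pi^+(A;\lambda)$ projects onto the span of eigenvectors of $A^+$ with eigenvalues in $(0,\lambda)$, and by the decomposition above such eigenvectors lie in $\bigoplus_j V_j$ and split into direct sums of eigenvectors of the $A_j^+|_{V_j}$ with eigenvalues in $(0,\lambda)$, we obtain
\[
\operatorname{ran}\bigl(\Pi^+(A;\lambda)\bigr)=\bigoplus_{j=1}^p \operatorname{ran}\bigl(\Pi^+(A_j;\lambda)\bigr),
\]
with the direct sum on the right being orthogonal (since $\operatorname{ran}(\Pi^+(A_j;\lambda))\subset V_j$ and the $V_j$ are mutually orthogonal). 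The orthogonal projection onto an orthogonal direct sum equals the sum of the projections onto the summands, yielding \eqref{linear algebra theorem equation 3}.

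The only real subtlety, and thus the step deserving the most care, is the passage from the joint-diagonalization observation $\mu_j\mu_l=0$ to the global claim $V_j\perp V_l$ and $V_j\subset\ker A_l^+$; everything else is a routine rearrangement once one has recognised that $A^+$ is an orthogonal direct sum of the pieces $A_j^+|_{V_j}$. No non-trivial functional analysis is required beyond the finite-dimensional spectral theorem, and the argument extends painlessly to the unbounded self-adjoint setting provided one uses the spectral theorem and is slightly careful with domains.
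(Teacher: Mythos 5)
Your proof is correct and takes essentially the same approach as the paper's: observe that \eqref{linear algebra theorem equation 2} forces the $A_j^+$ to commute, simultaneously diagonalize them, use the orthogonality hypothesis to conclude that at most one $A_j^+$ is non-zero on each joint eigenvector, and then invoke \eqref{linear algebra theorem equation 1} to match positive eigenvalues of $A$ bijectively with positive eigenvalues of the $A_j$. The paper phrases this in terms of the positions of positive diagonal entries in a simultaneous matrix representation, while you phrase it in terms of the mutually orthogonal ranges $V_j=\operatorname{ran}(A_j^+)$; these are the same argument, with your version spelling out the details the paper leaves implicit.
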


\begin{proof}
The self-adjoint operators $A_j^+$, $j=1,\dots,p$, commute,
hence one can choose a basis which simultaneously diagonalizes them
\cite[Theorem 2.3.3]{Horn}.
The diagonal entries in the matrix representations of the $A_j^+$s
are either zeros or positive numbers, and formula \eqref{linear algebra theorem equation 2}
tells us that for different $j$ the positive elements in the matrix representations of the $A_j^+$s
are in different positions. This immediately implies \eqref{linear algebra theorem equation 3}.
\end{proof}

Let us now proceed to the infinite-dimensional setting.
In what follows $H$ is a separable complex Hilbert space.
We will be dealing with self-adjoint operators which are not necessarily bounded
and this leads to a number of difficulties.
Indeed, generalising Theorem \ref{linear algebra theorem}
to infinite-dimensional spaces turns out to be a delicate matter.

Let us introduce the following definitions.

\begin{definition}
\label{definition of invariant subspace}
Let $A:D\to H$ be a self-adjoint operator and $V\subseteq D$ be a vector subspace.
We say that $V$ is an \emph{invariant subspace} of the operator $A$ if $A(V)\subseteq V$.
\end{definition}

\begin{definition}
\label{definition of invariant subspace proper 1}
We say that an invariant subspace $V$ of the self-adjoint operator $A$ is \emph{proper} 
if, for some $\lambda$ in the resolvent set $\rho(A)$, the map $A-\lambda\operatorname{Id}:V\to V$
is surjective, and, hence, bijective.
\end{definition}

The above definition can be equivalently recast as follows.

\begin{definition}
\label{definition of invariant subspace proper 2}
We say that an invariant subspace $V$ of the self-adjoint operator $A$ is \emph{proper} 
if, for some $\lambda\in\rho(A)$, $V$ is an invariant subspace of the resolvent $(A-\lambda\operatorname{Id})^{-1}$.
\end{definition}

\begin{example}
\label{example with proper and improper invariant subspaces}

\

\begin{enumerate}[(a)]
\item
Any finite-dimensional invariant subspace is proper.
\item
Let $H=L^2(M)$ and
let $A\in\Psi^s$, $s\in\mathbb{R}$, $s>0$, be an elliptic self-adjoint operator
(see Section \ref{Statement of the problem} for notation).
Then $C^\infty(M)$ is a proper invariant subspace.
\item
Let $H=l^2$, the Hilbert space of square summable sequences $x=(x_1,x_2,\dots)$.
Let $S:(x_1,x_2,\dots)\mapsto(0,x_1,x_2,\dots)$ be the right shift operator.
Put $A:=S+S^*$ and let $V=c_{00}$, the vector subspace of sequences that are
eventually zero. Then $V$ is an invariant subspace of the operator $A$ but it is not proper.
Indeed, take $y=(1,0,0,\dots)\in V$. It is easy to see that for any $\lambda\in\mathbb{C}$ there
does not exist an $x\in V$ satisfying $(A-\lambda\operatorname{Id})x=y$.
\end{enumerate}
\end{example}

The following theorem gives sufficient conditions for the simultaneous diagonalizability of
a family of unbounded commuting self-adjoint operators.

\begin{theorem}
\label{diagonalization theorem for family of unbounded operators}
Let $H$ be an infinite-dimensional separable complex Hilbert space and let $A_j$, $j=1,\dots,p$,
be self-adjoint operators
with discrete spectra,
which admit a common proper invariant subspace $V$ dense in~$H$.
If
\begin{equation}
\label{commutation on V}
[A_j,A_l]=0\quad\text{on}\quad V,\qquad j,l=1,\dots,p,
\end{equation}
then there exists an orthonormal basis $\{u_k\}$ such that
each basis element $u_k$  is an eigenvector of
$A_j$ for every $j=1,\dots,p$.
\end{theorem}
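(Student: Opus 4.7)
My plan is to recast the theorem as the simultaneous diagonalization of a family of commuting compact normal bounded operators, namely the resolvents of the $A_j$'s, and then to invoke the standard spectral theorem for such families.

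For each $j=1,\dots,p$, properness of $V$ supplies a $\lambda_j\in\rho(A_j)$ for which the resolvent $R_j:=(A_j-\lambda_j\operatorname{Id})^{-1}$ satisfies $R_j(V)\subseteq V$. Each $R_j$ is a bounded normal operator on $H$ (self-adjoint when $\lambda_j$ is real); moreover, $R_j$ is compact because $A_j$ has discrete spectrum, so the spectrum of $R_j$ consists of the nonzero numbers $(\alpha-\lambda_j)^{-1}$, $\alpha\in\sigma(A_j)$, accumulating only at $0$.

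The heart of the argument is to prove that $[R_j,R_l]=0$ on $H$. This I would do via the intermediate identity
\begin{equation*}
R_j\,A_l\,v=A_l\,R_j\,v\qquad\text{for every }v\in V.
\end{equation*}
Both sides are well-defined and lie in $V$, since $R_j v\in V$ by properness and $A_l v\in V$ by invariance, so $A_l R_j v\in A_l(V)\subseteq V\subseteq D(A_j)$. Applying $A_j-\lambda_j\operatorname{Id}$ to $A_l R_j v$ and using the hypothesis $[A_j,A_l]=0$ on $V$ (valid because $R_j v\in V$) gives
\begin{equation*}
(A_j-\lambda_j\operatorname{Id})(A_l R_j v)=A_l(A_j-\lambda_j\operatorname{Id})R_j v=A_l v,
\end{equation*}
and acting by $R_j$ yields the claimed identity. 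Substituting $R_l v\in V$ in place of $v$ and using $(A_l-\lambda_l\operatorname{Id})R_l v = v$ then gives $(A_l-\lambda_l\operatorname{Id})R_j R_l v = R_j v$, i.e.\ $R_j R_l v=R_l R_j v$, for every $v\in V$. Density of $V$ in $H$ together with boundedness of $R_j R_l$ and $R_l R_j$ extends this commutation to all of $H$.

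The commuting compact normal operators $\{R_j\}_{j=1}^p$ can then be simultaneously diagonalized by the classical spectral theorem: one successively refines the finite-dimensional nonzero joint spectral subspaces to produce an orthonormal basis $\{u_k\}$ of $H$ with $R_j u_k=\mu_k^{(j)}\,u_k$ for all $j$ and $k$. Since $R_j$ is injective, each $\mu_k^{(j)}$ is nonzero, so $u_k$ lies in the range of $R_j$, which is $D(A_j)$, and $A_j u_k=\bigl((\mu_k^{(j)})^{-1}+\lambda_j\bigr)u_k$; hence $\{u_k\}$ is the sought common eigenbasis. The main obstacle I anticipate is the resolvent commutation step: the hypothesis $[A_j,A_l]=0$ is available only on the core $V$, and the hallmark of the argument is that the \emph{properness} of $V$ (i.e.\ $R_j(V)\subseteq V$) is exactly the ingredient that permits propagating the commutation from the unbounded operators on $V$ to the bounded, everywhere-defined resolvents on all of $H$.
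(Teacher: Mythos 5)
Your proof is correct and follows the same overall strategy as the paper's: establish that the resolvents $R_j=(A_j-\lambda_j\operatorname{Id})^{-1}$ commute on $V$, extend this to all of $H$ by density and boundedness, and then simultaneously diagonalize. The two arguments diverge only in the endgame. The paper passes from commuting resolvents to the statement that the $A_j$ \emph{strongly commute} in the sense of Schm\"udgen and then quotes a theorem from that monograph to produce the common eigenbasis. You instead observe that the hypothesis of discrete spectrum makes each $R_j$ a compact injective normal operator, so the commuting family $\{R_j\}$ can be simultaneously diagonalized by the elementary argument of iteratively splitting finite-dimensional eigenspaces, after which the common eigenvectors of the resolvents are recovered as eigenvectors of the $A_j$. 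Your route is more self-contained and explicit (it avoids the reference to strong commutativity entirely), at the cost of leaning a bit more directly on the specific interpretation of ``discrete spectrum'' as ``compact resolvent,'' which is in any case the interpretation the paper intends. You also spell out the computation behind the resolvent-commutation step, which the paper merely asserts; the key observation — that properness of $V$ makes $R_j v\in V$, so the unbounded commutator hypothesis can be applied at that point — is exactly right and is what makes the whole argument work.
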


\begin{proof}
Consider the pair of operators $A_j$ and $A_l$ for some $j\ne l$.
Definitions~\ref{definition of invariant subspace}, \ref{definition of invariant subspace proper 1}
and formula \eqref{commutation on V} imply that there exist $\lambda\in\rho(A_j)$ and
$\mu\in\rho(A_l)$ such that the resolvents
$(A_j-\lambda\operatorname{Id})^{-1}$ and $(A_l-\mu\operatorname{Id})^{-1}$
commute on $V$.
Resolvents are bounded operators and $V$ is dense in $H$, hence
$[(A_j-\lambda\operatorname{Id})^{-1},(A_l-\mu\operatorname{Id})^{-1}]=0$ on $H$.
This, in turn, implies that the operators $A_j$ and $A_l$ \emph{strongly commute}
in the sense of \cite[Definition~5.2]{schmudgen} in view of \cite[Proposition~5.27]{schmudgen}.
The result now follows from \cite[Theorem~5.21]{schmudgen}.
\end{proof}

Arguing along the lines of the proof of Theorem~\ref{linear algebra theorem},
we see that Theorem~\ref{diagonalization theorem for family of unbounded operators}
immediately implies the following.

\begin{theorem}
\label{linear algebra theorem infinite-dimensional}
Let $H$ be an infinite-dimensional separable complex Hilbert space and let
$A$ and $A_j$, $j=1,\dots,p$,
be self-adjoint operators with the same domain $D$.
Suppose that the operators $A_j$, $j=1,\dots,p$,
have discrete spectra and admit a common proper invariant subspace $V$ dense in~$H$.
Furthermore suppose that conditions
\eqref{linear algebra theorem equation 1}
and
\eqref{commutation on V} are fulfilled as well as
\begin{equation}
\label{composition of A plus}
\langle A_j^+(\,\cdot\,),A_l^+(\,\cdot\,)\rangle=0\quad\text{on}\quad D\times D,\qquad j,l=1,\dots,p,\quad j\ne l.
\end{equation}
Then we have \eqref{linear algebra theorem equation 3}.
\end{theorem}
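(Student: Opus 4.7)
The plan is to deduce the identity from Theorem~\ref{diagonalization theorem for family of unbounded operators} applied to the family $\{A_j\}_{j=1}^p$, mimicking the finite-dimensional bookkeeping used in Theorem~\ref{linear algebra theorem}. First I would verify that the hypotheses of Theorem~\ref{diagonalization theorem for family of unbounded operators} are satisfied: by assumption the operators $A_j$ have discrete spectra, commute on $V$, and share $V$ as a common dense proper invariant subspace. Hence there is an orthonormal basis $\{u_k\}_{k\in\mathbb{N}}$ of $H$ such that every $u_k$ is a simultaneous eigenvector of every $A_j$, with real eigenvalues $\lambda_k^{(j)}$. Since $A$ and each $A_j$ share the same domain $D$, and every eigenvector of $A_j$ automatically belongs to $D$, we have $u_k\in D$. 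By the spectral calculus, $A_j^+ u_k=(\lambda_k^{(j)})^+\,u_k$, where $(\cdot)^+:=\max(\cdot,0)$.

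Next I would evaluate the orthogonality relation \eqref{composition of A plus} on pairs $u_k,u_k$: for $j\ne l$,
\[
0=\langle A_j^+ u_k,A_l^+ u_k\rangle=(\lambda_k^{(j)})^+\,(\lambda_k^{(l)})^+.
\]
Consequently, for each $k$ at most one index $j$ yields a strictly positive eigenvalue; denote it by $j(k)$ if it exists, and set $(\lambda_k^{(j(k))})^+:=0$ otherwise. Applying now \eqref{linear algebra theorem equation 1} to $u_k\in D$ gives
\[
A^+ u_k=\sum_{j=1}^p(\lambda_k^{(j)})^+ u_k=(\lambda_k^{(j(k))})^+\,u_k,
\]
so the single orthonormal basis $\{u_k\}$ simultaneously diagonalizes $A^+$ and every $A_j^+$.

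Finally, the projections $\Pi^+(A;\lambda)$ and $\Pi^+(A_j;\lambda)$ can be described, via this common eigenbasis, as the orthogonal projections onto the closed spans of those $u_k$ whose corresponding eigenvalue of $A^+$, respectively $A_j^+$, lies in $(0,\lambda)$. Therefore $\Pi^+(A;\lambda)u_k=u_k$ precisely when $0<\lambda_k^{(j(k))}<\lambda$; and for the same $k$ one has $\Pi^+(A_{j(k)};\lambda)u_k=u_k$ while $\Pi^+(A_j;\lambda)u_k=0$ for every $j\ne j(k)$, since in the latter case $\lambda_k^{(j)}\le 0$. The two sides of \eqref{linear algebra theorem equation 3} therefore agree on every element of the basis $\{u_k\}$, hence on all of $H$.

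I expect the only genuinely delicate step to be the first one, namely producing the common eigenbasis in the unbounded setting; but this is already packaged in Theorem~\ref{diagonalization theorem for family of unbounded operators}, which reduces the problem to the strong commutativity of the $A_j$ via the commuting resolvents on $V$. After that, the remainder is essentially finite-dimensional linear algebra carried out basis-vector by basis-vector, and the role of the domain hypothesis ``$A$ and the $A_j$ share the same $D$'' is simply to ensure that \eqref{linear algebra theorem equation 1} and \eqref{composition of A plus} may be applied to the eigenvectors $u_k$ directly.
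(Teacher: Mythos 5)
Your proof is correct and follows the same route the paper indicates: apply Theorem~\ref{diagonalization theorem for family of unbounded operators} to obtain a common orthonormal eigenbasis, then repeat the finite-dimensional bookkeeping of Theorem~\ref{linear algebra theorem} (using \eqref{composition of A plus} in place of \eqref{linear algebra theorem equation 2}) basis-vector by basis-vector. The paper gives only a one-line indication of this argument, and your write-up supplies the details correctly.
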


Of course, taking the trace in \eqref{linear algebra theorem equation 3} one obtains an
analogous result for the counting functions:
%\begin{equation*}
%\label{linear algebra theorem equation 3 trace}
$N^+(A;\lambda)
=
\sum_{j=1}^p
N^+(A_j;\lambda)$.
%\end{equation*}
Here by $N^+(\,\cdot\,;\lambda)$ we denote the number of eigenvalues, with account of multiplicity,
greater than zero and less than $\lambda$.

Examination of
Theorem~\ref{linear algebra theorem infinite-dimensional}
and
Example~\ref{example with proper and improper invariant subspaces}(b) leads to the following corollary.

\begin{corollary}
\label{main corollary in appendix}
Let $A$ and $A_j$, $j=1,\dots,p$, be elliptic self-adjoint operators
from the class $\Psi^s$, $s\in\mathbb{R}$, $s>0$.
Suppose that conditions
\eqref{linear algebra theorem equation 1}
and
\eqref{linear algebra theorem equation 2}
are fulfilled and that the  $A_j$, $j=1,\dots,p$, commute.
Then we have \eqref{linear algebra theorem equation 3}.
\end{corollary}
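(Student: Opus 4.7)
The strategy is to derive the corollary as a direct specialization of Theorem~\ref{linear algebra theorem infinite-dimensional} to the Hilbert space $H=L^2(M)$, viewing $A$ and the $A_j$ as unbounded self-adjoint operators with common domain $D=H^s(M)$.

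First I would identify the common proper invariant subspace required by the theorem. The natural candidate is $V:=C^\infty(M)$, which is dense in $L^2(M)$. By classical elliptic regularity on the closed manifold $M$, every elliptic operator of class $\Psi^s$ maps $C^\infty(M)$ into itself, so $V$ is a common invariant subspace of $A$ and of all the $A_j$; it is proper in the sense of Definition~\ref{definition of invariant subspace proper 1} by Example~\ref{example with proper and improper invariant subspaces}(b). Discreteness of the spectra of the $A_j$ (and of $A$) is then automatic from ellipticity on the closed manifold.

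Next I would verify the algebraic hypotheses of Theorem~\ref{linear algebra theorem infinite-dimensional}. Condition \eqref{commutation on V} on $V$ is just the assumed commutativity of the $A_j$, restricted to the dense invariant subspace $C^\infty(M)$. To obtain \eqref{composition of A plus} from \eqref{linear algebra theorem equation 2}, I would invoke self-adjointness of each nonnegative part $A_l^+$, which follows from applying the Borel functional calculus of $A_l$ to the real-valued function $\lambda\mapsto\lambda\,\theta(\lambda)$. For $u,v\in D$ and $j\ne l$ this yields
\begin{equation*}
\langle A_j^+u, A_l^+v\rangle=\langle u, A_j^+A_l^+v\rangle=0,
\end{equation*}
the last equality being exactly hypothesis \eqref{linear algebra theorem equation 2}. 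Since \eqref{linear algebra theorem equation 1} is assumed in the corollary, Theorem~\ref{linear algebra theorem infinite-dimensional} applies and delivers \eqref{linear algebra theorem equation 3}.

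The main subtlety I anticipate is the precise functional-analytic interpretation of the commutativity of the unbounded operators $A_j$ and the compatibility of domains when composing $A_j^+$ and $A_l^+$; but these are resolved by the density and invariance of $C^\infty(M)$, combined with the resolvent argument embedded in the proof of Theorem~\ref{diagonalization theorem for family of unbounded operators}, which upgrades commutation on the dense invariant subspace to strong commutativity in the sense of \cite[Definition~5.2]{schmudgen}. Beyond this bookkeeping of domains, no genuine obstacle is expected, and the corollary reduces to a straightforward verification of hypotheses.
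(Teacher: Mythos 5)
Your proposal is correct and follows essentially the same route as the paper: the paper derives the corollary from Theorem~\ref{linear algebra theorem infinite-dimensional} by taking $V=C^\infty(M)$, noting that elliptic self-adjoint pseudodifferential operators and their resolvents preserve $C^\infty(M)$ and that pseudodifferential operators form an algebra (which is what makes the compositions $A_j^+A_l^+$ and the passage from \eqref{linear algebra theorem equation 2} to \eqref{composition of A plus} sensible). One small caveat: as written, the line $\langle A_j^+u,A_l^+v\rangle=\langle u,A_j^+A_l^+v\rangle$ for general $u,v\in D=H^s(M)$ presupposes $A_l^+v\in D(A_j^+)$, which is not automatic; this should first be established for $u,v\in C^\infty(M)$ (where $\theta(A_l)$, hence $A_l^+$, is a pseudodifferential operator preserving $C^\infty(M)$) and then extended to $D\times D$ by density and continuity of $A_j^+,A_l^+:H^s\to L^2$ — a refinement you flag in your final paragraph, so this is a matter of phrasing rather than a genuine gap.
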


For the sake of clarity, let us point out that the proper invariant subspace underpinning the above
corollary is $C^\infty(M)$, because
\begin{itemize}
\item
elliptic self-adjoint pseudodifferential operators and their resolvents map $C^\infty(M)$ to $C^\infty(M)$ and
\item
pseudodifferential operators form an algebra.
\end{itemize}

\begin{remark}
\label{compelling remark}
Corollary~\ref{main corollary in appendix}
connects with the arguments presented in Section~\ref{Main results}
in that it provided strong motivation for our original conjecture on the structure of the
spectrum of the operator~$A$, compare formulae
\eqref{linear algebra theorem equation 1},
\eqref{linear algebra theorem equation 2}
and $[A_j,A_l]=0$
with
\eqref{nonnegative part of A equation 1},
\eqref{nonnegative part of A equation 2}
and
\eqref{Aj almost commute}.
Note, however, that in the main text the three conditions
are not satisfied precisely but only modulo $\Psi^{-\infty}$.
This calls for a more delicate spectral theoretic analysis
than that given in this appendix
and is ultimately responsible
for the appearance of remainders in our main results,
Theorems~\ref{Main results theorem 4}--\ref{Main results theorem 6}.
\end{remark}

\section{Weyl asymptotics for elliptic systems}
\label{Weyl asymptotics for elliptic systems}

In this appendix we provide, for the sake of completeness, a short proof of the Weyl law (one-term asymptotics with rough remainder estimate) for elliptic systems of arbitrary positive order. Though obtaining this result does not pose significant challenges, we were unable to find a rigorous proof for it in the literature.

Note that
\begin{enumerate}[(a)]
\itemsep0em
\item
we are dealing with a \emph{system} as opposed to a scalar operator,

\item
we allow the order of the operator to be \emph{any positive real number} and

\item
the operator is \emph{not necessarily semi-bounded.}

\end{enumerate}

Spectral theory for elliptic systems has a long and troubled history, see~\cite[Section~11]{CDV} for a review. Two-term asymptotic formulae for the counting function of a first order system ($s=1$) were recently obtained by Chervova, Downes and
Vassiliev \cite{CDV}, see also \cite{AFV,ASV}. Some results are available for particular special cases, e.g.~two-term asymptotics for nonnegative (pseudo)differential operators of even order are given in \cite{dima_old,safarov}, but we are unaware of general results.

Let $A\in\Psi^s$, $s>0$, be an operator as in Section~\ref{Statement of the problem}
and let
\begin{equation}
\label{Weyl asymptotics for elliptic systems counting function definition}
N^+(\lambda):=
\begin{cases}
0 & \text{for}\ \lambda\le 0,\\
\sum_{k\,:\,0<\lambda_k<\lambda}1 &\text{for}\ \lambda> 0\\
\end{cases}
\end{equation}
be its positive counting function.

\begin{theorem}
\label{Weyl asymptotics for elliptic systems theorem}
We have
\begin{equation}
\label{Weyl asymptotics for elliptic systems theorem equation 1}
N^+(\lambda)=
b\lambda^{d/s}+o(\lambda^{d/s})
\quad\text{as}\quad\lambda\to+\infty,
\end{equation}
where
\begin{equation}
\label{Weyl asymptotics for elliptic systems theorem equation 2}
b=
\frac{1}{(2\pi)^d}\,\sum_{j=1}^{m^+}
\ \int\limits_{h^{(j)}(x,\xi)<1}\operatorname{dVol}_{T^*M}.
\end{equation}
\end{theorem}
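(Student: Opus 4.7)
The plan is to reduce the problem to the classical one-term Weyl law for positive self-adjoint elliptic pseudodifferential systems of positive order---a result derivable, for instance, from the short-time asymptotics of the heat trace combined with Karamata's Tauberian theorem, and which we take for granted. The difficulty is that $A$ itself is not semi-bounded. We shall bypass this using the pseudodifferential projections $P_j$ from Theorem~\ref{theorem results from part 1} to build an auxiliary positive elliptic operator and relate its Weyl asymptotic to $N^+(\lambda)$.

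Concretely, set $P^\pm := \sum_{\pm j > 0} P_j$ and $\widehat A := A - 2\sum_{l<0} P_l^* A P_l = A(P^+ - P^-) \bmod \Psi^{-\infty}$. A direct symbolic computation gives $\widehat A_\mathrm{prin} = |A_\mathrm{prin}| = \sum_{j > 0} h^{(j)} P^{(j)} + \sum_{j < 0} |h^{(j)}| P^{(j)}$, so $\widehat A$ is positive elliptic of order $s$ modulo a finite-rank self-adjoint correction absorbing the $\Psi^{-\infty}$ defects. Crucially, $\widehat A$ acts as $A$ on $P^+ L^2(M)$ and as $-A$ on $P^- L^2(M)$, both modulo smoothing, so that its positive spectrum ``morally equals'' $\{|\lambda_k|\}$ with the contributions split by subspace.

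Applying the classical Weyl law to $\widehat A$ produces
\[
N(\widehat A, \lambda) = (2\pi)^{-d}\lambda^{d/s}\Bigl[\sum_{j=1}^{m^+}\int_{h^{(j)}<1} d\mathrm{Vol}_{T^*M} + \sum_{j=-m^-}^{-1}\int_{|h^{(j)}|<1} d\mathrm{Vol}_{T^*M}\Bigr] + o(\lambda^{d/s}),
\]
in which the two sums in the bracket arise naturally from the two groups of principal symbol eigenvalues, associated with $P^+$ and $P^-$ respectively. The claim of the theorem would then follow from the spectral splitting $N(\widehat A, \lambda) = N^+(\lambda) + N^-(\lambda) + o(\lambda^{d/s})$ combined with the matching of each Weyl summand to the counting function on its subspace, where $N^-(\lambda) := \#\{\lambda_k : -\lambda < \lambda_k < 0\}$.

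The main obstacle is proving this splitting rigorously without invoking Weyl's law itself, which would be circular with Theorems~\ref{Main results theorem 4}--\ref{Main results theorem 6} in the main text. My plan here is to extract from Theorem~\ref{theorem results from part 1}(b) the \emph{weak} polynomial eigenvector decay estimate $\|P_l u_k\|_{L^2} \le C|\lambda_k|^{-1/2}$, valid for any normalised eigenfunction $u_k$ of $A$ whose eigenvalue $\lambda_k$ has sign opposite to that of~$l$. This estimate follows from combining $\langle u_k, P_l^* A P_l u_k\rangle \le C\|u_k\|_{L^2}^2$ (bounded $L^2$-norm of the smoothing remainder in Theorem~\ref{theorem results from part 1}(b)) with $A P_l u_k = \lambda_k P_l u_k + O_{L^2}(1)$, and crucially uses no spectral asymptotic. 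Feeding this bound into a standard min-max comparison between the positive spectrum of $A$ and the spectrum of $\widehat A$ restricted to $P^+ L^2(M)$ (and analogously on $P^- L^2(M)$) introduces only an $O(\sqrt\lambda)$ eigenvalue shift, hence a correction of order $\lambda^{d/s - 1/2} = o(\lambda^{d/s})$ to the counting function, which is harmless. Using this weaker polynomial estimate rather than the $O(k^{-\infty})$ bound of Proposition~\ref{proposition on action of projections on eigenfunctions} (which itself depends on Weyl's law) is precisely what breaks the potential circularity.
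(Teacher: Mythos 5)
Your route is genuinely different from the paper's. The paper's proof is very short: it observes that both $|A|$ and $\theta(A)$ are classical pseudodifferential operators, so the mollified positive trace $f(t)=\operatorname{Tr}(\theta(A)e^{-t|A|})$ admits the standard short-time heat-trace expansion by Seeley's technique, and Karamata's Tauberian theorem then gives \eqref{Weyl asymptotics for elliptic systems theorem equation 1} directly. No auxiliary operators, no projections, and no eigenvector estimates are involved.

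Your sketch has two genuine gaps. First, the proposed ``standard min-max comparison'' does not actually close with the weak estimate $\|P_lu_k\|_{L^2}=O(|\lambda_k|^{-1/2})$. That estimate gives $\|\widehat A u_k-\lambda_k u_k\|_{L^2}=O(\sqrt{\lambda_k})$, which is an $O(\sqrt\lambda)$ error \emph{per approximate eigenfunction}, not an $O(\sqrt\lambda)$ shift of the counting function. Converting the former into the latter requires a localisation argument of the type of Lemma~\ref{lemma about bound on number of ev in interval}, which introduces a factor $\sqrt r$ where $r$ is the number of approximate eigenfunctions in the window; with $r\sim\lambda^{d/s}$ and per-eigenfunction error $\sim\sqrt\lambda$, the resulting uncertainty $\sqrt r\cdot\sqrt\lambda\sim\lambda^{d/2s+1/2}$ exceeds $\lambda$ whenever $d\ge s$, so the argument degenerates. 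Working with Rayleigh quotients against the projected vectors $\Pi^+u_k/\|\Pi^+u_k\|$ improves the per-vector error to $O(1)$, but then the Gram matrix of these test vectors is close to the identity only when $\sum_k\lambda_k^{-1}<\infty$, i.e.\ $s>d$, and so one runs into the same dimensional obstruction. The $O(k^{-\infty})$ estimates in the paper's Section~\ref{Spectral analysis: partitioning the spectrum} are precisely what makes such comparisons work, but those use Theorem~\ref{Weyl asymptotics for elliptic systems theorem} itself, which is why you cannot import them here. Second, even granting the splitting $N(\widehat A,\lambda)=N^+(\lambda)+N^-(\lambda)+o(\lambda^{d/s})$, the final ``matching'' step --- attributing the summand $\sum_{j>0}$ to $N^+$ and $\sum_{j<0}$ to $N^-$ --- does not follow from that single identity and is not argued; it needs a separate device (e.g.\ rescaling one of the two blocks and tracking the $\varepsilon$-dependence), and moreover ``$\widehat A$ restricted to $P^+L^2(M)$'' is not a self-adjoint operator since $P^+$ is only a projection modulo $\Psi^{-\infty}$, so the Weyl coefficient you ascribe to it is not yet defined.

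One remark in your favour: the observation that $\theta(A)=P^++\Psi^{-\infty}$ (by the uniqueness part of Theorem~\ref{theorem results from part 1}(a) applied to the two-block split) gives $\widehat A=|A|+\Psi^{-\infty}$, so the two operators differ by a \emph{bounded} perturbation and the splitting step can be carried out cleanly via the variational inequality $N(|A|,\lambda-C)\le N(\widehat A,\lambda)\le N(|A|,\lambda+C)$, with no eigenvector estimates at all. But this observation effectively reintroduces $\theta(A)$ and $|A|$ as pseudodifferential operators, which is exactly the ingredient the paper uses, and the matching step is still missing.
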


\begin{proof}
Consider the function
\begin{equation*}
\label{Weyl asymptotics for elliptic systems theorem equation 1 proof}
f:(0,+\infty)\to[0,+\infty),
\qquad
f(t):=
\operatorname{Tr}
\bigl(
\theta(A)\,e^{-t|A|}
\bigr)
=\int_{-\infty}^{+\infty}e^{-t\lambda}\,dN^+(\lambda)\,.
\end{equation*}
The operators $|A|$ and $\theta(A)$ are  pseudodifferential operators
%which admit a detailed description in accordance with \cite[Theorems 2.6 and 2.7]{part1}
and this allows us to apply to the operator $\,\theta(A)\,e^{-t|A|}\,$ the standard technique from
\cite{seeley}, giving us the asymptotic formula
\begin{equation}
\label{Weyl asymptotics for elliptic systems theorem equation 2 proof}
f(t)=
b\,\Gamma\left(\frac ds+1\right)
t^{-d/s}+o(t^{-d/s})\quad\text{as}\quad t\to 0^+,
\end{equation}
where $\Gamma$ is the Gamma function.
Karamata's Tauberian theorem \cite[Problem 14.2]{shubin}
tells us that
\eqref{Weyl asymptotics for elliptic systems theorem equation 2 proof}
implies
\eqref{Weyl asymptotics for elliptic systems theorem equation 1}.
\end{proof}

\begin{remark}
Of course, for $s=1$ the coefficient $b$ appearing in Theorem~\ref{Weyl asymptotics for elliptic systems theorem}
is related to the coefficient $a_{d-1}(x)$ appearing in
formula~\eqref{expansion for mollified derivative of counting function}
as
\begin{equation*}
\label{Weyl asymptotics for elliptic systems theorem relation between a and b}
b=
\frac{1}d
\ \int\limits_M a_{d-1}(x)\,dx\,.
\end{equation*}
\end{remark}

\end{appendices}

\end{document}